\documentclass[12pt]{article}
\usepackage[utf8]{inputenc}
\usepackage[english]{babel}
\usepackage{comment}
\usepackage{tikz}
\usepackage[dvipsnames]{xcolor}
\usepackage{arxiv}

\definecolor{myorange}{RGB}{255, 176, 1}
\definecolor{mygreen}{RGB}{55, 184, 78}
\definecolor{mycyan}{RGB}{51, 255, 255}

\usepackage{amsfonts}
\usepackage{amsthm,amsmath,amssymb}

\usepackage{authblk}

\usepackage{amsthm}

\newtheorem{theorem}{Theorem}[section]
\newtheorem{lemma}[theorem]{Lemma}

\newtheorem{corollary}[theorem]{Corollary}

\newtheorem{definition}[theorem]{Definition}

\numberwithin{equation}{section}

\date{}

\usepackage{graphicx}
\usepackage{caption}
\usepackage{subcaption}

\begin{document}

\title{Lipschitz Stability in the Simultaneous Determination of Polygonal Inclusions and Constant Conductivities}
\author[1]{Dai Tianrui}

\affil[1]{Department of Mathematics and Computer Science, Università degli Studi di Firenze}

\maketitle

\begin{abstract}
This work establishes a Lipschitz stability result for identifying unknown polygonal inclusions along with their unknown constant conductivity values, given boundary measurements encoded in the Dirichlet-to-Neumann map. We also obtain Lipschitz estimates for the Hausdorff distance between inclusion boundaries and for the difference of the conductivity values.
\end{abstract}

\section{Introduction}
   We consider the following conductivity problem with Dirichlet boundary condition:
   \begin{equation}\label{original_model}
       \begin{cases}
           -\text{div} \left(\gamma \nabla u\right)=0 \text{ in } \Omega\\
           u=f \text{ on }\partial \Omega
       \end{cases},
   \end{equation}
where the solution $u$ represents an electric potential in a dielectric $\Omega$. The function $f$ represents the boundary potential. Assume there is an inclusion $D\subset\Omega$ of other conducting material such that the spatial conductivity distribution $\gamma$ could be written as  
\begin{equation}
    \gamma=\begin{cases}
        k \text{ in } D\\
        1 \text{ in } \Omega\setminus\overline{D}
    \end{cases}.
\end{equation}
The inverse problem that we are interested is recovering the conductivity $\gamma$ using boundary measurements. Such inverse problems arise naturally in various applications across physics, engineering, and medical imaging, particularly those involving heterogeneous media.  

The related study of this kind of inverse problem with this specific form of parameters has a long story. In the degenerate case, where the unknown inclusion D is taken to be insulating, i.e., when $k=0$, the Cauchy data of $u$ on $\partial\Omega$ uniquely determine the inclusion has been shown by \cite{beretta1998stable}. Furthermore, when the conductivity in the inclusion is a known nonzero constant different from one, \cite{friedman1989uniqueness} showed that a convex polygonal inclusion can be uniquely determined by one Cauchy measurement. In 1995, \cite{seo1995uniqueness} proved that the convexity assumption on the polygon can be dropped provided two appropriate probing currents. 

Meanwhile, the stability study of this kind of inverse problem also has been studied for a long time. Using the prior information of such a special form of conductivity coefficients, compared to the typical logarithmic stability result, it usually presents a better stability behavior. In \cite{alessandrini2005lipschitz}, the authors showed that if all Cauchy pairs are known, the values of a piecewise constant conductivity $\gamma$ respect to a known partition of $\Omega$ Lipschitz continuously depend on these data. Later, finite number of Cauchy pairs is sufficient for this result are shown in \cite{harrach2019uniqueness},\cite{alberti2019calderon} and \cite{alberti2022infinite}. The inverse problem with the unknown position of vertices for the polygon inclusion $D$ with known conductivity constant $k$ is treated in \cite{BerFra20}. The author showed that such problem has a Lipschitz stability in terms of the Hausdorff distance between the boundaries of two admissible inclusions. Inspired by their work, we work on the case with a relaxation of the condition that the conductivity constant $k$ is known.

The structure of this paper is organized as follows: 
\begin{itemize}
    \item In section 2, we introduce the basic assumptions concerning the interested domain $\Omega$, the unknown inclusion $D$ (which will be denoted as $P$ in the sequel) and the conductivity constant $k$. We also state our main stability result that the conductivity $\gamma$ exhibits Lipschitz stability with respect to the corresponding Dirichlet-to-Neumann (DtN) map.
    \item In section 3, we establish a geometric result, demonstrating that the DtN map uniquely determines the number of vertices of the unknown polygonal inclusion.
    \item In section 4, utilizing the geometric findings from Section 3, we show that the difference between two conductivities can be explicitly characterized by the variations of the polygon's vertices and the shift in the conductivity constants. Subsequently, we derive the exact formulation of the corresponding Fréchet derivative and establish its upper bound.
    \item In sections 5 and 6, we conclude the paper with the proof of the main stability theorem, which critically relies on establishing a lower bound for the associated Fréchet derivative
\end{itemize}

\section{Assumptions and main result}
    In this section, we introduce some basic assumptions on the target domain $\Omega$, the polygonal inclusion $P$, and the corresponding conductivity $\gamma$. All parameters appearing in the assumptions will be considered as a prior information. Using such prior information, we then present our main Lipschitz stability result.

    The target domain we considered will be denoted as $\Omega$, which is a bounded domain in $\mathbb{R}^{2}$. We assume that there exists some constant $L>0$ such that diam$\left(\Omega\right)\leq L$. 
    In addition, we assume that $\partial\Omega$ is Lipschitz with constants $\left(r_{0},K_{0}\right)$, that is, for every point $P\in\partial\Omega$ there exists a coordinate system with $P=0$ in which, writing $Q:=\left[-r_{0},r_{0}\right]\times\left[-K_{0}r_{0},K_{0}r_{0}\right]$,
    \begin{equation*}
        \Omega\cap Q=\left\{\left(x_{1},x_{2}\right)\in Q:\ x_{2}>\phi\left(x_{1}\right)\right\},
    \end{equation*}
    for a Lipschitz continuous function $\phi$ with Lipschitz constant at most $K_{0}$. In particular $\partial\Omega\cap Q$ is the graph $\left\{\left(x_{1},\phi\left(x_{1}\right)\right):x_{1}\in\left[-r_{0},r_{0}\right]\right\}$.

    We now introduce the set of polygonal inclusions:
    
    \begin{definition}\label{def1}
        Let $N_{0}\in\mathbb{N}$, $0<\alpha_{0}<\frac{\pi}{2}$, $d_{0}>0$, and $\left(r_{0},K_{0}\right)$
        be the same Lipschitz constants for $\partial\Omega$. 
        In $\mathbb{R}^{2}$, a set of polygons $\mathcal{A}$ is called \emph{a nice inclusions set} with
        the prior information $\left(N_{0},\alpha_{0},d_{0},r_{0},K_{0}\right)$, if for any $\mathcal{P}\in\mathcal{A}$, there holds
        \begin{enumerate}
            \item 
                $\mathcal{P\subset}\Omega$ is a closed, simply connected, simple
                polygon; in particular $\mathcal P=\overline{\operatorname{int}\mathcal P}$ and $\partial\mathcal P$ is a finite polygonal line. $\partial\mathcal{P}$ is Lipschitz with constants $\left(r_{0},K_{0}\right)$, meaning that at every point of $\partial\mathcal P$ the graph representation used for $\partial\Omega$ above holds for $\operatorname{int}\mathcal P$ in place of $\Omega$: for a chart rectangle $Q$ centered at the point, $\left(\operatorname{int}\mathcal P\right)\cap Q=\left\{\left(x_{1},x_{2}\right)\in Q:\ x_{2}>\phi\left(x_{1}\right)\right\}$ with $\operatorname{Lip}\phi\le K_{0}$, and $\partial\mathcal P\cap Q=\left\{\left(x_{1},\phi\left(x_{1}\right)\right):x_{1}\in\left[-r_{0},r_{0}\right]\right\}$.
            \item 
                Denote $^{\#}\mathcal{P}$ as the number of vertices of $\mathcal{P}$,
                then $^{\#}\mathcal{P}\leq N_{0}$. Denote $x_{i}$ as the $i$-th
                vertex of $\mathcal{P}$ and denote $\alpha_{i}$ as the angle in $x_{i}$, then $\alpha_{0}\leq\alpha_{i}\leq2\pi-\alpha_{0}$ and $\left|\alpha_{i}-\pi\right|\geq\alpha_{0}$.
            \item  
                Every side of $\mathcal{P}$ has length greater than $d_{0}$ and dist$\left(\mathcal{P},\partial\Omega\right)$
                (the distance between $\mathcal{P}$ and $\partial\Omega$) is greater than $d_{0}$.
        \end{enumerate}
    \end{definition}

    The vertices of every polygon $\mathcal P\in\mathcal A$ are labeled counterclockwise. When two polygons with the same number of vertices are compared, the labels are chosen according to the vertex correspondence of Lemma \ref{same_number_result}, up to a cyclic permutation fixed once and for all.
    
    We now introduce the following conductivity set:

    \begin{definition}\label{def2}
        Let $\lambda_{0}>1$ and $\lambda_{1}>1$, we say $\mathcal{K}$ is \emph{a nice conductivity set} with the prior information $\left(\lambda_{0},\lambda_{1}\right)$, if for any $k\in\mathcal{K}$, there holds $\frac{1}{\lambda_{0}}<k<\lambda_{0}$ and $\lambda_{1}>\left|k-1\right|>\frac{1}{\lambda_{1}}$. 
    \end{definition}

    The coefficients appearing in the Definition $\ref{def1}$ and Definition $\ref{def2}$ will be given as a prior information. The domain $\Omega$ itself is fixed throughout the paper, and the estimates below are carried out on $\Omega$ and on the extension domain $\Omega_{0}$ introduced in Section 5, the fixed ball $B_{2L}\left(x_c\right)$, which is determined by $\Omega$ and $L$; we therefore include $\Omega$ in the prior information. We denote them by:
    \[
        PI:=\left(\Omega,N_{0},\alpha_{0},d_{0},r_{0},K_{0},\lambda_{0},\lambda_{1},L\right)
    \]
    and every constant appearing below is understood to depend only on these data.

    For any polygon $\mathcal{P}\in\mathcal{A}$ and any $k\in\mathcal{K}$, we define a piecewise conductivity in $\Omega$: 
    \[
        \gamma_{k}^{\mathcal{P}} := \chi_{\mathcal{P}}\left(k-1\right)+1,
    \]
    where $\chi_{\mathcal{P}}$ is the characteristic function of the inclusion $\mathcal{P}$.

    The corresponding Dirichlet-to-Neumann map associated with such conductivity $\gamma_{k}^{\mathcal{P}}$ is then defined as 
    \begin{align*}
        \Lambda_{k}^{\mathcal{P}}:H^{1/2}\left(\partial\Omega\right)&\rightarrow H^{-1/2}\left(\partial\Omega\right)\\
        f&\mapsto\Lambda_{k}^{\mathcal{P}}\left(f\right)=\gamma_{k}^{\mathcal{P}}\,\partial_{\nu}u_{f}=\partial_{\nu}u_{f},
    \end{align*}

    where $u_{f}$ is the solution of the following Dirichlet boundary problem:
    \[
        \begin{cases}
            -\text{div}\left(\gamma_{k}^{\mathcal{P}}\nabla u_{f}\right)=0 & \text{in }\Omega\\
            u_{f}=f & \text{on }\partial\Omega,
        \end{cases}
    \]

    here $\nu$ represents the outer unit normal vector on $\partial\Omega$; the equality $\gamma_{k}^{\mathcal{P}}\partial_{\nu}u_{f}=\partial_{\nu}u_{f}$ holds because $\operatorname{dist}\left(\mathcal P,\partial\Omega\right)\ge d_{0}>0$ implies $\gamma_{k}^{\mathcal P}\equiv1$ in a neighborhood of $\partial\Omega$. With this convention the Dirichlet-to-Neumann map $\Lambda_{k}^{\mathcal{P}}$'s operator norm in the space of linear operators space $\mathcal{L}\left(H^{1/2}\left(\partial\Omega\right);H^{-1/2}\left(\partial\Omega\right)\right)$ is defined by 
    
    \[
        \left\Vert \Lambda_{k}^{\mathcal{P}}\right\Vert _{*}:=
        \text{sup}
        \left(
            \frac{\left\Vert \Lambda_{k}^{\mathcal{P}}\left(f\right)\right\Vert _{H^{-1/2}\left(\partial\Omega\right)}}
                {\left\Vert f\right\Vert _{H^{1/2}\left(\partial\Omega\right)}}
            :f\neq0
        \right).
    \]

    The main aim of this paper is to study the Lipschitz stability of the conductivity $\gamma_{k}^{\mathcal{P}}$ corresponding to the Dirichlet-to-Neumann map $\Lambda_{k}^{\mathcal{P}}$. Specifically, we have the following main result:
    
    \begin{theorem}\label{mainthm}
        There exists some constant $C$ only depends on the prior information PI, such that for any $\mathcal{P}_{1},\mathcal{P}_{2}\in\mathcal{A}$ and $k_{1},k_{2}\in\mathcal{K}$, we have
        \begin{equation}
            \left\Vert \gamma_{k_{1}}^{\mathcal{P}_{1}}-\gamma_{k_{2}}^{\mathcal{P}_{2}}\right\Vert _{L^{1}\left(\Omega\right)}\leq C\left\Vert \Lambda_{k_{1}}^{\mathcal{P}_{1}}-\Lambda_{k_{2}}^{\mathcal{P}_{2}}\right\Vert _{*}.
        \end{equation}
    \end{theorem}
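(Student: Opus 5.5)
We follow the scheme of Beretta--Francini (BerFra20), extended by one real parameter for the unknown conductivity. The argument splits into a \emph{local} estimate near the diagonal, obtained from a lower bound on a Fréchet derivative, and a \emph{global} estimate obtained from compactness and uniqueness. First we parametrize. By the geometric result of Section 3 (Lemma \ref{same_number_result}), if $\|\Lambda_{k_1}^{\mathcal P_1}-\Lambda_{k_2}^{\mathcal P_2}\|_*$ is below a threshold $\epsilon_0$ depending only on PI, then $^{\#}\mathcal P_1={}^{\#}\mathcal P_2=N$. Moreover, the vertices can be matched so that $|x_i^1-x_i^2|$ is small. If instead the norm exceeds $\epsilon_0$, the theorem is trivial: $\|\gamma_{k_1}^{\mathcal P_1}-\gamma_{k_2}^{\mathcal P_2}\|_{L^1(\Omega)}\le 2\lambda_0|\Omega|\le C\epsilon_0^{-1}\|\Lambda_{k_1}^{\mathcal P_1}-\Lambda_{k_2}^{\mathcal P_2}\|_*$. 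So we restrict to equal vertex number and identify each pair $(\mathcal P,k)$ with a point $\xi=(x_1,\dots,x_N,k)\in\mathbb R^{2N+1}$.

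Next we interpolate linearly, $\xi(t)=(1-t)\xi^1+t\xi^2$, producing polygons $\mathcal P_t$ and conductivities $k_t$. For $\xi^1,\xi^2$ close these stay in a slightly relaxed admissible class, with constants still controlled by PI. Set $F(t)=\Lambda_{k_t}^{\mathcal P_t}$. Section 4 gives that $F$ is Fréchet differentiable in $t$. By polarization, $\langle F'(t)f,g\rangle$ equals a sum of two terms. The first is $(k_2-k_1)\int_{\mathcal P_t}\nabla u_f\cdot\nabla u_g$. The second is a boundary term $(k_t-1)\int_{\partial\mathcal P_t}(V\cdot n)\,(\text{jump bilinear form in }\nabla u_f^{\pm},\nabla u_g^{\pm})$, where the velocity field $V$ is piecewise affine on each side and interpolates the vertex displacements $x_i^2-x_i^1$. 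The same section supplies the upper bound $\|F'(t)-F'(s)\|_*\le C|\xi^1-\xi^2|\,|t-s|^{\beta}$. With these, writing $h=\xi^2-\xi^1$,
\[
\|\Lambda_{k_2}^{\mathcal P_2}-\Lambda_{k_1}^{\mathcal P_1}\|_*\ \ge\ \|F'(0)\|_*-C|h|^{1+\beta}.
\]
Hence everything reduces to the lower bound $\|F'(0)\|_*\ge m_0|h|$ with $m_0=m_0(PI)>0$. Since $\|\gamma_{k_1}^{\mathcal P_1}-\gamma_{k_2}^{\mathcal P_2}\|_{L^1}\le C|h|$ for nearby polygons, this gives the local Lipschitz estimate when $|h|$ is small. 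For $|h|\ge\delta$ we use a compactness argument. The admissible class with bounded $N$ is compact in the vertex and $k$ topology, and the map $\xi\mapsto\Lambda$ is continuous. Injectivity of this map, i.e.\ uniqueness, follows from the lower bound together with Section 3, or alternatively from the known uniqueness results. Together these give $\|\Lambda^1-\Lambda^2\|_*\ge c(\delta)>0$, and hence the global estimate.

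The main obstacle is the lower bound $\|F'(0)\|_*\ge m_0|h|$, which we prove by contradiction. Suppose normalized directions $h_j$ with $|h_j|=1$, base points $\xi_j$, and $\|F'_{\xi_j}[h_j]\|_*\to0$. By compactness, pass to limits $\xi_\infty$ and $h_\infty$ with $F'_{\xi_\infty}[h_\infty]=0$. Then the bilinear form above vanishes for all $f,g$. By Runge approximation/unique continuation, it extends to all solutions $u,w$ of the limit equation in the extended domain $\Omega_0=B_{2L}(x_c)$, whose data are all of $H^{1/2}(\partial\Omega)$. Choose singular solutions: Green's functions $G(\cdot,y)$ with pole $y\in\Omega_0\setminus\overline{\Omega}$ moved toward a vertex $x_i$, along a path kept away from $\mathcal P$ outside $\Omega$, with the identity propagated by unique continuation. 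Near a vertex with angle $\alpha_i\ne\pi$, the gradient blow-up of the boundary term at the vertex follows the singular exponent of the corner problem, which is nondegenerate because $|\alpha_i-\pi|\ge\alpha_0$ and $|k-1|\ge\lambda_1^{-1}$. It dominates the volume term and forces the normal part of the vertex displacement to vanish; a second choice of pole direction forces the tangential part to vanish. This gives $h_\infty^{x_i}=0$ for every vertex. Once all vertex components vanish, the remaining term is $(k_2-k_1)\int_{\mathcal P}|\nabla u|^2=0$ with $u\not\equiv$ const, hence the $k$-component also vanishes. So $h_\infty=0$, contradicting $|h_\infty|=1$. The delicate part is making the asymptotics of $G$ near a corner of the inclusion quantitative and uniform in PI; for this we would use the explicit corner expansion of transmission solutions, as in BerFra20, and track the extra factor $(k-1)$.
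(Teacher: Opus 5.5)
Your architecture is the paper's. For $\rho$ above a threshold the bound is trivial. For small $\rho$ you use the Section 3 chain (logarithmic stability, Hausdorff distance, equal vertex number), a path with linearly interpolated vertices and conductivity, the derivative of Theorem \ref{mainderivativestheorem} with a superlinear remainder, and a lower bound $\|B_{\mathcal P,k}[W,\kappa]\|_*\ge m_1(\sum_i|W_i|+|\kappa|)$ from injectivity plus compactness. Injectivity is tested on Green functions with poles in $\Omega_0\setminus\overline\Omega$, propagated by unique continuation in the pole variable.

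The genuine difference is where the pole is sent. The paper sends $y_r=y+r\nu$ to an \emph{interior point of a side}. There the interface is flat, and the image solution gives $\nabla G^i=\frac{2}{k_1+1}\nabla\Gamma+O(1)$, so $r\,S(y_r,y_r)\to\frac{k_1-1}{2\pi(k_1+1)}V_W(y)\cdot\nu$. Hence $V_W\cdot\nu$ vanishes on an open interval of each side, so on the whole side since it is affine, and $W_i=0$ because the two normals at $x_i$ are not parallel. The corner exponent $\mu_i>\frac12$ is used only to make gradient traces square integrable near vertices, away from the pole.

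Your vertex route can be made to work, but not by the mechanism you describe. As $x_i+r\omega\to x_i$, the boundary term blows up like $1/r$ by scaling, whatever $\mu_i$ is. After multiplying by $r$, it tends to $(k-1)\bigl(c_-(\omega)\,W_i\cdot\nu_-+c_+(\omega)\,W_i\cdot\nu_+\bigr)$, with weights $c_\pm(\omega)>0$ given by the Green function of the model two-phase corner. To conclude $W_i=0$ you must show that $c_-(\omega)/c_+(\omega)$ is not constant in $\omega$, and you do not address this. The natural proof lets $\omega$ tend to the direction of one side so that its weight blows up, which is exactly the side-interior asymptotics. The corner detour therefore only adds the convergence of the rescaled Green function to the model problem. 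Two further corrections: $|\alpha_i-\pi|\ge\alpha_0$ is what makes the two normals independent, not what makes an exponent nondegenerate. And uniformity in $PI$ is not needed for injectivity: inside your contradiction/compactness argument it is a qualitative statement at one fixed limit configuration.

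Three smaller points.

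First, the remainder bound you quote, $\|F'(t)-F'(s)\|_*\le C|h|\,|t-s|^{\beta}$, integrates only to an $O(|h|)$ error, which cannot be absorbed. You need $C|h|^{1+\beta}|t-s|^{\beta}$; the paper's \eqref{continuity_of_derivatives} gives $Cq^2t$.

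Second, passing to the limit from $\|F'_{\xi_j}[h_j]\|_*\to0$ requires operator-norm continuity of $(\mathcal P,k,W,\kappa)\mapsto B_{\mathcal P,k}[W,\kappa]$ across base points on the closed classes $\overline{\mathcal A}_N\times\mathcal K_{\mathrm{adm}}$. The original classes $\mathcal A,\mathcal K$ are defined with strict inequalities and need not be compact. A bound along one path does not give this continuity. The paper gets it from the distributed form \eqref{distributed B full}, which has no gradient traces, after pulling back by $\Psi_n$.

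Third, your separate compactness argument for $|h|\ge\delta$ is unnecessary. The logarithmic estimate gives $v\le C|\log\rho|^{-\alpha}$, and together with the area bound $|\mathcal P_1\cap\mathcal P_2|\ge a_*/2$ it also gives $|k_1-k_2|\le C|\log\rho|^{-\alpha}$. As written, that argument also invokes injectivity of $\xi\mapsto\Lambda$ in labeled vertex coordinates, which fails under cyclic relabeling. The distance would have to be taken modulo relabeling, or measured by $\|\gamma_{k_1}^{\mathcal P_1}-\gamma_{k_2}^{\mathcal P_2}\|_{L^1(\Omega)}$.
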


\section{Some results on distance between polygons}

    The key point to prove Theorem \ref{mainthm} is to describe the change between two conductivities $\gamma_{k_{1}}^{\mathcal{P}_{1}}$ and $\gamma_{k_{2}}^{\mathcal{P}_{2}}$, which will be decomposed into the transformation between two different polygons $\mathcal{P}_{1}$ and $\mathcal{P}_{2}$ as well as the change between two different unknown constants $k_{1}$ and $k_{2}$. The latter one could be easily formulated by $\lvert k_{1}-k_{2}\rvert$. For the variation in shapes, a group of vector fields defined on the vertices of polygons will help us to formulate the geometric deformation. This naturally requires such two polygons $\mathcal{P}_{1}$ and $\mathcal{P}_{2}$ have the same number of vertices. In fact, thanks to Proposition 3.4 in \cite{BerFra20}, if the difference between two Dirichlet-to-Neumann maps is small enough in the sense of the operator norm, then not only the polygons have the same number of vertices, but also the distances between vertices are small enough. A similar discussion can also be applied for unknown conductivity constants in this paper. We will present some related results here.
    
    By Lemma 2.2 in \cite{BdeHFS} and the elementary estimate below, the admissible conductivities are uniformly bounded at each fixed fractional Sobolev order:

    \begin{lemma} \label{holder_space_result}
        Fix $s_{*}:=\frac14$. For any polygon $\mathcal{P}\in\mathcal{A}$
        and any conductivity constant $k\in\mathcal{K}$, there exists some constant $\Gamma_{0}$ which only depends on the prior information $PI$ such that the conductivity associated with $\mathcal{P}$ and $k$, namely $\gamma_{k}^{\mathcal{P}}$ as defined in Definition \ref{def2}, satisfies
    \[
        \left\Vert \gamma_{k}^{\mathcal{P}}\right\Vert _{H^{s_{*}}\left(\Omega\right)}\leq\Gamma_{0}.
    \]
    \end{lemma}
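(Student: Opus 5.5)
Since $\gamma_{k}^{\mathcal P}=1+(k-1)\chi_{\mathcal P}$, the plan is to reduce the estimate to a bound on $\Vert\chi_{\mathcal P}\Vert_{H^{s_*}(\Omega)}$ that is uniform over $\mathcal A$. By the triangle inequality,
\[
\Vert\gamma_{k}^{\mathcal P}\Vert_{H^{s_*}(\Omega)}\le \Vert 1\Vert_{H^{s_*}(\Omega)}+|k-1|\,\Vert\chi_{\mathcal P}\Vert_{H^{s_*}(\Omega)}.
\]
The first term is bounded by $|\Omega|^{1/2}$, since the Gagliardo seminorm of a constant vanishes. Moreover $|\Omega|\le \pi L^2$ because $\operatorname{diam}\Omega\le L$. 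For the second term, $|k-1|<\lambda_1$ by Definition \ref{def2}. It therefore remains to bound $\Vert\chi_{\mathcal P}\Vert_{H^{s_*}}$ in terms of $PI$.

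For the characteristic function I would invoke Lemma 2.2 of \cite{BdeHFS}. That lemma states that for a bounded set $E\subset\mathbb R^2$ whose boundary is Lipschitz with constants $(r_0,K_0)$, and for $0<s<1/2$, one has $\Vert\chi_E\Vert_{H^s(\mathbb R^2)}\le C(s,r_0,K_0,|E|,\dots)$. Condition 1 of Definition \ref{def1} supplies exactly this uniform Lipschitz character of $\partial\mathcal P$. In addition, $\mathcal P\subset\Omega$ gives $|\mathcal P|\le\pi L^2$. Applying the lemma with $s=s_*=1/4$ yields a constant depending only on $PI$. Restricting from $\mathbb R^2$ to $\Omega$ can only decrease the norm.

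If a self-contained argument is preferred, the key elementary estimate is the standard bound
\[
[\chi_E]_{H^s}^2=2\int_E\int_{E^c}|x-y|^{-2-2s}\,dy\,dx\le C\int_E \operatorname{dist}(x,\partial E)^{-2s}\,dx.
\]
Since $\partial\mathcal P$ is a polygonal line of length at most $N_0 L$ (each side lies in $\Omega$, so has length $\le L$), the coarea formula gives $|\{x:\operatorname{dist}(x,\partial\mathcal P)<t\}|\le C N_0 L\,t$. Then a layer-cake integration gives
\[
\int_{\mathcal P}\operatorname{dist}^{-2s}\,dx\le C N_0L\int_0^{L} t^{-2s}\,dt<\infty,
\]
which is finite precisely because $2s=1/2<1$.

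The main obstacle is justifying the uniform tubular-neighbourhood bound, i.e.\ that the area of the $t$-neighbourhood of $\partial\mathcal P$ is $O(t)$ with a constant depending only on $PI$. For a polygon with at most $N_0$ sides this is immediate: the neighbourhood is covered by $N_0$ rectangles of width $2t$ around the sides together with discs of radius $t$ at the vertices. The Lipschitz assumption is therefore needed only to match the hypotheses of \cite{BdeHFS}.

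Squaring and adding the $L^2$ part, which is bounded by $|\Omega|^{1/2}\max(1,\lambda_0)$, gives $\Gamma_0$ depending only on $PI$.
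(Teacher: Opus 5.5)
Your proof is correct and follows essentially the paper's route: you reduce to a uniform bound on $\chi_{\mathcal P}$, cite Lemma 2.2 of \cite{BdeHFS}, and back this up with an elementary estimate that uses only the perimeter bound $\mathrm{Per}(\mathcal P)\le N_0L$, the area bound $|\mathcal P|\le\pi L^2$, and $2s_*<1$. The only difference is in how the seminorm is computed. The paper uses the translation estimate $|\mathcal P\triangle(\mathcal P-z)|\le\min\{2|\mathcal P|,\mathrm{Per}(\mathcal P)|z|\}$ split at $|z|=1$, whereas you bound $\int_{\mathcal P}\mathrm{dist}(x,\partial\mathcal P)^{-2s}\,dx$ via the tubular-neighbourhood area, which rests on the same geometric input.
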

    \begin{proof}
        Since $\gamma_{k}^{\mathcal{P}}=1+\left(k-1\right)\chi_{\mathcal P}$ with $\left\lvert k-1\right\rvert\le\lambda_{1}$, it suffices to bound $\chi_{\mathcal P}$ uniformly in $H^{s_{*}}\left(\mathbb R^{2}\right)$. For every $z\in\mathbb R^{2}$,
        \[
            \left\lVert\chi_{\mathcal P}\left(\cdot+z\right)-\chi_{\mathcal P}\right\rVert_{L^{2}\left(\mathbb R^{2}\right)}^{2}
            =\left\lvert\mathcal P\triangle\left(\mathcal P-z\right)\right\rvert
            \le\min\left\{2\left\lvert\mathcal P\right\rvert,\ \mathrm{Per}\left(\mathcal P\right)\left\lvert z\right\rvert\right\},
        \]
        where $\mathrm{Per}\left(\mathcal P\right)$ denotes the perimeter. Splitting the double integral in the Slobodeckij seminorm at $\left\lvert z\right\rvert=1$ gives
        \[
            \left[\chi_{\mathcal P}\right]_{H^{s}\left(\mathbb R^{2}\right)}^{2}
            \le C\left(\frac{\mathrm{Per}\left(\mathcal P\right)}{1-2s}+\frac{\left\lvert\mathcal P\right\rvert}{s}\right),
            \qquad 0<s<\frac12 .
        \]
        Every side of $\mathcal P$ has length at most $L$ (the diameter bound of $\Omega$), hence $\mathrm{Per}\left(\mathcal P\right)\le N_{0}L$, and $\left\lvert\mathcal P\right\rvert\le\left\lvert\Omega\right\rvert\le\pi L^{2}$; together with $\left\lVert\chi_{\mathcal P}\right\rVert_{L^{2}}\le\left\lvert\Omega\right\rvert^{1/2}$ this proves the claim at $s=s_{*}=\frac14$ with a constant depending only on $PI$.
    \end{proof}

    Then, combined Lemma \ref{holder_space_result} with Theorem 1.1 in \cite{clop2010stability}, we have the following logarithmic stability estimate:
    
    \begin{lemma}\label{Logarithmic_stability_estimate}
        For any $\mathcal{P}_{1},\mathcal{P}_{2}\in\mathcal{\mathcal{A}}$ and $k_{1},k_{2}\in\mathcal{K}$, there exist some constant $\alpha$ with $0<\alpha<\frac{1}{4}$, some constant $C>0$ and some $0<\rho_{*}<1$, where $C$, $\alpha$ and $\rho_*$ only depend on the prior information $PI$, such that, if $0<\rho:=\left\Vert \Lambda_{k_{1}}^{\mathcal{P}_{1}}-\Lambda_{k_{2}}^{\mathcal{P}_{2}}\right\Vert _{*}<\rho_{*}$, then the following logarithmic stability estimate holds:
        
        \begin{equation}\label{L2_stability_result}
            \left\Vert \gamma_{k_{1}}^{\mathcal{P}_{1}}-\gamma_{k_{2}}^{\mathcal{P}_{2}}\right\Vert _{L^{2}\left(\Omega\right)}
            \leq 
            C\left|\log\left(\left\Vert \Lambda_{k_{1}}^{\mathcal{P}_{1}}-\Lambda_{k_{2}}^{\mathcal{P}_{2}}\right\Vert _{*}\right)\right|^{-\alpha}.      
        \end{equation}
     Here, $\Lambda_{k_{i}}^{\mathcal{P}_{i}},i=1,2$ is the corresponding Dirichlet-to-Neumann map defined in Definition \ref{def2}.
    \end{lemma}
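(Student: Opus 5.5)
The plan is to apply Theorem 1.1 of \cite{clop2010stability} directly. The work is in checking that the class of conductivities $\gamma_{k}^{\mathcal P}$, $\mathcal P\in\mathcal A$, $k\in\mathcal K$, satisfies its hypotheses with constants depending only on $PI$. That theorem gives logarithmic stability for the planar Calderón problem. Its conductivities are bounded above and below and lie in a fractional Sobolev space $H^{s}$, $0<s<\frac12$, with a priori bounds on ellipticity and on the $H^{s}$ norm. Its conclusion is $\|\gamma_1-\gamma_2\|_{L^2(\Omega)}\le C|\log\|\Lambda_1-\Lambda_2\|_*|^{-\alpha}$ for small $\|\Lambda_1-\Lambda_2\|_*$, with $C,\alpha$ depending only on these bounds and on $\Omega$.

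The steps, in order, are as follows.

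First, uniform ellipticity. From Definition \ref{def2}, $\gamma_k^{\mathcal P}$ takes only the values $1$ and $k$, with $\frac{1}{\lambda_0}<k<\lambda_0$ and $\lambda_0>1$. Hence
\[
\lambda_0^{-1}\le\gamma_k^{\mathcal P}\le\lambda_0 \quad\text{a.e. in }\Omega .
\]

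Second, regularity. Lemma \ref{holder_space_result} gives $\|\gamma_k^{\mathcal P}\|_{H^{1/4}(\Omega)}\le\Gamma_0$ uniformly, and $s_*=\frac14$ lies in the admissible range $(0,\frac12)$.

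Third, the normalization near the boundary. Results of this type usually require $\gamma\equiv1$ near $\partial\Omega$, or extend $\gamma$ by $1$ to a larger ball. Here $\operatorname{dist}(\mathcal P,\partial\Omega)>d_0$ gives $\gamma\equiv1$ in a $d_0$-neighbourhood of $\partial\Omega$. Extending by $1$ to $B_{2L}(x_c)$ keeps the $H^{1/4}$ bound, up to a constant depending on $PI$, because the extended function is $1+(k-1)\chi_{\mathcal P}$ and the proof of Lemma \ref{holder_space_result} bounds it on all of $\mathbb R^2$. The Lipschitz character $(r_0,K_0)$ and diameter $L$ of $\Omega$ cover any boundary-regularity hypothesis of \cite{clop2010stability}. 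If a DtN map on the larger domain is needed, one can compare it with the one on $\Omega$ in the standard way, since the conductivities agree near $\partial\Omega$.

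Finally, invoking the theorem yields the constants $C$ and $\alpha$. The exponent $\alpha$ is tied to $s_*$, which gives the restriction $0<\alpha<\frac14$. The condition $\rho<\rho_*<1$ makes $|\log\rho|$ well defined and bounded away from zero; if the cited result is stated for $\rho<1/e$, say, we take $\rho_*$ to be that threshold. The case $\rho$ small but $\rho\ge\rho_*$ is excluded by hypothesis, so no trivial-bound argument is needed.

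The main obstacle is matching the exact hypotheses and norms of \cite{clop2010stability}. In particular, the operator norm there may be taken on a different pair of trace spaces, or its conductivities may be assumed to be defined on all of $\mathbb R^2$ and equal to $1$ outside $\Omega$. I would handle both through the extension argument above. If necessary, I would also use the uniform $L^\infty$ bound $\lambda_0$ to interpolate between the norms the theorem controls and the $L^2$ norm stated here.
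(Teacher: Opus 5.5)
Your proposal is correct and is essentially the paper's argument: the paper obtains this lemma in one line, by combining the uniform $H^{1/4}$ bound of Lemma \ref{holder_space_result} with Theorem 1.1 of \cite{clop2010stability}, and your checks of ellipticity ($\lambda_0^{-1}\le\gamma_k^{\mathcal P}\le\lambda_0$), of the fractional regularity, and of $\gamma\equiv1$ near $\partial\Omega$ just make that citation explicit. One small point: rather than attributing the range $0<\alpha<\frac14$ to $s_*$, take $\rho_*\le e^{-1}$, so that $\lvert\log\rho\rvert\ge1$ and any exponent furnished by the cited theorem can be decreased freely, which gives the stated range.
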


    Similar to the proof of Proposition 3.4 in \cite{BerFra20}, we now use the logarithmic stability to show that not only two polygons have the same number of vertices, but also the distances between vertices are small enough provided two Dirichlet-to-Neumann maps are close enough. We first have the following lemma:
    
    \begin{lemma}\label{maindistancelemma}   
        Given two polygons $\mathcal{P}_{1},\mathcal{P}_{2}\in\mathcal{A}$ and two conductivity constants $k_{1},k_{2}\in\mathcal{K}$, let $\rho=\left\Vert \Lambda_{k_{1}}^{\mathcal{P}_{1}}-\Lambda_{k_{2}}^{\mathcal{P}_{2}}\right\Vert _{*}$ be the difference of the corresponding Dirichlet-to-Neumann map. Let $\rho_*$ be the constant of Lemma \ref{Logarithmic_stability_estimate}. There exist constants $\alpha$ and $C$, depending only on the prior information $PI$, such that, if $0<\rho<\rho_*$, then
        
        \begin{equation}  
            \sqrt{\left|\mathcal{P}_{1}\triangle\mathcal{P}_{2}\right|}\leq C\left|\log\left(\rho\right)\right|^{-\alpha}.
        \end{equation}

        Here, $\left|\mathcal{P}_{1}\triangle\mathcal{P}_{2}\right|$ represents the Lebesgue measure of the symmetric difference between $\mathcal{P}_{1}$ and   $\mathcal{P}_{2}$.  
    \end{lemma}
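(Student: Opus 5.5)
The plan is to derive the estimate from the $L^{2}$ logarithmic stability of Lemma \ref{Logarithmic_stability_estimate}. We need a pointwise lower bound for $\left|\gamma_{k_{1}}^{\mathcal{P}_{1}}-\gamma_{k_{2}}^{\mathcal{P}_{2}}\right|$ on $\mathcal{P}_{1}\triangle\mathcal{P}_{2}$. On $\mathcal{P}_{1}\setminus\mathcal{P}_{2}$ the difference equals $k_{1}-1$. On $\mathcal{P}_{2}\setminus\mathcal{P}_{1}$ it equals $1-k_{2}$. By Definition \ref{def2}, both have absolute value greater than $\frac{1}{\lambda_{1}}$.

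Hence
\[
\frac{1}{\lambda_{1}^{2}}\left|\mathcal{P}_{1}\triangle\mathcal{P}_{2}\right|
\leq\int_{\mathcal{P}_{1}\triangle\mathcal{P}_{2}}\left|\gamma_{k_{1}}^{\mathcal{P}_{1}}-\gamma_{k_{2}}^{\mathcal{P}_{2}}\right|^{2}
\leq\left\Vert \gamma_{k_{1}}^{\mathcal{P}_{1}}-\gamma_{k_{2}}^{\mathcal{P}_{2}}\right\Vert _{L^{2}\left(\Omega\right)}^{2}.
\]
The region $\mathcal{P}_{1}\cap\mathcal{P}_{2}$, where the difference is $k_{1}-k_{2}$, is simply discarded, since it only contributes a nonnegative term. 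Taking square roots gives
\[
\sqrt{\left|\mathcal{P}_{1}\triangle\mathcal{P}_{2}\right|}\leq\lambda_{1}\left\Vert \gamma_{k_{1}}^{\mathcal{P}_{1}}-\gamma_{k_{2}}^{\mathcal{P}_{2}}\right\Vert _{L^{2}\left(\Omega\right)}.
\]

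Now assume $0<\rho<\rho_{*}$ and apply \eqref{L2_stability_result}. This yields the claim with the same exponent $\alpha$ as in Lemma \ref{Logarithmic_stability_estimate}, and with constant $\lambda_{1}C$. Both depend only on $PI$.

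There is no genuine obstacle. The only point to check is that the lower bound $\left|k-1\right|>\frac{1}{\lambda_{1}}$ is used for each polygon separately, with $k_{1}$ on one part of the symmetric difference and $k_{2}$ on the other. Consequently no information about $k_{1}-k_{2}$ is needed, and the unknown conductivity does not spoil the geometric estimate.
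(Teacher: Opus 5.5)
Your proposal is correct and follows the paper's proof essentially verbatim. The paper likewise splits $\gamma_{k_1}^{\mathcal P_1}-\gamma_{k_2}^{\mathcal P_2}$ over the regions, uses $\left\lvert k_i-1\right\rvert>\frac{1}{\lambda_1}$ on the two pieces of the symmetric difference, drops the nonnegative contribution of $\mathcal P_1\cap\mathcal P_2$, and then applies \eqref{L2_stability_result} to obtain the constant $\lambda_1 C$ with the same exponent $\alpha$.
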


    \begin{proof}
        If $\rho=0$, uniqueness for the two-dimensional conductivity problem (see \cite{astala2006calderon}) gives $\gamma_{k_{1}}^{\mathcal{P}_{1}}=\gamma_{k_{2}}^{\mathcal{P}_{2}}$ a.e., hence $k_{1}=k_{2}$ and $\mathcal{P}_{1}=\mathcal{P}_{2}$ up to a set of measure zero, and the claim is trivial. We therefore assume $0<\rho<\rho_*$, so that \eqref{L2_stability_result} applies and $\left|\log\rho\right|$ is well defined and positive. We calculate that
        \[
            \gamma_{k_{1}}^{\mathcal{P}_{1}}-\gamma_{k_{2}}^{\mathcal{P}_{2}}=
            \begin{cases}
                0 & \Omega\setminus\left(\mathcal{P}_{1}\cup\mathcal{P}_{2}\right)\\
                k_{1}-1 & \mathcal{P}_{1}\setminus\mathcal{P}_{2}\\
                1-k_{2} & \mathcal{P}_{2}\setminus\mathcal{P}_{1}\\
                k_{1}-k_{2} & \mathcal{P}_{1}\cap\mathcal{P}_{2}
            \end{cases}.
        \]

        Squaring the norm on the left hand side of \eqref{L2_stability_result} and inserting the case decomposition above, we get
        \begin{equation}\label{estimate_P1triangleP2}
            \begin{aligned}
                \left\Vert \gamma_{k_{1}}^{\mathcal{P}_{1}}-\gamma_{k_{2}}^{\mathcal{P}_{2}}\right\Vert _{L^{2}\left(\Omega\right)}^{2}  &=\left(k_{1}-1\right)^{2}\left|\mathcal{P}_{1}\setminus\mathcal{P}_{2}\right|\\
                & +\left(k_{2}-1\right)^{2}\left|\mathcal{P}_{2}\setminus\mathcal{P}_{1}\right|\\
                & +\left(k_{1}-k_{2}\right)^{2}\left|\mathcal{P}_{1}\cap\mathcal{P}_{2}\right|\\
                & \geq\frac{1}{\lambda_{1}^{2}}\left|\mathcal{P}_{1}\triangle\mathcal{P}_{2}\right|.
            \end{aligned}
        \end{equation}

        Combined \eqref{L2_stability_result} and \eqref{estimate_P1triangleP2}, we have 
        \begin{align*}
            \left|\mathcal{P}_{1}\triangle\mathcal{P}_{2}\right| 
            & \leq\lambda_{1}^{2}\left\Vert \gamma_{k_{1}}^{\mathcal{P}_{1}}-\gamma_{k_{2}}^{\mathcal{P}_{2}}\right\Vert _{L^{2}\left(\Omega\right)}^{2}\\
            & \leq\lambda_{1}^{2}C^{2}\left|\log\left(\rho\right)\right|^{-2\alpha},
        \end{align*}

        with $\rho=\left\Vert \Lambda_{k_{1}}^{\mathcal{P}_{1}}-\Lambda_{k_{2}}^{\mathcal{P}_{2}}\right\Vert _{*}$, which complete the proof.
    \end{proof}

    We record a uniform lower bound on the area of admissible polygons, which will be used in Section 6 to control the conductivity difference in the small-data regime.

    \begin{lemma}\label{area lower bound}
        There exists a constant $a_{*}>0$, depending only on the prior information $PI$, such that
        \[
            \left|\mathcal P\right|\geq a_{*}
        \]
        for every $\mathcal P\in\mathcal A$.
    \end{lemma}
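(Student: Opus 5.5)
The plan is to produce an explicit region of controlled size inside $\mathcal P$, using only the local graph description of $\partial\mathcal P$ from item 1 of Definition \ref{def1}. Fix any point $p\in\partial\mathcal P$; a vertex is convenient, but any boundary point works. Take the chart rectangle $Q=\left[-r_{0},r_{0}\right]\times\left[-K_{0}r_{0},K_{0}r_{0}\right]$ centered at $p=0$, in which
\[
\left(\operatorname{int}\mathcal P\right)\cap Q=\left\{\left(x_{1},x_{2}\right)\in Q:\ x_{2}>\phi\left(x_{1}\right)\right\}
\]
with $\phi\left(0\right)=0$ and $\operatorname{Lip}\phi\le K_{0}$.

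The key step is to exhibit a region of fixed size inside $\left(\operatorname{int}\mathcal P\right)\cap Q$. For $\left|x_{1}\right|\le r_{0}$ the Lipschitz bound gives $\phi\left(x_{1}\right)\le K_{0}\left|x_{1}\right|$. Hence the set
\[
\left\{\left(x_{1},x_{2}\right):\ \left|x_{1}\right|\le r_{0},\ K_{0}\left|x_{1}\right|<x_{2}<K_{0}r_{0}\right\}
\]
lies above the graph of $\phi$ and inside $Q$, so it is contained in $\operatorname{int}\mathcal P$. This set is the triangle with vertices $\left(0,0\right)$, $\left(\pm r_{0},K_{0}r_{0}\right)$, minus a null set. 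Its area is $K_{0}r_{0}^{2}$, so $\left|\mathcal P\right|\ge K_{0}r_{0}^{2}=:a_{*}$, which depends only on $PI$.

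The one point that needs care is the degenerate case $K_{0}=0$. There the rectangle $Q$ is flat and the computation above gives nothing. I would handle this by noting that the Lipschitz hypothesis with $K_{0}$ implies the same hypothesis with any larger constant, for instance $\max\{K_{0},1\}$, after possibly shrinking the rectangle. Alternatively, one can observe that a polygon with angle bounds $\left|\alpha_{i}-\pi\right|\ge\alpha_{0}$ cannot be a global graph with zero slope at a vertex. Either way, one may assume $K_{0}>0$ without loss of generality.

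As a backup argument, the same conclusion follows from items 2 and 3 of Definition \ref{def1}. At a vertex $x_{i}$ with interior angle $\alpha_{i}\in\left[\alpha_{0},2\pi-\alpha_{0}\right]$ and adjacent sides of length $>d_{0}$, one can try to show that the circular sector of radius $c\,d_{0}$ and opening $\min\{\alpha_{i},\pi\}$ at $x_{i}$ lies in $\mathcal P$. The obstacle there is ruling out other sides of $\mathcal P$ entering the sector, which would need a separation estimate between non-adjacent sides. That is exactly why I prefer the chart argument above, which avoids this difficulty entirely.
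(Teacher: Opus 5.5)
Your proposal is correct and is the paper's own argument: take the chart at an arbitrary boundary point, use $\varphi(x_1)\le K_0|x_1|$ to place the triangle $\{K_0|x_1|<x_2\le K_0r_0\}$ inside $\operatorname{int}\mathcal P$, and conclude with $a_*=K_0r_0^2$. In your side remark on $K_0=0$ (a case the paper tacitly excludes), only the second justification holds up: raising the slope constant to $K_0'$ forces the window down to $r_0K_0/K_0'$, which is zero when $K_0=0$, whereas the angle condition $\left|\alpha_i-\pi\right|\ge\alpha_0$ does show that the admissible class would then be empty, so nothing is lost.
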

    \begin{proof}
        Fix $\mathcal P\in\mathcal A$ and a point $x_0\in\partial\mathcal P$. In the chart of Definition \ref{def1} centered at $x_0$ one has, after a translation so that $x_0$ has coordinates $\left(0,0\right)$,
        \[
            \left(\operatorname{int}\mathcal P\right)\cap Q=\left\{\left(x_1,x_2\right)\in Q:\ x_2>\varphi\left(x_1\right)\right\},\qquad Q:=\left[-r_0,r_0\right]\times\left[-K_0r_0,K_0r_0\right],
        \]
        with $\operatorname{Lip}\varphi\le K_0$ and $\varphi\left(0\right)=0$, hence $\varphi\left(x_1\right)\le K_0\left\lvert x_1\right\rvert$. For every $x_1\in\left[-r_0,r_0\right]$ the vertical segment $\left\{\left(x_1,x_2\right):\ K_0\left\lvert x_1\right\rvert<x_2\le K_0r_0\right\}$ lies in $\left(\operatorname{int}\mathcal P\right)\cap Q$, so
        \[
            \left|\mathcal P\right|\ge\left|\left(\operatorname{int}\mathcal P\right)\cap Q\right|\ge\int_{-r_0}^{r_0}\left(K_0r_0-K_0\left\lvert x_1\right\rvert\right)\mathrm dx_1=K_0r_0^{2},
        \]
        which proves the claim with $a_{*}:=K_0r_0^{2}$.
    \end{proof}

    We also record the uniform separation of non-adjacent sides, which is the geometric fact behind the compactness of the admissible polygon class and the uniform extension constructions used below.

    \begin{lemma}\label{edge separation}
        There exists a constant $d_{*}>0$, depending only on the prior information $PI$, such that for every $\mathcal P\in\mathcal A$ and every pair of non-adjacent sides $e_i,e_j\subset\partial\mathcal P$,
        \[
            \mathrm{dist}\left(e_i,e_j\right)\geq d_{*} .
        \]
    \end{lemma}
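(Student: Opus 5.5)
The plan is to use the Lipschitz chart condition on $\partial\mathcal P$ from Definition \ref{def1}. Near any boundary point, $\partial\mathcal P$ is a single graph over an interval of length $2r_0$, and a single graph cannot approach itself except through consecutive pieces. I would aim for $d_* := \min\{r_0, d_0\}/C(K_0,\alpha_0)$, or more concretely $d_*:=\min\{r_0,K_0r_0\}\cdot c$ for a small absolute $c$.

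First I reduce to a point-to-side statement. Suppose $x\in e_i$ and $y\in e_j$ with $|x-y|<d_*$. Put $x$ at the origin of the chart of Definition \ref{def1}, with box $Q=[-r_0,r_0]\times[-K_0r_0,K_0r_0]$. If $d_*$ is small compared with $r_0/(1+K_0)$, then $y$ lies in the interior region $\{|x_1|<r_0/2,\ |x_2|<K_0r_0/2\}$ of $Q$. I need to rule out the degenerate placement where $y$ sits vertically above or below $x$ at a height comparable to $K_0r_0$. For this I choose $d_*<\frac12\min\{r_0,K_0r_0\}$, so that $y\in Q$ and $y$ is not at the top or bottom edge. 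Since $y\in\partial\mathcal P\cap Q$, the graph description gives $y=(y_1,\varphi(y_1))$.

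Next I use that $\partial\mathcal P\cap Q$ is the connected arc $\Gamma=\{(t,\varphi(t)) : t \text{ between } 0 \text{ and } y_1\}$ joining $x$ to $y$. Its length is at most $\sqrt{1+K_0^2}\,|y_1|\le\sqrt{1+K_0^2}\,d_*$. This arc is a piece of the simple closed polygonal line $\partial\mathcal P$ starting on $e_i$ and ending on $e_j$. Because $e_i$ and $e_j$ are non-adjacent, the arc must fully traverse at least one side lying between them in the cyclic order. Here I have to check that the graph arc is exactly one of the two boundary arcs from $x$ to $y$, not a wrap-around. This holds because the graph over $[-r_0,r_0]$ is a simple arc contained in the simple closed curve $\partial\mathcal P$, so any sub-arc of it between two of its points coincides with one of the two boundary arcs. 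Every side has length $>d_0$ by item 3 of Definition \ref{def1}. So $\sqrt{1+K_0^2}\,d_*>d_0$, which is a contradiction once $d_*\le d_0/\sqrt{1+K_0^2}$.

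Combining the two steps gives
\[
d_*:=\tfrac12\min\left\{r_0,\ K_0r_0,\ \frac{d_0}{\sqrt{1+K_0^2}}\right\},
\]
which depends only on $PI$. The step I expect to be the main obstacle is the first one: making sure that a point $y$ close to $x$ actually lies on the graph portion of the chart. This needs $Q$ to contain a genuine Euclidean neighbourhood of $x$ of radius about $\min\{r_0,K_0r_0\}$, and it also needs $\partial\mathcal P\cap Q$ to consist only of the graph. The second requirement is supplied directly by Definition \ref{def1}, which states $\partial\mathcal P\cap Q$ equals the graph. The angle condition involving $\alpha_0$ is not needed.
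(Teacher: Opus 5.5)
Your proposal is correct and follows the paper's proof essentially verbatim. You use the same constant $d_*=\tfrac12\min\{r_0,K_0r_0,d_0/\sqrt{1+K_0^2}\}$, place one point at the chart origin, and note that the nearby point lies on the same $K_0$-Lipschitz graph, so the boundary arc between them has length at most $\sqrt{1+K_0^2}\,|x-y|<d_0$, yet must contain a full side. You also check explicitly that the graph sub-arc is one of the two boundary arcs between the points, which the paper leaves implicit.
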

    \begin{proof}
        Set $d_{*}:=\min\left\{\frac{r_0}{2},\ \frac{K_0r_0}{2},\ \frac{d_0}{2\sqrt{1+K_0^{2}}}\right\}$. Suppose, for contradiction, that there exist $\mathcal P_n\in\mathcal A$ and non-adjacent sides $e_i^n,e_j^n\subset\partial\mathcal P_n$ with $\mathrm{dist}\left(e_i^n,e_j^n\right)<d_{*}$. Choose $p_n\in e_i^n$ and $q_n\in e_j^n$ with $\left\lvert p_n-q_n\right\rvert=\mathrm{dist}\left(e_i^n,e_j^n\right)<d_{*}$. In the coordinate chart of Definition \ref{def1} centered at $p_n$ (so that $p_n$ has coordinates $\left(0,0\right)$ and $\partial\mathcal P_n$ is the graph of a $K_0$-Lipschitz function $\varphi$ with $\varphi\left(0\right)=0$), both coordinates of $q_n$ have absolute value at most $\left\lvert p_n-q_n\right\rvert$. Since $\left\lvert p_n-q_n\right\rvert<d_{*}\le\min\left\{\frac{r_0}{2},\frac{K_0r_0}{2}\right\}$, the point $q_n$ lies in the chart rectangle, and as $q_n\in\partial\mathcal P_n$ it lies on the graph of $\varphi$. The subarc of this graph between $p_n$ and $q_n$ is a boundary arc of $\mathcal P_n$ whose length is at most
        \[
            \sqrt{1+K_0^{2}}\,\left\lvert p_n-q_n\right\rvert<\sqrt{1+K_0^{2}}\,d_{*}\le\frac{d_0}{2}<d_0 .
        \]
        Because $p_n$ and $q_n$ lie on non-adjacent sides, this boundary arc contains at least one complete side of $\mathcal P_n$: it must travel from the side of $p_n$ to the side of $q_n$ through at least one intervening side. Its length is therefore at least $d_0$, contradicting the bound above. This proves the claim.
    \end{proof}

    We then have the following results (Lemma 3.2 in \cite{BerFra20} and Proposition 3.3 in \cite{BerFra20}):
    
    \begin{lemma}\label{first_distance_estimate}
        For any $\mathcal{P}_{1},\mathcal{P}_{2}\in\mathcal{A}$ and $k_{1},k_{2}\in\mathcal{K}$, there exists some constant $C>0$ only depends on the prior information $PI$ such that there holds
        \begin{equation}
            \text{d}_{H}\left(\partial\mathcal{P}_{1},\partial\mathcal{P}_{2}\right)\leq C\sqrt{\left|\mathcal{P}_{1}\triangle\mathcal{P}_{2}\right|}
        \end{equation}

        where $\text{d}_{H}\left(\partial\mathcal{P}_{1},\partial\mathcal{P}_{2}\right)$ represent the Hausdorff distance between $\partial\mathcal{P}_{1}$ and $\partial\mathcal{P}_{2}$.
    \end{lemma}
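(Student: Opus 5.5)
The inequality is purely geometric: the constants $k_1,k_2$ play no role. I would argue directly from the uniform Lipschitz character of $\partial\mathcal P_i$ in Definition \ref{def1}. Set $\delta:=\mathrm{d}_H\left(\partial\mathcal P_1,\partial\mathcal P_2\right)$. The aim is to show that a region of area comparable to $\delta^2$ lies inside $\mathcal P_1\triangle\mathcal P_2$.

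First I dispose of the case of large $\delta$. Since $\delta\le L$ trivially, it suffices to treat $\delta\le \delta_0:=\min\{r_0,K_0r_0\}/4$. Indeed, when $\delta>\delta_0$ the same argument below, run at scale $\delta_0$ instead of $\delta$, produces $|\mathcal P_1\triangle\mathcal P_2|\ge c\,\delta_0^2$. This gives $\delta\le L\le (L/\sqrt{c}\,\delta_0)\sqrt{|\mathcal P_1\triangle\mathcal P_2|}$, a constant depending only on $PI$.

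Now the main case. Without loss of generality there is a point $x\in\partial\mathcal P_1$ with $\mathrm{dist}(x,\partial\mathcal P_2)=\delta$, so $B_\delta(x)\cap\partial\mathcal P_2=\emptyset$. Since $B_\delta(x)$ is connected, it lies entirely in $\operatorname{int}\mathcal P_2$ or entirely in $\mathbb R^2\setminus\mathcal P_2$. Work in the chart of $\partial\mathcal P_1$ centered at $x$. Because $\delta<r_0/4$ and $\delta<K_0r_0/4$, the ball is contained in $Q$. Inside $Q$, both the set $\{x_2>\varphi(x_1)\}$ and its complement contain a cone with vertex $x$ and half-opening angle $\arctan(1/K_0)$, namely $\{x_2>K_0|x_1|\}$ and $\{x_2<-K_0|x_1|\}$. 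Intersected with $B_\delta(x)$, each cone sector has area $c(K_0)\delta^2$, where $c(K_0)=\operatorname{arccot}(K_0)\,$. If $B_\delta(x)\subset\mathcal P_2$, the lower sector lies in $\mathcal P_2\setminus\mathcal P_1$. Otherwise the upper sector lies in $\mathcal P_1\setminus\mathcal P_2$. In either case $|\mathcal P_1\triangle\mathcal P_2|\ge c(K_0)\delta^2$, which is the claim with $C=c(K_0)^{-1/2}$.

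The main obstacle is the scale restriction: the cone argument is only valid inside a single chart rectangle, which is why I treat the small-$\delta$ regime separately and reduce the large one to it. A second point to check is that the chart of Definition \ref{def1} is centered at every boundary point with uniform $(r_0,K_0)$, which the definition grants. With that, no use of vertices or angles is needed.
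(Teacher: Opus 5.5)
Your proof is correct. The paper gives no argument of its own for this lemma and simply imports it from Lemma~3.2 of \cite{BerFra20}. You instead supply a self-contained version of the standard argument for estimates of this type. Take a point $x\in\partial\mathcal P_1$ realizing $\delta$. The open ball around $x$ misses $\partial\mathcal P_2$, so it lies on one side of it. The two uniform cones $\{x_2>K_0\lvert x_1\rvert\}$ and $\{x_2<-K_0\lvert x_1\rvert\}$ from the $(r_0,K_0)$ chart of $\partial\mathcal P_1$ at $x$ then give $\lvert\mathcal P_1\triangle\mathcal P_2\rvert\ge\operatorname{arccot}(K_0)\min\{\delta,\delta_0\}^2$.

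What your route buys over the citation is transparency about the constant. It shows that $C$ depends only on $r_0$, $K_0$ and $L$. It confirms that $k_1,k_2$ play no role, so their appearance in the statement is vacuous. It also shows that the vertex-count, angle and side-length conditions of Definition~\ref{def1} are not needed here; they matter only in the subsequent vertex-matching step (Lemma~\ref{same_number_result}). The citation buys brevity, but it implicitly assumes that the hypotheses of \cite{BerFra20} match those of Definition~\ref{def1}.

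Two cosmetic points. First, the constant in your large-$\delta$ case should be grouped as $L/(\sqrt{c}\,\delta_0)$. Second, you can merge the two cases into one: run the cone argument at scale $\min\{\delta,\delta_0\}$ and use $\delta\le L$, so that $\min\{\delta,\delta_0\}\ge\min\{1,\delta_0/L\}\,\delta$. This gives $\lvert\mathcal P_1\triangle\mathcal P_2\rvert\ge c\min\{1,\delta_0/L\}^2\delta^2$ directly.
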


    \begin{lemma}\label{same_number_result}
        There exists some positive constants $C$ and $\delta_{\mathrm{geom}}$ only depend on the prior information $PI$, such that if $\text{d}_{H}\left(\partial\mathcal{P}_{1},\partial\mathcal{P}_{2}\right)\leq\delta_{\mathrm{geom}}$, then 
        \begin{enumerate}
            \item 
                $\mathcal{P}_{1}$ and $\mathcal{P}_{2}$ have the same number of
                vertices, in other words, there exists some integer $N$ such that
                $^{\#}\mathcal{P}_{1}={}^{\#}\mathcal{P}_{2}=N$
            \item 
                d$\left(x_{1}^{i},x_{2}^{i}\right)\leq C\text{d}_{H}\left(\partial\mathcal{P}_{1},\partial\mathcal{P}_{2}\right)$,
                here $x_{j}^{i}$ represent the $i$-th vertex of $\mathcal{P}_{j}$, for $j=1,2$ and $i=1,\cdots,N$.
        \end{enumerate}
    \end{lemma}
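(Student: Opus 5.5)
The plan is to argue directly from the geometry encoded in Definition \ref{def1}: the angle bounds $\left|\alpha_i-\pi\right|\geq\alpha_0$, side lengths at least $d_0$, and the uniform separation of non-adjacent sides from Lemma \ref{edge separation}. Set $\delta:=\text{d}_{H}\left(\partial\mathcal{P}_{1},\partial\mathcal{P}_{2}\right)$. The key intermediate claim is the following. For every vertex $x_1^i$ of $\mathcal P_1$ there is a vertex of $\mathcal P_2$ within distance $C\delta$, provided $\delta\le\delta_{\mathrm{geom}}$ with $\delta_{\mathrm{geom}}$ small compared with $d_0$, $d_*$ and $\sin\alpha_0$.

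To prove the claim, fix $x_1^i$ and the disc $B:=B_{r}\left(x_1^i\right)$ with $r=c\min\left\{d_0,d_*\right\}$. Inside $B$, the set $\partial\mathcal P_1$ is exactly the union of two segments meeting at $x_1^i$ with angle $\alpha_i$. By Lemma \ref{edge separation}, at most one vertex of $\mathcal P_2$, together with at most two sides, can enter a slightly smaller disc. Every point of $\partial\mathcal P_2\cap B$ lies in the $\delta$-neighbourhood of these two segments, and conversely.

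Suppose $\partial\mathcal P_2$ had no vertex in $B_{C\delta}\left(x_1^i\right)$. Then near $x_1^i$ it would be a single straight segment $\ell$ (by the separation and minimal length conditions, which are also satisfied by $\mathcal P_2$). This segment would have to lie within $\delta$ of both arms of the corner on a length scale $r/2$ and cover both arms within $\delta$. Since the arms make an angle bounded away from $\pi$ by $\alpha_0$, an elementary trigonometric estimate shows that a single line cannot approximate both arms over length $r/2$ within $\delta$ unless $\delta\geq c\, r\sin\alpha_0$. This is excluded by the choice of $\delta_{\mathrm{geom}}$.

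The vertex of $\mathcal P_2$ found this way must be at distance $\le C\delta/\sin\left(\alpha_0/2\right)$ from $x_1^i$. Near it, $\partial\mathcal P_2$ consists of two segments, each $\delta$-close to the corresponding arms of $\mathcal P_1$. Their directions therefore differ by $O\left(\delta/r\right)$, and the intersection point of two lines with transversality $\gtrsim\sin\alpha_0$ moves by $O\left(\delta/\sin\alpha_0\right)$.

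This gives a map from vertices of $\mathcal P_1$ to vertices of $\mathcal P_2$. It is injective because distinct vertices of $\mathcal P_1$ are $\geq d_0\gg C\delta$ apart. Running the argument with the roles swapped gives an injection the other way, so ${}^{\#}\mathcal P_1={}^{\#}\mathcal P_2=N$, which is item 1.

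Item 2 requires the correspondence to respect the counterclockwise labeling up to a cyclic shift. For this I would observe that, for matched vertices, the sides of $\mathcal P_2$ between consecutive matched vertices lie $\delta$-close to the corresponding sides of $\mathcal P_1$. Adjacency is therefore preserved, and orientation is preserved since both boundaries are traversed with the interior on the left and the symmetric difference is small. Item 2 then follows with $C$ depending on $\alpha_0$, $d_0$ and $d_*$.

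The main obstacle is the local step showing that $\partial\mathcal P_2$ near a corner of $\mathcal P_1$ must itself have a corner rather than some other configuration. The alternatives to rule out are, for example, two distant pieces of $\partial\mathcal P_2$, or $\partial\mathcal P_2$ having a vertex at the wrong angle. I would first try to exclude these using the Lipschitz graph representation of both boundaries in a common chart: after choosing $\delta_{\mathrm{geom}}<K_0r_0/4$, both boundaries are graphs over the same axis in $Q$, and the comparison reduces to one-dimensional piecewise linear graphs. Alternatively, this is precisely Proposition 3.3 of \cite{BerFra20}, which the statement cites, and I could invoke it since its proof uses only the geometric properties of $\mathcal A$ and not the conductivity.
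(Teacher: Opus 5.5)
Your fallback is exactly the paper's route. Lemma \ref{same_number_result} is not proved in the paper; it is imported from Proposition 3.3 of \cite{BerFra20}. That is legitimate, because $\mathcal A$ carries the same geometric constraints and the statement involves no conductivity. Your main plan, a self-contained corner-by-corner argument, is a genuinely different and more explicit route. It shows where $|\alpha_i-\pi|\ge\alpha_0$, $d_0$ and the separation $d_*$ of Lemma \ref{edge separation} enter. Unlike the bare citation, it also confronts the fact that the vertex matching must preserve adjacency and counterclockwise orientation. The paper's labeling convention and Lemma \ref{extension_of_affine_map} (where $\mathrm{Id}+V$ must map $\partial\mathcal P_1$ side by side onto $\partial\mathcal P_2$) use this silently. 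The citation buys brevity; your route buys transparency, provided the local step is repaired.

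As written, the local step does not follow. Having no vertex of $\mathcal P_2$ in $B_{C\delta}(x_1^i)$ does not give a single segment on the scale $r/2$. A vertex may lie at distance between $C\delta$ and $r/2$. Even with no vertex in $B_{r/2}(x_1^i)$, the set $\partial\mathcal P_2\cap B_{r/2}(x_1^i)$ may consist of two chords of adjacent sides, so the one-line covering argument does not apply. A repair that stays within your idea:
\begin{itemize}
\item Set $s:=\min\{r/2,\ \mathrm{dist}(x_1^i,\text{vertices of }\mathcal P_2)\}$ and pick $z\in\partial\mathcal P_2$ with $|z-x_1^i|\le\delta$. If $s>3\delta$, the side through $z$ contains $p_\pm:=z\pm(s-\delta)e$, and each $p_\pm$ lies within $\delta$ of a point $x_1^i+t_\pm f_{j_\pm}$ of one arm.
\item If $j_+=j_-$, then $s\le4\delta$.
\item If $j_+\ne j_-$, then $|t_+f_1+t_-f_2|\le4\delta$ with $t_\pm\ge s-3\delta$. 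The unsigned angle between the arms lies in $[\alpha_0,\pi-\alpha_0]$, so $|t_+f_1+t_-f_2|^2\ge(1-\cos\alpha_0)(t_+^2+t_-^2)$, whence $s\le3\delta+2\delta/\sin(\alpha_0/2)$.
\end{itemize}
So $s\le C\delta$ directly. The case $s=r/2$ is excluded by the choice of $\delta_{\mathrm{geom}}$, and your line-intersection refinement becomes unnecessary.

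Smaller points:
\begin{itemize}
\item Distinct vertices are only $\ge\min\{d_0,d_*\}$ apart, not $\ge d_0$.
\item For $N=3$ you also need $r<d_0\sin\alpha_0$, to keep the opposite side out of $B_r(x_1^i)$.
\item Adjacency must be argued rather than assumed. By the angle bound for $\mathcal P_2$, the two sides of $\mathcal P_2$ at the vertex matched to $x_1^i$ follow different arms. The one along $[x_1^i,x_1^{i+1}]$ stays in the $\delta$-neighbourhood of that side, since neighbourhoods of non-adjacent sides are disjoint and those of adjacent sides meet only near the common vertex. It can end only at a vertex of $\mathcal P_2$, hence at the one matched to $x_1^{i+1}$.
\item Smallness of $|\mathcal P_1\triangle\mathcal P_2|$ is not a hypothesis here, so prove orientation with the shoelace formula instead. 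Matched vertices are $C\delta$-close, so an orientation-reversing matching would give $|\mathcal P_2|=-|\mathcal P_1|+O(\delta)$, which is impossible by Lemma \ref{area lower bound}.
\end{itemize}
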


    Finally, Combined Lemma \ref{maindistancelemma}, Lemma \ref{first_distance_estimate}
    and Lemma \ref{same_number_result}, we have:
    
    \begin{theorem}\label{final_same_number_result}
        Given $\mathcal{P}_{1},\mathcal{P}_{2}\in\mathcal{A}$ and $k_{1},k_{2}\in\mathcal{K}$, let $\rho=\left\Vert \Lambda_{k_{1}}^{\mathcal{P}_{1}}-\Lambda_{k_{2}}^{\mathcal{P}_{2}}\right\Vert _{*}$ be the difference of the corresponding Dirichlet-Neumann map, there exists some constant $0<\delta<1$ only depends on the prior information $PI$, such that if $0<\rho<\delta<1$, then $\mathcal{P}_{1}$ and $\mathcal{P}_{2}$ have the same number of vertices.
    \end{theorem}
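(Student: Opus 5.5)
The plan is to chain the three preceding results and choose $\delta$ so that the Hausdorff distance between the boundaries falls below the threshold $\delta_{\mathrm{geom}}$ of Lemma \ref{same_number_result}.

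Fix $\mathcal{P}_{1},\mathcal{P}_{2}\in\mathcal{A}$ and $k_{1},k_{2}\in\mathcal{K}$, and set $\rho=\left\Vert \Lambda_{k_{1}}^{\mathcal{P}_{1}}-\Lambda_{k_{2}}^{\mathcal{P}_{2}}\right\Vert_{*}$. Assume first that $0<\rho<\rho_{*}$, with $\rho_*$ as in Lemma \ref{Logarithmic_stability_estimate}. Then Lemma \ref{maindistancelemma} gives
\[
\sqrt{\left|\mathcal{P}_{1}\triangle\mathcal{P}_{2}\right|}\leq C_{1}\left|\log\rho\right|^{-\alpha},
\]
and Lemma \ref{first_distance_estimate} converts this into
\[
\mathrm{d}_{H}\left(\partial\mathcal{P}_{1},\partial\mathcal{P}_{2}\right)\leq C_{2}C_{1}\left|\log\rho\right|^{-\alpha}.
\]
Here $C_{1},C_{2},\alpha$ depend only on $PI$, and $\alpha>0$.

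The function $\rho\mapsto\left|\log\rho\right|^{-\alpha}$ is increasing on $(0,1)$ and tends to $0$ as $\rho\to0^{+}$. So we may solve
\[
C_{1}C_{2}\left|\log\rho\right|^{-\alpha}\leq\delta_{\mathrm{geom}}
\quad\Longleftrightarrow\quad
\rho\leq\exp\left(-\left(C_{1}C_{2}/\delta_{\mathrm{geom}}\right)^{1/\alpha}\right).
\]
We then define
\[
\delta:=\min\left\{\rho_{*},\ \exp\left(-\left(C_{1}C_{2}/\delta_{\mathrm{geom}}\right)^{1/\alpha}\right)\right\},
\]
which lies in $(0,1)$ and depends only on $PI$ because every ingredient does. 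For $0<\rho<\delta$ both requirements hold, so $\mathrm{d}_{H}\left(\partial\mathcal{P}_{1},\partial\mathcal{P}_{2}\right)\leq\delta_{\mathrm{geom}}$. Item 1 of Lemma \ref{same_number_result} then yields ${}^{\#}\mathcal{P}_{1}={}^{\#}\mathcal{P}_{2}$.

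The argument has no genuine obstacle. The only points needing care are bookkeeping: the constants $\alpha$ and $C$ in Lemmas \ref{Logarithmic_stability_estimate} and \ref{maindistancelemma} must be the same $PI$-dependent ones, and $\delta$ must be taken at most $\rho_*$ so that the logarithmic estimate is applicable. The degenerate case $\rho=0$ is excluded by the hypothesis. Even if it were allowed, the uniqueness argument at the start of the proof of Lemma \ref{maindistancelemma} gives $\mathcal{P}_{1}=\mathcal{P}_{2}$ up to a null set, hence equal as closed polygons, so the conclusion would still hold. As a byproduct, item 2 of Lemma \ref{same_number_result} gives the vertex estimate $\left|x_{1}^{i}-x_{2}^{i}\right|\leq C\left|\log\rho\right|^{-\alpha}$, which is the smallness used later in the paper.
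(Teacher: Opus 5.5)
Your proposal is correct and is the paper's argument: the paper obtains this theorem by chaining Lemma \ref{maindistancelemma}, Lemma \ref{first_distance_estimate} and Lemma \ref{same_number_result}, exactly as you do. Your explicit choice $\delta=\min\{\rho_{*},\exp(-(C_{1}C_{2}/\delta_{\mathrm{geom}})^{1/\alpha})\}$ simply spells out the threshold that the paper leaves implicit.
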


\section{Calculation of the derivative}
    In Theorem \ref{final_same_number_result}, we have shown that if the difference between two Dirichlet-to-Neumann maps is small enough, then the polygons have the same number $N\le N_{0}$ of vertices, with corresponding vertices $x_{1}^{i},x_{2}^{i}$. A deformation between $\mathcal{P}_{1}$ and $\mathcal{P}_{2}$ is then defined by the affine map on each vertex, and this local deformation extends to the whole domain $\Omega$. We have the following geometry extension lemma (See the proof of Theorem 4.1 in \cite{hanke2024shape})
    
    \begin{lemma}\label{extension_of_affine_map}
        Given two polygons $\mathcal{P}_{1},\mathcal{P}_{2}\in\mathcal{A}$ with the same number of vertices. Let $V$:$\partial\mathcal{P}_{1}\to\mathbb R^{2}$ be the continuous piecewise-affine vector field defined by $V\left(x_{1}^{i}\right)=x_{2}^{i}-x_{1}^{i}$, where $x_{1}^{i}$ and $x_{2}^{i}$ represent the $i$-th vertices of $\mathcal{P}_{1}$ and $\mathcal{P}_{2}$ respectively (so that $\mathrm{Id}+V$ maps $\partial\mathcal P_1$ onto $\partial\mathcal P_2$). Then $V$ can be extended to $h\in W_{c}^{1,\infty}\left(\Omega\right)$ such that $h|_{\partial\mathcal{P}_1}=V$ and support$\left(h\right)\subset\Omega$. Moreover, there exists a constant $C>0$ depending only on the prior information $PI$ such that
        \begin{equation}
            \left\Vert h\right\Vert _{W^{1,\infty}\left(\Omega\right)}\leq C\max_{i}\left|x_{1}^{i}-x_{2}^{i}\right|\leq C\sum_{i}\left|x_{1}^{i}-x_{2}^{i}\right|,
        \end{equation}
        and, for every $t\in\left[0,1\right]$ with $t\left\Vert h'\right\Vert _{L^{\infty}\left(\Omega\right)}\leq\frac12$, the map $\Phi_{ht}\left(x\right)=x+th\left(x\right)$ is a bi-Lipschitz homeomorphism of $\Omega$ onto itself with $\Phi_{ht}=\text{id}$ near $\partial\Omega$.
    \end{lemma}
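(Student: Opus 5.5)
The plan is to build $h$ explicitly in three moves: first bound the Lipschitz constant of $V$ on $\partial\mathcal P_1$, then extend it to $\mathbb R^2$ with a comparable constant, then cut it off away from $\partial\Omega$. Throughout, set $\varepsilon:=\max_i\left|x_1^i-x_2^i\right|$.

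\textbf{Step 1 (Lipschitz bound for $V$ on $\partial\mathcal P_1$).} On each side $[x_1^i,x_1^{i+1}]$ the field $V$ is affine. Its endpoint values are bounded by $\varepsilon$ and the side has length $\ge d_0$, so $|\partial_\tau V|\le 2\varepsilon/d_0$ along the side. For two arbitrary points $p,q\in\partial\mathcal P_1$ I split into cases.
\begin{itemize}
\item If $|p-q|\ge d_*$, with $d_*$ from Lemma \ref{edge separation}, then $|V(p)-V(q)|\le 2\varepsilon\le (2\varepsilon/d_*)|p-q|$.
\item If $|p-q|<d_*$, Lemma \ref{edge separation} forces $p,q$ to lie on the same side or on adjacent sides.
\begin{itemize}
\item On the same side, the previous bound along the side applies directly.
\item On adjacent sides meeting at a vertex $v$, the angle bounds of Definition \ref{def1} ($\alpha_0\le\alpha_i\le 2\pi-\alpha_0$) give an elementary triangle estimate $|p-v|+|v-q|\le |p-q|/\sin(\alpha_0/2)$. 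Hence $|V(p)-V(q)|\le \frac{2\varepsilon}{d_0\sin(\alpha_0/2)}|p-q|$.
\end{itemize}
\end{itemize}
Altogether $\operatorname{Lip}(V)\le C\varepsilon$ with $C=C(PI)$ and $\|V\|_\infty\le\varepsilon$. I expect this step to be the main obstacle, since it is the only place where the geometry of the class $\mathcal A$ genuinely enters. The delicate point is the adjacent-side case near a vertex, which needs the angle lower bound; the separation lemma is what rules out nearly touching non-adjacent sides.

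\textbf{Step 2 (extension and cutoff).} I extend each component of $V$ to $\mathbb R^2$ by the McShane formula, which preserves the Lipschitz constant. I then truncate each component to $[-\varepsilon,\varepsilon]$; truncation is $1$-Lipschitz and does not change the values on $\partial\mathcal P_1$. Call the result $\tilde V$, so $\|\tilde V\|_{W^{1,\infty}}\le C\varepsilon$.

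Next take a cutoff $\chi\in C_c^\infty$ with $\chi\equiv1$ on $\mathcal P_1$, $\operatorname{supp}\chi\subset\{x:\operatorname{dist}(x,\mathcal P_1)<d_0/2\}$ and $|\nabla\chi|\le 4/d_0$. Because $\operatorname{dist}(\mathcal P_1,\partial\Omega)>d_0$, this support is a compact set $K\subset\Omega$.

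Put $h:=\chi\tilde V$. Then $h|_{\partial\mathcal P_1}=V$, $\operatorname{supp}h\subset K\subset\Omega$, and
\[
\|h\|_{W^{1,\infty}}\le \|\tilde V\|_{W^{1,\infty}}+\frac{4}{d_0}\|\tilde V\|_\infty\le C\varepsilon\le C\sum_i\left|x_1^i-x_2^i\right| .
\]

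\textbf{Step 3 ($\Phi_{ht}$ is a bi-Lipschitz homeomorphism).} Extend $h$ by zero to $\mathbb R^2$. If $t\|h'\|_\infty\le\frac12$, then
\[
\tfrac12|x-y|\le|\Phi_{ht}(x)-\Phi_{ht}(y)|\le\tfrac32|x-y| ,
\]
so $\Phi_{ht}$ is injective and bi-Lipschitz.

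To get surjectivity onto $\mathbb R^2$, note that $\Phi_{ht}$ is continuous, injective and proper (it equals the identity outside $K$). By invariance of domain its image is open, and by properness its image is closed. Since $\mathbb R^2$ is connected, $\Phi_{ht}(\mathbb R^2)=\mathbb R^2$.

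Finally, $\Phi_{ht}$ fixes every point of $\mathbb R^2\setminus K$. Bijectivity then gives $\Phi_{ht}(K)=K$, and therefore $\Phi_{ht}(\Omega)=\Omega$. In particular $\Phi_{ht}=\mathrm{id}$ on the neighborhood $\Omega\setminus K$ of $\partial\Omega$, as claimed.
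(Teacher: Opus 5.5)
Your proof is correct. Its skeleton is shared with the paper: a Lipschitz bound on $\partial\mathcal P_1$, a McShane extension, truncation, localization near $\partial\mathcal P_1$, and finally the $\frac12$-injectivity estimate. The construction itself, however, is organized differently.

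The paper never estimates $\operatorname{Lip}V$ directly. Instead it:
\begin{itemize}
\item writes $V=\sum_iV_i\lambda_i$ with boundary hat functions $\lambda_i$;
\item bounds $\operatorname{Lip}\lambda_i$ through the graph charts of Definition \ref{def1}: points closer than $r_c$ lie in one chart, where the connecting boundary arc has length at most $\sqrt{1+K_0^2}\,|p-q|$;
\item extends each scalar hat by McShane from $\partial\mathcal P_1\cup(\mathbb R^2\setminus U_\eta)$, so that vanishing outside the band is built into the extension, and truncates to $[0,1]$;
\item sets $h=\sum_iV_i\widehat\lambda_i$.
\end{itemize}
You instead bound $\operatorname{Lip}V$ by a path argument, using Lemma \ref{edge separation} and the estimate $|p-v|+|v-q|\le|p-q|/\sin(\alpha_0/2)$ at a shared vertex. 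You then extend the vector field itself and localize with a separate cutoff.

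Your version has two advantages. It gives the $\max_i$ bound directly, with a constant independent of $N$. Your surjectivity argument is also more complete than the paper's one-line appeal to properness: you extend $h$ by zero to $\mathbb R^2$ (so the Lipschitz bound needs no convexity of $\Omega$), use invariance of domain plus closedness of the image, and conclude $\Phi_{ht}(K)=K$.

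What the paper's hat construction buys is linearity. The map $W\mapsto\sum_iW_i\widehat\lambda_i$ is a linear operator $\mathcal E_{\mathcal P}$ with $\|\mathcal E_{\mathcal P}(W)-\mathcal E_{\mathcal P}(W')\|_{W^{1,\infty}}\le C\sum_i|W_i-W_i'|$. This is exactly what Lemma \ref{continuous extension operator} reuses, through the maps $\mathrm{Id}+\mathcal E_{\mathcal P}(\delta^n)$ and the convergence $\mathcal E_{\mathcal P}(W^{(n)})\to\mathcal E_{\mathcal P}(W)$, in the continuity step of Theorem \ref{lowerbound result}. Your McShane-plus-truncation extension of $V$ depends nonlinearly on the vertex displacements and has no evident $W^{1,\infty}$-continuity in them. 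For that later purpose you would apply your argument to each scalar hat $\lambda_i$ rather than to $V$, which goes through with only notational changes.
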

    \begin{proof}
        The construction works for every admissible polygon, with constants depending only on $PI$. Write $V_i:=x_2^i-x_1^i$, and let $\lambda_i:\partial\mathcal P_1\to\left[0,1\right]$ be the continuous piecewise-affine hat function with $\lambda_i\left(x_1^j\right)=\delta_{ij}$: $\lambda_i$ is affine on the two sides adjacent to $x_1^i$, decreasing from $1$ at $x_1^i$ to $0$ at the adjacent vertices, and vanishes on the remaining sides. Then $V=\sum_{i=1}^{N}V_i\lambda_i$ on $\partial\mathcal P_1$, and along the boundary $\left\lvert\lambda_i\right\rvert\le1$ with $\left\lvert\frac{d}{ds}\lambda_i\right\rvert\le\frac1{d_0}$ (the sides have length at least $d_0$). Each $\lambda_i$ is Lipschitz on $\partial\mathcal P_1$ with a constant controlled by $PI$ only: if $p,q\in\partial\mathcal P_1$ with $\left\lvert p-q\right\rvert<r_c:=\frac14\min\left\{r_0,K_0r_0\right\}$, then $p$ and $q$ lie in one common graph chart, the boundary arc between them has length at most $\sqrt{1+K_0^2}\left\lvert p-q\right\rvert$, and $\left\lvert\lambda_i\left(p\right)-\lambda_i\left(q\right)\right\rvert\le\frac1{d_0}\sqrt{1+K_0^2}\left\lvert p-q\right\rvert$; if $\left\lvert p-q\right\rvert\ge r_c$, then $\left\lvert\lambda_i\left(p\right)-\lambda_i\left(q\right)\right\rvert\le1\le r_c^{-1}\left\lvert p-q\right\rvert$. Set $\eta:=\frac{d_0}{4}$ and $U_\eta:=\left\{x\in\mathbb R^2:\ \mathrm{dist}\left(x,\partial\mathcal P_1\right)<\eta\right\}$, define $f_i$ on $A:=\partial\mathcal P_1\cup\left(\mathbb R^2\setminus U_\eta\right)$ by $f_i=\lambda_i$ on $\partial\mathcal P_1$ and $f_i=0$ on $\mathbb R^2\setminus U_\eta$, and let
        \[
            \widehat\lambda_i\left(x\right):=\min\left\{1,\ \max\left\{0,\ \inf_{z\in A}\left(f_i\left(z\right)+L_0\left\lvert x-z\right\rvert\right)\right\}\right\},
            \qquad
            L_0:=\max\left\{\frac{\sqrt{1+K_0^2}}{d_0},\ \frac1{r_c},\ \frac4{d_0}\right\}.
        \]
        The two parts of $A$ stay at distance $\eta$, so $f_i$ is $L_0$-Lipschitz on $A$, and the McShane extension above is $L_0$-Lipschitz on $\mathbb R^2$ with $\widehat\lambda_i=f_i$ on $A$; truncation preserves this, so $\widehat\lambda_i=\lambda_i$ on $\partial\mathcal P_1$, $\widehat\lambda_i=0$ on $\mathbb R^2\setminus U_\eta$, and $\left\lVert\widehat\lambda_i\right\rVert_{W^{1,\infty}\left(\mathbb R^2\right)}\le\max\left\{1,L_0\right\}$. Since $\mathrm{dist}\left(\partial\mathcal P_1,\partial\Omega\right)\ge d_0>\eta$, the set $\overline{U_\eta}$ is compactly contained in $\Omega$. Define
        \[
            h:=\sum_{i=1}^{N}V_i\,\widehat\lambda_i .
        \]
        Then $h\in W_c^{1,\infty}\left(\Omega;\mathbb R^2\right)$, $h|_{\partial\mathcal P_1}=\sum_iV_i\lambda_i=V$, and $\left\lVert h\right\rVert_{W^{1,\infty}}\le\sum_i\left\lvert V_i\right\rvert\left\lVert\widehat\lambda_i\right\rVert_{W^{1,\infty}}\le C\sum_i\left\lvert x_1^i-x_2^i\right\rvert$ with $C=C\left(PI\right)$. Finally, if $t\left\lVert h'\right\rVert_{L^\infty}\le\frac12$, then $\Phi_{ht}=\mathrm{Id}+th$ is injective, since $\left\lvert\Phi_{ht}\left(x\right)-\Phi_{ht}\left(y\right)\right\rvert\ge\frac12\left\lvert x-y\right\rvert$; being equal to the identity on $\partial\Omega$ and proper, it maps $\Omega$ onto itself, hence it is a bi-Lipschitz homeomorphism of $\Omega$. The change-of-variables formula therefore applies to $\Phi_{ht}$.
    \end{proof}
    
 \begin{figure}[h]
    \centering
    \includegraphics[width=0.8\textwidth]{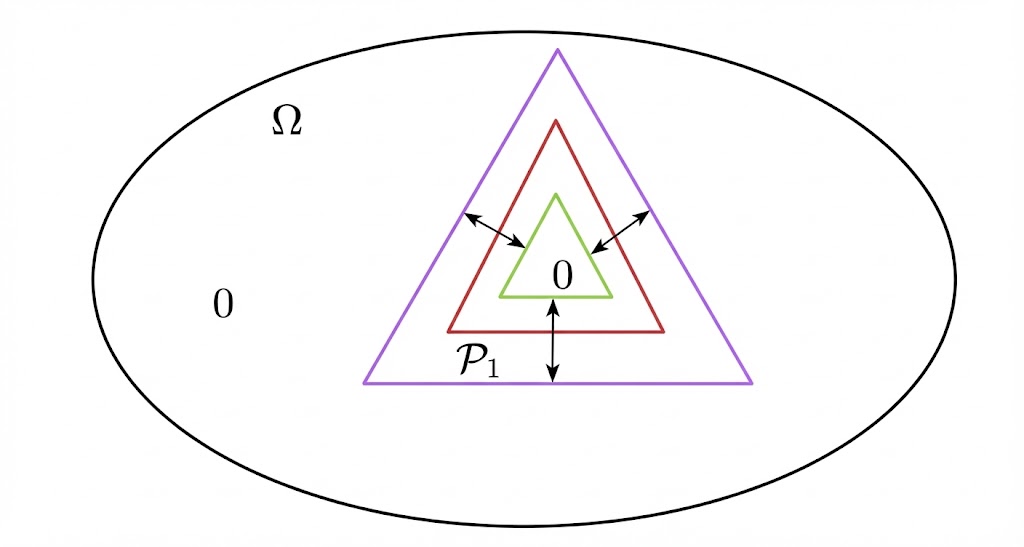}
    \caption{The vertex field $V$ on $\partial\mathcal{P}_{1}$ is extended inside and outside the inclusion by linear combination of the boundary hat functions, and set to zero beyond the band between the two ``shape-like'' polygons; the band and its boundary curves are schematic.}
    \label{fig:explainimg}
 \end{figure}
      
    The map $h$ of Lemma \ref{extension_of_affine_map} defines a deformation path between the two configurations. Throughout the rest of this section we assume
    \[
        \left\lVert Dh\right\rVert_{L^{\infty}\left(\Omega\right)}\le\frac12 ,
    \]
    which is satisfied in all applications (see Section 6). Under this assumption, for every $t\in\left[0,1\right]$ the map $\Phi_{ht}:\Omega\to\Omega$, $x\mapsto x+th\left(x\right)$, is a bi-Lipschitz homeomorphism of $\Omega$ onto itself that is the identity near $\partial\Omega$ (Lemma \ref{extension_of_affine_map}); maps of this form are Lipschitz with Lipschitz inverse, hence differentiable almost everywhere, and the change-of-variables formula applies to them. Since $h|_{\partial\mathcal P_1}$ is affine on each side and takes the value $x_{2}^{i}-x_{1}^{i}$ at the vertex $x_1^i$, the map $\Phi_{h1}$ sends each side of $\partial\mathcal P_1$ onto the corresponding side of $\partial\mathcal P_2$; being the identity near $\partial\Omega$, it sends the region bounded by $\partial\mathcal P_1$, namely $\mathcal P_1$, onto the region bounded by $\partial\mathcal P_2$, namely $\mathcal P_2$. We write
    \[
        \mathcal P(t):=\Phi_{ht}\left(\mathcal P_1\right),\qquad \kappa:=k_2-k_1,\qquad k(t):=k_1+t\kappa ,
    \]
    so that $\mathcal P(0)=\mathcal P_1$, $\mathcal P(1)=\mathcal P_2$, $k(0)=k_1$ and $k(1)=k_2$. Since $k_1,k_2\in\left(\frac1{\lambda_0},\lambda_0\right)$, convexity gives $\frac1{\lambda_0}<k(t)<\lambda_0$ for every $t\in\left[0,1\right]$, and because $\Phi_{ht}$ is the identity near $\partial\Omega$, $\gamma_{k(t)}^{\mathcal P(t)}=1$ in a neighborhood of $\partial\Omega$. The conductivities $\gamma_{k(t)}^{\mathcal P(t)}$ are therefore uniformly elliptic and uniformly bounded, and all estimates below apply uniformly in $t\in\left[0,1\right]$.

    At the base point $\left(\mathcal P_1,k_1\right)$ the relevant Dirichlet problems are, for any $\phi,\psi\in H^{1/2}\left(\partial\Omega\right)$,
    \begin{equation}\label{dirichlet_boundary_problem_phi}
        \begin{cases}
            \mathrm{div}\left(\gamma_{k_{1}}^{\mathcal P_1}\nabla u_0^\varphi\right)=0 & \text{in }\Omega\\
            u_0^\varphi=\phi & \text{on }\partial\Omega,
        \end{cases}
    \end{equation}
    and
    \begin{equation}\label{Dirichlet_boundary_problem_psi}
        \begin{cases}
            \mathrm{div}\left(\gamma_{k_{1}}^{\mathcal P_1}\nabla u_0^\psi\right)=0 & \text{in }\Omega\\
            u_0^\psi=\psi & \text{on }\partial\Omega .
        \end{cases}
    \end{equation}

    For any $t\in\left[0,1\right]$ and $\forall \phi,\psi\in H^{1/2}\left(\partial\Omega\right)$, let
    \begin{align}\label{functionalform}
        F\left(t,\phi,\psi\right) & =\left\langle \Lambda_{k(t)}^{\mathcal{P}(t)}\left(\phi\right),\psi\right\rangle _{H^{-1/2}\left(\partial\Omega\right)\times H^{1/2}\left(\partial\Omega\right)}
    \end{align}

    We derive the first variation of $F$ at $t=0$ using the pull-back formulation on the reference configuration. We have the following main theorem:

    \begin{theorem}\label{mainderivativestheorem}
        Let $\left(\mathcal P_1,k_1\right)\in\mathcal A\times\mathcal K$ be an admissible base point, let $\kappa\in\mathbb R$ satisfy $\frac1{\lambda_0}\le k_1+t\kappa\le\lambda_0$ for every $t\in\left[0,1\right]$, and let $h\in W_c^{1,\infty}\left(\Omega;\mathbb R^2\right)$ be any field with $\left\lVert Dh\right\rVert_{L^{\infty}\left(\Omega\right)}\le\frac12$. Write $\mathcal P(t):=\left(\mathrm{Id}+th\right)\left(\mathcal P_1\right)$, $k(t):=k_1+t\kappa$ and $\gamma_{k(t)}^{\mathcal P(t)}:=1+\left(k(t)-1\right)\chi_{\mathcal P(t)}$. For every $t\in\left[0,1\right]$, the map $\Phi_{ht}$ is bi-Lipschitz and equals the identity near $\partial\Omega$. Together with $\lambda_0^{-1}\le k(t)\le\lambda_0$, this ensures that $\gamma_{k(t)}^{\mathcal P(t)}$ is uniformly elliptic and equals $1$ near $\partial\Omega$. The corresponding Dirichlet-to-Neumann map is therefore well defined. For $\phi,\psi\in H^{1/2}\left(\partial\Omega\right)$ and $t\in\left[0,1\right]$ set $F\left(t,\phi,\psi\right):=\left\langle \Lambda_{k(t)}^{\mathcal{P}(t)}\phi,\psi\right\rangle_{H^{-1/2}\left(\partial\Omega\right)\times H^{1/2}\left(\partial\Omega\right)}$. Then $t\mapsto F\left(t,\phi,\psi\right)$ is differentiable on $\left[0,1\right]$, with
        \begin{equation}\label{derivatives_at_zero}
            \frac{dF}{dt}\Big|_{t=0}
            =\left(k_{1}-1\right)\int_{\partial\mathcal{P}_{1}}\left(h\cdot\nu\right)\nabla u_0^\varphi\cdot\left(M\nabla u_0^\psi\right)\mathrm ds
            +\kappa\int_{\mathcal{P}_{1}}\nabla u_0^\varphi\cdot\nabla u_0^\psi\,\mathrm dx,
        \end{equation}
        where $u_0^\varphi$ and $u_0^\psi$ are the base point solutions of \eqref{dirichlet_boundary_problem_phi} and \eqref{Dirichlet_boundary_problem_psi}, and the gradient traces in the first integral are taken on the exterior side $\Omega\setminus\overline{\mathcal P_1}$ of $\partial\mathcal{P}_{1}$.
        The vector fields $\tau$ and $\nu$ are the tangential and outer normal unit fields on $\partial\mathcal{P}_{1}$, and $M$ is the symmetric $2\times2$ matrix defined on $\partial\mathcal{P}_{1}$ by $M:=\tau\otimes\tau+\frac1{k_1}\nu\otimes\nu$, with eigenvalues $1$ and $\frac{1}{k_{1}}$ in the directions $\tau$ and $\nu$.
    
        Moreover, there exists some constant $C>0$ which only depends on the prior information $PI$ such that, for every $t\in\left[0,1\right]$,
        \begin{equation}\label{continuity_of_derivatives}
            \left|\frac{dF}{dt}\left(t,\phi,\psi\right)-\frac{dF}{dt}\left(0,\phi,\psi\right)\right|\leq C\left(\left\lVert h\right\rVert _{W^{1,\infty}\left(\Omega\right)}+\left\lvert\kappa\right\rvert\right)^{2}t\,\left\lVert \phi\right\rVert _{H^{1/2}\left(\partial\Omega\right)}\left\lVert \psi\right\rVert _{H^{1/2}\left(\partial\Omega\right)}
        \end{equation}
    \end{theorem}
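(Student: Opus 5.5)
Throughout, write $\gamma_t:=\gamma_{k(t)}^{\mathcal P(t)}$ and $u_t^\varphi,u_t^\psi$ for the solutions with conductivity $\gamma_t$. The approach is a pull-back to the reference configuration, so that the moving interface becomes a fixed one and only the coefficients depend on $t$.

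\emph{Step 1 (pull-back).} Set $w_t^\varphi:=u_t^\varphi\circ\Phi_{ht}$. Since $\Phi_{ht}$ is bi-Lipschitz and equals the identity near $\partial\Omega$ (Lemma \ref{extension_of_affine_map}), $w_t^\varphi$ lies in $H^1(\Omega)$ with trace $\phi$. It solves $\mathrm{div}(A_t\nabla w_t^\varphi)=0$, where
\[
A_t:=\gamma_{k(t)}^{\mathcal P_1}\,J_t\,(D\Phi_{ht})^{-1}(D\Phi_{ht})^{-T},\qquad J_t:=\det D\Phi_{ht}.
\]
The characteristic function is transported exactly, because $\chi_{\mathcal P(t)}\circ\Phi_{ht}=\chi_{\mathcal P_1}$. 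By symmetry of the Dirichlet form, $F(t,\phi,\psi)=\int_\Omega A_t\nabla w_t^\varphi\cdot\nabla w_t^\psi$.

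\emph{Step 2 (first variation).} The map $t\mapsto A_t$ is a polynomial/rational expression in $t$, $Dh$ and $\kappa$. It is smooth in $t$ with values in $L^\infty$, with
\[
\|\partial_tA_t\|_\infty\le C(\|h\|_{W^{1,\infty}}+|\kappa|),\qquad \|\partial_t^2A_t\|_\infty\le C(\|h\|_{W^{1,\infty}}+|\kappa|)^2,
\]
and it is uniformly elliptic because $\|Dh\|\le\frac12$. The usual argument for coefficient-differentiability of the DtN form (Céa-type identity plus Lax–Milgram) then gives
\[
\frac{dF}{dt}(t)=\int_\Omega \partial_tA_t\,\nabla w_t^\varphi\cdot\nabla w_t^\psi,
\]
with no derivatives of $w_t$ appearing, since $w_t$ is stationary for the energy. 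At $t=0$,
\[
\partial_tA_t|_{0}=\kappa\chi_{\mathcal P_1}I+\gamma_{k_1}^{\mathcal P_1}\left(\mathrm{div}h\,I-Dh-Dh^T\right).
\]
The $\kappa$ term is already the second term of \eqref{derivatives_at_zero}.

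\emph{Step 3 (domain integral to boundary integral).} This step converts $\int_\Omega\gamma_{k_1}^{\mathcal P_1}(\mathrm{div}h\,I-Dh-Dh^T)\nabla u_0^\varphi\cdot\nabla u_0^\psi$ into the boundary integral, and I expect it to be the main obstacle. Near the vertices of the polygon, $\nabla u_0$ has only $H^1$ regularity, with singularities of the type $r^{\beta-1}$. I would therefore excise small balls $B_\epsilon(x_1^i)$ at the vertices. Away from them, $u_0$ is smooth up to each side from both sides (transmission regularity across straight edges). On each of $\mathcal P_1\setminus\cup B_\epsilon$ and $(\Omega\setminus\mathcal P_1)\setminus\cup B_\epsilon$ I would use the Rellich-type identity
\[
(\mathrm{div}h\,I-Dh-Dh^T)\nabla u\cdot\nabla v=\mathrm{div}\big((\nabla u\cdot\nabla v)h-(h\cdot\nabla u)\nabla v-(h\cdot\nabla v)\nabla u\big),
\]
valid for harmonic $u,v$. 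Integrating and summing the two sides produces jump terms on the edges. The transmission conditions are: the tangential derivative is continuous, and $k_1\partial_\nu u^-=\partial_\nu u^+$. Using them, the jumps collapse to
\[
(k_1-1)\int(h\cdot\nu)\left(\partial_\tau u\,\partial_\tau v+\tfrac1{k_1}\partial_\nu u^+\partial_\nu v^+\right),
\]
with exterior traces, which is exactly $(k_1-1)\int(h\cdot\nu)\nabla u\cdot M\nabla v$. The contributions on $\partial B_\epsilon$ vanish as $\epsilon\to0$ along a subsequence, because $|\nabla u|^2\in L^1$ implies $\epsilon\int_{\partial B_\epsilon}|\nabla u|^2\to0$ for a.e. small $\epsilon$. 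The same vanishing shows the boundary integral is absolutely convergent, since $|\nabla u|\lesssim r^{\beta-1}$ with $\beta>1/2$ by the angle bounds. The sign convention is to be fixed with $\nu$ pointing out of $\mathcal P_1$.

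\emph{Step 4 (Lipschitz bound on the derivative).} I would write
\[
\frac{dF}{dt}(t)-\frac{dF}{dt}(0)=\int(\partial_tA_t-\partial_tA_0)\nabla w_t^\varphi\cdot\nabla w_t^\psi+\int\partial_tA_0\left(\nabla w_t^\varphi\cdot\nabla w_t^\psi-\nabla u_0^\varphi\cdot\nabla u_0^\psi\right).
\]
The first term is at most $Ct(\|h\|+|\kappa|)^2\|\phi\|\|\psi\|$, using $\|\partial_t^2A\|_\infty$ and the energy estimates $\|\nabla w\|_{L^2}\le C\|\phi\|_{H^{1/2}}$. For the second term, $w_t-u_0$ solves $\mathrm{div}(A_t\nabla(w_t-u_0))=-\mathrm{div}((A_t-A_0)\nabla u_0)$ with zero trace. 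Hence $\|\nabla(w_t-u_0)\|_{L^2}\le Ct(\|h\|+|\kappa|)\|\phi\|$, and multiplying by $\|\partial_tA_0\|_\infty\le C(\|h\|+|\kappa|)$ gives \eqref{continuity_of_derivatives}. All constants depend only on ellipticity, that is on $\lambda_0$ and $\|Dh\|\le\frac12$, and on $\Omega$, hence only on $PI$.
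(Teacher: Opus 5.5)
Your proposal is correct and follows the paper's proof almost step for step. The paper also pulls back by $\Phi_{ht}$; its $C_t=a_tA_t$ is your $A_t$. It proves that $t\mapsto U_t^f$ is differentiable in $H^1$ by difference quotients, obtains the Hadamard form $F'(t)=\int_\Omega C_t'\nabla U_t^\phi\cdot\nabla U_t^\psi\,\mathrm dx$, and gets \eqref{continuity_of_derivatives} from your splitting together with $\|C_t'-C_0'\|_{L^\infty}\le Ct(\|h\|_{W^{1,\infty}}+|\kappa|)^2$ and $\|\nabla(U_t^f-U_0^f)\|_{L^2}\le Ct(\|h\|_{W^{1,\infty}}+|\kappa|)\|f\|_{H^{1/2}}$.

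The one genuine difference is your Step 3. The paper does not derive the identity $\int_\Omega\gamma_0\mathcal A_h\nabla u_0^\varphi\cdot\nabla u_0^\psi\,\mathrm dx=(k_1-1)\int_{\partial\mathcal P_1}(h\cdot\nu)\nabla u_0^\varphi\cdot M\nabla u_0^\psi\,\mathrm ds$; it cites it from \cite{BerMicPerSan18} and \cite{BerFraVes17}. You prove it directly, using the Rellich identity on each phase and excising the vertices. Your jump computation is right: the $h_\tau$ terms cancel by flux continuity, and $k_1u^i_\nu v^i_\nu=u^e_\nu v^e_\nu/k_1$ produces $M$. This route has the merit of showing exactly which regularity the identity needs, namely the two-phase corner exponent $\beta>1/2$, which the paper only invokes later, in Section 5.

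One justification needs repair. The flux through $\partial B_\epsilon(x_1^i)$ is bounded by $C\|h\|_{L^\infty}\int_{\partial B_\epsilon}|\nabla u||\nabla v|\,\mathrm ds$ with no extra factor $\epsilon$, because $h$ need not vanish at the vertices (in the application, $h(x_1^i)=x_2^i-x_1^i$). Knowing only $\nabla u\in L^2$ gives $\liminf_{\epsilon\to0}\epsilon\int_{\partial B_\epsilon}|\nabla u|^2\,\mathrm ds=0$, which is not enough. Use instead the corner bound $|\nabla u|,|\nabla v|\lesssim r^{\beta-1}$ with $\beta>1/2$, which you state in the next sentence. It gives $\int_{\partial B_\epsilon}|\nabla u||\nabla v|\,\mathrm ds\lesssim\epsilon^{2\beta-1}\to0$ for every $\epsilon\to0$, not just along a subsequence. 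The same bound also gives the absolute convergence of the edge integral.
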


    \begin{proof}
        We work in the reference configuration and use the pull-back (transported) formulation. Set $\gamma_0:=\gamma_{k_1}^{\mathcal P_1}$ and
        \[
            a_t:=1+\left(k_1+t\kappa-1\right)\chi_{\mathcal P_1}=\gamma_0+t\kappa\,\chi_{\mathcal P_1},\qquad A_t:=D\Phi_{ht}^{-1}D\Phi_{ht}^{-T}\det D\Phi_{ht},\qquad C_t:=a_tA_t .
        \]
        Let $u_t^f$ be the solution of $\mathrm{div}\left(\gamma_{k(t)}^{\mathcal P(t)}\nabla u_t^f\right)=0$ in $\Omega$ with $u_t^f=f$ on $\partial\Omega$, and set $U_t^f:=u_t^f\circ\Phi_{ht}$. Since $\gamma_{k(t)}^{\mathcal P(t)}\left(\Phi_{ht}\left(y\right)\right)=a_t\left(y\right)$, the change of variables $x=\Phi_{ht}\left(y\right)$ gives the transported equation
        \begin{equation}\label{eq:transported weak form}
            \int_\Omega C_t\,\nabla U_t^f\cdot\nabla w\,\mathrm dx=0\qquad\forall w\in H_0^1\left(\Omega\right),
        \end{equation}
        and $U_t^f=f$ on $\partial\Omega$ because $\Phi_{ht}=\mathrm{id}$ near $\partial\Omega$. Moreover,
        \begin{equation}\label{eq:pullback functional}
            F\left(t,\phi,\psi\right)=\int_\Omega C_t\,\nabla U_t^\phi\cdot\nabla U_t^\psi\,\mathrm dx :
        \end{equation}
        indeed, writing $A_t\nabla U_t=D\Phi_{ht}^{-1}\det D\Phi_{ht}\,\nabla u_t\left(\Phi_{ht}\right)$ and $\nabla U_t=D\Phi_{ht}^T\nabla u_t\left(\Phi_{ht}\right)$, the right hand side equals $\int_\Omega\gamma_{k(t)}^{\mathcal P(t)}\nabla u_t^\phi\cdot\nabla u_t^\psi\,\mathrm dx$, which is $\left\langle\Lambda_{k(t)}^{\mathcal P(t)}\phi,\psi\right\rangle$ by Green's identity, since both conductivities equal $1$ near $\partial\Omega$.

        We next record the differentiability of the transported solution map, which justifies differentiating \eqref{eq:pullback functional}.

        \begin{lemma}\label{solution differentiability}
            For every $f\in H^{1/2}\left(\partial\Omega\right)$, the map $t\mapsto U_t^f$ is differentiable on $\left(0,1\right)$ into $H^1\left(\Omega\right)$, with one-sided derivatives at $t=0$ and $t=1$. Its derivative $\dot U_t^f\in H_0^1\left(\Omega\right)$ is the unique solution of
            \begin{equation}\label{eq:derivative equation}
                \int_\Omega C_t\,\nabla\dot U_t^f\cdot\nabla w\,\mathrm dx=-\int_\Omega C_t'\,\nabla U_t^f\cdot\nabla w\,\mathrm dx\qquad\forall w\in H_0^1\left(\Omega\right) .
            \end{equation}
        \end{lemma}
        \begin{proof}
            For $s\ne0$ set $Q_s:=\frac{U_{t+s}^f-U_t^f}{s}\in H_0^1\left(\Omega\right)$ (the boundary data are independent of $t$). Subtracting \eqref{eq:transported weak form} at $t+s$ and at $t$ gives
            \[
                \int_\Omega C_t\,\nabla Q_s\cdot\nabla w\,\mathrm dx=-\int_\Omega\frac{C_{t+s}-C_t}{s}\,\nabla U_{t+s}^f\cdot\nabla w\,\mathrm dx\qquad\forall w\in H_0^1\left(\Omega\right) .
            \]
            Since $C_t$ is uniformly coercive, $\left\lVert\frac{C_{t+s}-C_t}{s}\right\rVert_{L^\infty}\le C\left(\left\lVert h\right\rVert_{W^{1,\infty}}+\left\lvert\kappa\right\rvert\right)$ and $\left\lVert\nabla U_{t+s}^f\right\rVert_{L^2}\le C\left\lVert f\right\rVert_{H^{1/2}}$, testing with $w=Q_s$ gives a bound on $\left\lVert Q_s\right\rVert_{H_0^1}$ that is uniform in $s$. Moreover $\frac{C_{t+s}-C_t}{s}\to C_t'$ in $L^\infty\left(\Omega\right)$, and, subtracting \eqref{eq:transported weak form} at $t+s$ and at $t$ and testing with $U_{t+s}^f-U_t^f$, the uniform coercivity of $C_t$ and $\left\lVert C_{t+s}-C_t\right\rVert_{L^\infty}\le C\left\lvert s\right\rvert\left(\left\lVert h\right\rVert_{W^{1,\infty}}+\left\lvert\kappa\right\rvert\right)$ give
            \[
                \left\lVert U_{t+s}^f-U_t^f\right\rVert_{H^1}\le C\left\lvert s\right\rvert\left\lVert f\right\rVert_{H^{1/2}},
            \]
            so $U_{t+s}^f\to U_t^f$ in $H^1\left(\Omega\right)$ as $s\to0$. Consequently every accumulation point $\dot U$ of $Q_s$ in the weak topology of $H_0^1\left(\Omega\right)$ satisfies \eqref{eq:derivative equation}, and by the coercivity of $C_t$ the solution of \eqref{eq:derivative equation} is unique. Subtracting \eqref{eq:derivative equation} from the equation of $Q_s$ and testing with $w=e_s:=Q_s-\dot U_t^f$, we obtain, writing $\lambda>0$ for the uniform coercivity constant of $C_t$,
            \begin{equation}
                \begin{aligned}
                    \lambda\left\lVert\nabla e_s\right\rVert_{L^2}^{2}
                    \le\int_\Omega C_t\,\nabla e_s\cdot\nabla e_s\,\mathrm dx
                    \le\Bigg[
                    &\left\lVert\frac{C_{t+s}-C_t}{s}-C_t'\right\rVert_{L^\infty}\left\lVert\nabla U_{t+s}^f\right\rVert_{L^2}\\
                    &+\left\lVert C_t'\right\rVert_{L^\infty}\left\lVert\nabla\left(U_{t+s}^f-U_t^f\right)\right\rVert_{L^2}
                    \Bigg]\left\lVert\nabla e_s\right\rVert_{L^2},
                \end{aligned}
            \end{equation}
            where the bracketed factor is $o\left(1\right)$ as $s\to0$: $\frac{C_{t+s}-C_t}{s}\to C_t'$ in $L^\infty$, $\left\lVert\nabla U_{t+s}^f\right\rVert_{L^2}\le C\left\lVert f\right\rVert_{H^{1/2}}$ is bounded, and $\left\lVert\nabla\left(U_{t+s}^f-U_t^f\right)\right\rVert_{L^2}\to0$ by the estimate above. Dividing by $\left\lVert\nabla e_s\right\rVert_{L^2}$ (if nonzero; otherwise there is nothing to prove) gives $e_s\to0$ strongly in $H^1\left(\Omega\right)$. Hence $Q_s\to\dot U_t^f$ strongly in $H^1\left(\Omega\right)$, which proves the differentiability of $t\mapsto U_t^f$ on $\left(0,1\right)$; the same argument at $t=0$ and $t=1$ with $s\to0^{\pm}$ gives the one-sided derivatives.
        \end{proof}

        Since the boundary data are independent of $t$, \eqref{eq:derivative equation} gives $\dot U_t^f\in H_0^1\left(\Omega\right)$ for every $t$. Differentiating \eqref{eq:pullback functional} and using \eqref{eq:transported weak form} with the test functions $\dot U_t^\psi$ and $\dot U_t^\phi$ (the matrix $C_t$ is symmetric), the two terms containing $\dot U_t$ vanish, and we obtain the Hadamard structure
        \begin{equation}\label{eq:Hadamard structure}
            \frac{dF}{dt}\left(t\right)=\int_\Omega C_t'\,\nabla U_t^\phi\cdot\nabla U_t^\psi\,\mathrm dx ,
        \end{equation}
        where $C_t'=\frac{d}{dt}C_t$. At $t=0$ we have
        \[
            A_0=I,\qquad A_0'=\mathcal A_h:=\left(\nabla\cdot h\right)I-h'-\left(h'\right)^T,\qquad C_0'=\kappa\,\chi_{\mathcal P_1}I+\gamma_0\,\mathcal A_h ,
        \]
        where the explicit formula
        \begin{equation}\label{equationofAt}
            A_t=\left(I+th'\right)^{-1}\left(I+t\left(h'\right)^T\right)^{-1}\left(1+t\nabla\cdot h+t^2\det h'\right)
        \end{equation}
        is used. Therefore
        \begin{equation}\label{eq:first variation distributed}
            \frac{dF}{dt}\Big|_{t=0}=\int_\Omega\gamma_0\nabla u_0^\varphi\cdot\left(\mathcal A_h\nabla u_0^\psi\right)\mathrm dx+\kappa\int_{\mathcal P_1}\nabla u_0^\varphi\cdot\nabla u_0^\psi\,\mathrm dx ,
        \end{equation}
        where $u_0^\varphi:=U_0^\varphi$ and $u_0^\psi:=U_0^\psi$ are the base point solutions of \eqref{dirichlet_boundary_problem_phi} and \eqref{Dirichlet_boundary_problem_psi}. Finally, by \cite{BerMicPerSan18} (Proof of Proposition 2.3; see also \cite{BerFraVes17}, Corollary 2.2),
        \begin{equation}\label{eq:relationshipbetweenAhandmatrix}
            \int_\Omega\gamma_0\nabla u_0^\varphi\cdot\left(\mathcal A_h\nabla u_0^\psi\right)\mathrm dx=\left(k_1-1\right)\int_{\partial\mathcal P_1}\left(h\cdot\nu\right)\nabla u_0^\varphi\cdot\left(M\nabla u_0^\psi\right)\mathrm ds ,
        \end{equation}
        with $M$ as in the statement, and \eqref{eq:first variation distributed} combined with \eqref{eq:relationshipbetweenAhandmatrix} yields \eqref{derivatives_at_zero}.

        It remains to prove the continuity estimate \eqref{continuity_of_derivatives}. Standard elliptic estimates applied to the transported problems give
        \begin{equation}\label{eq:stability of transported solutions}
            \left\lVert\nabla\left(U_t^f-U_0^f\right)\right\rVert_{L^2\left(\Omega\right)}\le Ct\left(\left\lVert h\right\rVert_{W^{1,\infty}}+\left\lvert\kappa\right\rvert\right)\left\lVert f\right\rVert_{H^{1/2}\left(\partial\Omega\right)},\qquad \left\lVert\nabla U_t^f\right\rVert_{L^2\left(\Omega\right)}\le C\left\lVert f\right\rVert_{H^{1/2}\left(\partial\Omega\right)},
        \end{equation}
        uniformly in $t\in\left[0,1\right]$: subtracting \eqref{eq:transported weak form} at $t$ and at $0$ and testing with $U_t^f-U_0^f\in H_0^1\left(\Omega\right)$, the leading coefficient $C_0$ is uniformly elliptic while $\left\lVert C_t-C_0\right\rVert_{L^\infty}\le Ct\left(\left\lVert h\right\rVert_{W^{1,\infty}}+\left\lvert\kappa\right\rvert\right)$. Moreover, from $C_t=\left(\gamma_0+t\kappa\,\chi_{\mathcal P_1}\right)A_t$ and \eqref{equationofAt} we get
        \begin{equation}\label{eq:lipschitz of Ct}
            \left\lVert C_t'-C_0'\right\rVert_{L^\infty}\le Ct\left(\left\lVert h\right\rVert_{W^{1,\infty}}^2+\left\lvert\kappa\right\rvert\left\lVert h\right\rVert_{W^{1,\infty}}\right),\qquad \left\lVert C_t'\right\rVert_{L^\infty}\le C\left(\left\lVert h\right\rVert_{W^{1,\infty}}+\left\lvert\kappa\right\rvert\right).
        \end{equation}
        By \eqref{eq:Hadamard structure},
        \[
            \frac{dF}{dt}\left(t\right)-\frac{dF}{dt}\left(0\right)=\int_\Omega\left(C_t'-C_0'\right)\nabla U_t^\phi\cdot\nabla U_t^\psi\,\mathrm dx+\int_\Omega C_0'\nabla\left(U_t^\phi-U_0^\phi\right)\cdot\nabla U_t^\psi\,\mathrm dx+\int_\Omega C_0'\nabla U_0^\phi\cdot\nabla\left(U_t^\psi-U_0^\psi\right)\mathrm dx .
        \]
        Combining this with \eqref{eq:stability of transported solutions} and \eqref{eq:lipschitz of Ct} yields
        \[
            \left|\frac{dF}{dt}\left(t\right)-\frac{dF}{dt}\left(0\right)\right|\le Ct\left(\left\lVert h\right\rVert_{W^{1,\infty}}+\left\lvert\kappa\right\rvert\right)^2\left\lVert\phi\right\rVert_{H^{1/2}\left(\partial\Omega\right)}\left\lVert\psi\right\rVert_{H^{1/2}\left(\partial\Omega\right)},
        \]
        which is \eqref{continuity_of_derivatives}.
    \end{proof}
    
\section{Lower bound of the derivative at $t=0$}
    In Theorem \ref{mainderivativestheorem}, we have shown that for the functional $F$ defined in \eqref{functionalform} for any Dirichlet boundary measurements $\phi$, $\psi$ and any $t\in \lbrack0,1\rbrack$, its derivative at $t=0$ is not only linear, but also continuous associated with the $H^{1/2}\left(\partial\Omega\right)$ norm of corresponding $\phi\text{ and } \psi$. In other words, we have obtained an upper bound for $\frac{dF}{dt}|_{t=0}$. In this section, we bound $\frac{dF}{dt}|_{t=0}$ from below.

    For a polygon $\mathcal P$ with vertices $x_1,\dots,x_N$, a vector $W=\left(W_1,\dots,W_N\right)\in\left(\mathbb R^2\right)^N$, $k\in\mathcal K$ and $\kappa\in\mathbb R$, let $V_W$ be the continuous piecewise-affine vector field on $\partial\mathcal P$ defined by $V_W\left(x_i\right)=W_i$, and set
    \begin{equation}\label{eq:Bdef}
        B_{\mathcal P,k}\left[W,\kappa\right]\left(\phi,\psi\right):=\left(k-1\right)\int_{\partial\mathcal P}\left(V_W\cdot\nu\right)\nabla u^{\phi,e}\cdot\left(M\nabla u^{\psi,e}\right)\mathrm ds+\kappa\int_{\mathcal P}\nabla u^\phi\cdot\nabla u^\psi\,\mathrm dx ,
    \end{equation}
    where $u^\phi$ and $u^\psi$ solve $\mathrm{div}\left(\gamma_{\mathcal P}^{k}\nabla u\right)=0$ in $\Omega$ with Dirichlet data $\phi$ and $\psi$ (the base point problems \eqref{dirichlet_boundary_problem_phi} and \eqref{Dirichlet_boundary_problem_psi}, written there for $\left(\mathcal P_1,k_1\right)$, with $\left(\mathcal P_1,k_1\right)$ replaced by $\left(\mathcal P,k\right)$), $u^{\phi,e},u^{\psi,e}$ denote their exterior traces on $\partial\mathcal P$ (the side of $\Omega\setminus\mathcal P$), and $M$ is the matrix of Theorem \ref{mainderivativestheorem}. By Theorem \ref{mainderivativestheorem}, for $\kappa=k_2-k_1$ and $W_i=x_2^i-x_1^i$ the derivative of $F$ equals
    \[
        \frac{d}{dt}F\left(t,\phi,\psi\right)\Big|_{t=0}=B_{\mathcal P_1,k_1}\left[W,\kappa\right]\left(\phi,\psi\right),
    \]
    because $h\cdot\nu=V_W\cdot\nu$ on $\partial\mathcal P_1$. For a bounded bilinear form $B$ on $H^{1/2}\left(\partial\Omega\right)\times H^{1/2}\left(\partial\Omega\right)$ we write $\left\lVert B\right\rVert_*$ for its operator norm. We have the following result:

    \begin{theorem}\label{lowerbound result}
        There exists a constant $m_{1}>0$ depending only on the prior information $PI$ such that, for every admissible base point $\left(\mathcal P,k\right)\in\mathcal A\times\mathcal K$, every vertex direction $W=\left(W_1,\dots,W_N\right)\in\left(\mathbb R^2\right)^N$ and every $\kappa\in\mathbb R$,
        \begin{equation}\label{lowerbound operator version}
            \left\lVert B_{\mathcal P,k}\left[W,\kappa\right]\right\rVert_*\ge m_{1}\left(\sum_{i=1}^{N}\left\lvert W_i\right\rvert+\left\lvert\kappa\right\rvert\right) .
        \end{equation}
        In particular, for $\mathcal P=\mathcal P_1$, $k=k_1$, $W_i=x_2^i-x_1^i$ and $\kappa=k_2-k_1$, there exist non-zero $\tilde{\phi},\tilde{\psi}\in H^{1/2}\left(\partial\Omega\right)$ such that
        \begin{equation}\label{lowerbound}
            \left| \frac{d}{dt}F\left(t,\tilde{\phi},\tilde{\psi}\right)_{|_{t=0}}\right|\geq \frac{m_{1}}{2}\left(\sum_{i=1}^{N}\left\lvert x_1^i-x_2^i\right\rvert+\left\lvert k_2-k_1\right\rvert\right)\lVert\tilde{\phi}\rVert_{H^{1/2}\left(\partial\Omega\right)}\lVert\tilde{\psi}\rVert_{H^{1/2}\left(\partial\Omega\right)} .
        \end{equation}
    \end{theorem}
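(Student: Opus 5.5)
Both sides of \eqref{lowerbound operator version} are positively homogeneous of degree one in $\left(W,\kappa\right)$, so it suffices to prove
\[
\inf\left\{\left\lVert B_{\mathcal P,k}\left[W,\kappa\right]\right\rVert_*:\ \left(\mathcal P,k\right)\in\mathcal A\times\mathcal K,\ \sum_i\left\lvert W_i\right\rvert+\left\lvert\kappa\right\rvert=1\right\}>0 .
\]
I will argue by contradiction and compactness, following the scheme of \cite{BerFra20}. Suppose there are $\left(\mathcal P_n,k_n,W^n,\kappa_n\right)$ with $\sum_i\left\lvert W_i^n\right\rvert+\left\lvert\kappa_n\right\rvert=1$ and $\left\lVert B_{\mathcal P_n,k_n}\left[W^n,\kappa_n\right]\right\rVert_*\to0$. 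The number of vertices takes only finitely many values ($\le N_0$), so after passing to a subsequence I may fix $N$. The vertices lie in $\overline\Omega$, and $k_n$, $W^n$, $\kappa_n$ are bounded, so a further subsequence converges: vertices to $x_i$, $k_n\to k$, $W^n\to W$, $\kappa_n\to\kappa$, with $\sum\left\lvert W_i\right\rvert+\left\lvert\kappa\right\rvert=1$. The conditions in Definitions \ref{def1} and \ref{def2} are closed (side lengths $\ge d_0$, angle bounds, distance to $\partial\Omega$, uniform Lipschitz charts, Lemma \ref{edge separation}, and the closed versions of the bounds on $k$). Hence the limit polygon $\mathcal P$ is a nondegenerate simple polygon with the same vertex structure, and $k$ satisfies $\left\lvert k-1\right\rvert\ge1/\lambda_1$, $k>0$.

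\textbf{Passing to the limit.} Next I show $B_{\mathcal P_n,k_n}\left[W^n,\kappa_n\right]\left(\phi,\psi\right)\to B_{\mathcal P,k}\left[W,\kappa\right]\left(\phi,\psi\right)$ for each fixed pair $\phi,\psi$. Rather than work with the boundary integral, whose exterior gradient traces are singular at vertices, I use the distributed form \eqref{eq:first variation distributed},
\[
\int_\Omega\gamma\nabla u^\phi\cdot\mathcal A_h\nabla u^\psi+\kappa\int_{\mathcal P}\nabla u^\phi\cdot\nabla u^\psi ,
\]
with $h_n$ constructed as in Lemma \ref{extension_of_affine_map} (the construction there is valid for an arbitrary vertex vector $W$). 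I will pull everything back to the limit polygon by the bi-Lipschitz maps that send $\mathcal P$ to $\mathcal P_n$ and converge to the identity in $W^{1,\infty}$. Then $\chi_{\mathcal P_n}\to\chi_{\mathcal P}$ in $L^1$, and Meyers-type higher integrability together with the $L^1$ convergence of the coefficients gives $\nabla u_n\to\nabla u$ in $L^2$. The transported fields $h_n$ converge to $h$ in $W^{1,\infty}$, so the limit form equals $B_{\mathcal P,k}\left[W,\kappa\right]$. Since the norms tend to zero, I conclude that $B_{\mathcal P,k}\left[W,\kappa\right]\left(\phi,\psi\right)=0$ for all $\phi,\psi$.

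\textbf{Injectivity at the limit.} It remains to show that $B_{\mathcal P,k}\left[W,\kappa\right]\equiv0$ forces $W=0$ and $\kappa=0$; this is the main obstacle. By Runge approximation, since $\mathcal P\Subset\Omega$, the solutions $u^\phi$ can be replaced by arbitrary solutions of the transmission problem on a neighbourhood of $\mathcal P$, and in particular by the singular solutions of Alessandrini (Green's functions with pole $y$ outside $\mathcal P$). This gives, for all $y,z\in\Omega\setminus\mathcal P$,
\[
\left(k-1\right)\int_{\partial\mathcal P}\left(V_W\cdot\nu\right)\nabla G\left(\cdot,y\right)\cdot M\nabla G\left(\cdot,z\right)\,\mathrm ds+\kappa\int_{\mathcal P}\nabla G\left(\cdot,y\right)\cdot\nabla G\left(\cdot,z\right)\,\mathrm dx=0 .
\]

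\textbf{Vertex argument.} Let $y=z$ approach a vertex $x_i$ along its bisector from outside. Near a corner of angle $\alpha_i\neq\pi$ the solution has the known singular expansion $r^{\beta-1}$. The boundary term then blows up at a rate at least that of $\left\lvert y-x_i\right\rvert^{-1}$ times the projection of $W_i$ onto the normals of the two adjacent sides. The volume term is integrable and grows at most logarithmically, so it is of strictly lower order. The two adjacent normals are independent because $\left\lvert\alpha_i-\pi\right\rvert\ge\alpha_0$, so the leading coefficient is a nondegenerate linear form in $W_i$, and its vanishing gives $W_i=0$. If delicate cancellations of sign make the corner asymptotics hard to control, I will first try poles approaching interior points of the sides; there the leading term is $\left(V_W\cdot\nu\right)$ times an explicit positive density. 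Since $V_W\cdot\nu$ is affine on each side, its vanishing on every side forces both vertex normal components to vanish, hence $W_i=0$ by the non-collinearity of the adjacent sides.

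Once $W=0$, the identity reduces to $\kappa\int_{\mathcal P}\left\lvert\nabla u^\phi\right\rvert^2=0$ for a nonconstant $\phi$, so $\kappa=0$. This contradicts $\sum\left\lvert W_i\right\rvert+\left\lvert\kappa\right\rvert=1$ and proves \eqref{lowerbound operator version}. Finally, \eqref{lowerbound} follows directly from the definition of the operator norm: there exist nonzero $\tilde\phi,\tilde\psi$ attaining at least half of $\left\lVert B\right\rVert_*$, and by Theorem \ref{mainderivativestheorem} the derivative $\frac{dF}{dt}\big|_{t=0}$ equals $B$.
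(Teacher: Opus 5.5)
Your architecture is the paper's: compactness over the closed parameter class, passage to the limit through the distributed form, and joint injectivity via singular solutions. However, the vertex argument you lead with fails as stated. Let the pole approach $x_i$ at distance $r$ along a fixed ray. Scaling the two-sector transmission problem gives, at leading order, $(k-1)\,r^{-1}\bigl(c_-\,W_i\cdot\nu_-+c_+\,W_i\cdot\nu_+\bigr)$, with $c_\pm>0$ depending on the ray. Along the bisector, reflection symmetry of the two-sector problem forces $c_-=c_+$. So you get one scalar equation, $W_i\cdot(\nu_-+\nu_+)=0$. A nonzero linear form on $\mathbb R^2$ has a one-dimensional kernel, so independence of $\nu_\pm$ does not give $W_i=0$. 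Two rays with different ratios $c_-/c_+$ would suffice, but that would need proof. Your fallback, with poles $y+r\nu$ at side-interior points, is exactly Step 1 of the paper and is the argument to use. There you must also bound the boundary contribution away from $y$ uniformly in $r$. Near the vertices this works only because $\lvert\nabla G^e\rvert\lesssim\varrho^{\mu_i-1}$ with $\mu_i>\frac12$, which the paper extracts from the corner characteristic equation.

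Elsewhere your departures from the paper are legitimate and partly lighter.

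\emph{Pointwise convergence.} In a contradiction argument, convergence of $B_{\mathcal P_n,k_n}[W^n,\kappa_n](\phi,\psi)$ for each fixed pair suffices. It already gives $\lVert B_{\mathcal P,k}[W,\kappa]\rVert_*\le\liminf_n\lVert B_{\mathcal P_n,k_n}[W^n,\kappa_n]\rVert_*$. The paper's Step 2 proves full operator-norm continuity, which is more than needed.

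\emph{Extensions.} For the same reason you do not need $W^{1,\infty}$ convergence of the transported extensions. That convergence is not automatic for McShane extensions built separately on each $\mathcal P_n$, which is why the paper transports one reference extension (Lemma \ref{continuous extension operator}). Weak-$*$ convergence of $D(h_n\circ\Psi_n)$, combined with strong $L^1$ convergence of $\nabla U_n^\phi\otimes\nabla U_n^\psi$, is enough.

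\emph{Runge approximation.} Using Runge approximation instead of the paper's exterior poles in $U$ plus unique continuation in the pole variable is also fine. However, you should pass the vanishing to the limit of the approximants through the distributed form \eqref{distributed B full}, with an extension $h$ supported away from the poles. The reason is that $H^1$ convergence near $\mathcal P$ does not directly control the gradient traces in the boundary form.

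\emph{The step $\kappa=0$.} Deducing $\kappa=0$ from a single nonconstant $\phi$ is simpler than the paper's Green-function version. You should add one line: if $\nabla u^\phi\equiv0$ on $\mathcal P$, then $u^\phi-c$ has zero Cauchy data on $\partial\mathcal P$. Hence $u^\phi\equiv c$ in the connected exterior, and $\phi$ is constant.
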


    \begin{proof}
        The proof proceeds in three steps: joint injectivity, continuity on the tangent sphere, and compactness.
    
    Fix a point $x_c\in\Omega$ and let $\Omega_{0}:=B_{2L}\left(x_c\right)$ be the open ball of radius $2L$ centered at $x_c$; since $\mathrm{diam}\left(\Omega\right)\le L$, one has $\overline\Omega\subset B_{L}\left(x_c\right)$ and $\mathrm{dist}\left(\overline\Omega,\partial\Omega_{0}\right)\ge L$. Choose a non-empty open ball $U$ with $\overline U\subset\Omega_{0}\setminus\overline\Omega$. We extend the conductivity $\gamma^{k_{1}}_{\mathcal{P}_{1}}$ by setting it equal to $1$ on $\Omega_{0}\setminus\Omega$; to simplify the notation, we keep writing $\gamma$ for the extended coefficient. We construct the Green function on the fixed extension domain $\Omega_0$.
    
    Let $G\left(\cdot,y\right)$ be the Green function in $\Omega_{0}$ associated with the conductivity $\gamma$, in other words, $G$ satisfies the following equation:

    \[
        \begin{cases}
            -\text{div}\left(\gamma\nabla G\left(\cdot,y\right)\right)=\delta_{y} & \text{in }\Omega_{0}\\
            G\left(\cdot,y\right)=0 & \text{on }\partial\Omega_{0}.
        \end{cases}
    \]

    We have some quantitative results for such Green function, see Theorem 4.2 in \cite{alessandrini2005lipschitz} and Proposition 3.4 in \cite{BerFra11}:
    \begin{lemma}\label{global behavior lemma}
        Let $G\left(\cdot,y\right)$ be the Green function of $\mathrm{div}\left(\gamma\nabla\cdot\right)$ in $\Omega_{0}$ with homogeneous Dirichlet condition, where $\gamma$ satisfies $\frac{1}{\lambda_{0}}\leq\gamma\leq\lambda_{0}$. There exists a constant $C>0$, depending only on the prior information $PI$, such that for every $x,y\in\Omega_{0}$, $x\neq y$,
        \begin{equation}\label{global behavior}
            \lvert G\left(x,y\right)\rvert\leq C\left(1+\lvert\log\lvert x-y\rvert\rvert\right),
        \end{equation}
        and, for every $0<r<\frac{1}{2}\mathrm{dist}\left(y,\partial\Omega_{0}\right)$,
        \begin{equation}\label{global behavior energy}
            \lVert G\left(\cdot,y\right)\rVert_{H^{1}\left(\Omega_{0}\setminus B_{r}\left(y\right)\right)}\leq C\left(1+\lvert\log r\rvert\right)^{1/2}.
        \end{equation}
    \end{lemma}

    Near the polygonal vertices, the Green-function gradient may exhibit singularities of order $\varrho^{\mu-1}$, where $\varrho$ is the distance to the vertex and $1/2<\mu<1$. These singularities are square-integrable along the sides and are treated in Step 1 below. Away from the vertices, Lemma \ref{local behavior lemma} gives the local transmission asymptotics.
    \begin{lemma}\label{local behavior lemma}
        Let $G^e:=G|_{\overline{\Omega_0\setminus\mathcal P_1}}$ and $G^i:=G|_{\overline{\mathcal P_1}}$ denote the exterior and interior traces of the Green function. Set
        \[
            \rho_0:=\frac18\min\left\{d_0,d_*,d_0\sin\alpha_0\right\},
            \qquad
            r_{1}:=\frac1{16}\min\left\{\rho_0\sin\alpha_0,d_*,d_0\right\},
        \]
        which depend only on $PI$, and note that $r_1<\rho_0/4\le d_0/32$ and $r_1\le d_0/16$. There exists a constant $C>0$, depending only on $PI$, such that, for every point $y$ in the interior of a side of $\mathcal P_1$ with $\mathrm{dist}\left(y,\text{vertices}\right)\ge\rho_0$ (such points exist: each side has length at least $d_0>2\rho_0$, and after removing the two caps of length $d_0/4$ at the endpoints every remaining point stays at distance at least $\rho_0$ from all vertices — the endpoints at distance at least $d_0/4\ge\rho_0$, every vertex of a non-adjacent side at distance at least $d_*$ by Lemma \ref{edge separation}, and for the triangle $N=3$ the third vertex at distance at least $d_0\sin\alpha_0$ from the line through the opposite side, by the side-length and angle bounds; in addition $B_{r_1}\left(y\right)\subset\Omega$ since $\mathrm{dist}\left(y,\partial\Omega\right)\ge d_0$), and for every $0<r<r_1$, the point $y_r=y+\nu r$ (on the exterior side, $\nu$ being the outer normal of the side) satisfies the following one-sided asymptotics: for every $x$ in the inclusion-side neighborhood $B_{r_1}\left(y\right)\cap\overline{\mathcal P_1}$,
        \begin{equation}\label{local behavior function}
            \left\lvert G^i\left(x,y_r\right)-\frac{2}{k_{1}+1}\Gamma\left(x,y_r\right)\right\rvert\leq C ,
        \end{equation}
        and
        \begin{equation}\label{local behavior gradient}
            \left\lvert \nabla G^i\left(x,y_r\right)-\frac{2}{k_{1}+1}\nabla \Gamma\left(x,y_r\right)\right\rvert\leq C .
        \end{equation}
        Moreover, on the interface segment $B_{r_1}\left(y\right)\cap\partial\mathcal P_1$ the two traces satisfy the transmission conditions
        \begin{equation}\label{transmission relation}
            G^e=G^i,\qquad \partial_{\tau}G^e=\partial_{\tau}G^i,\qquad \partial_{\nu}G^e=k_1\,\partial_{\nu}G^i,\qquad\text{hence}\qquad M\nabla G^e=\nabla G^i ,
        \end{equation}
        where $\tau,\nu$ are the tangential and outer normal directions of the side and $M$ is the matrix of Theorem \ref{mainderivativestheorem}. Here $\Gamma(x,y)$ represents the fundamental solution of the Laplace equation. The pole $y_r$ lies in the exterior phase of conductivity $1$. The corresponding transmitted singularity in the inclusion has coefficient $\frac{2}{1+k_1}$.
    \end{lemma}
    \begin{proof}
        In local coordinates flatten the side to $\left\{x_2=0\right\}$, with the exterior half-plane $\left\{x_2>0\right\}$ carrying conductivity $1$, the interior half-plane $\left\{x_2<0\right\}$ carrying conductivity $k_1$, and the pole at $y_r=\left(0,r\right)$. The method of images produces the exact transmission solution of this half-plane model,
        \[
            G^e_{\mathrm{flat}}\left(x\right)=\Gamma\left(x-y_r\right)+\frac{1-k_1}{1+k_1}\,\Gamma\left(x-y_r^{*}\right)\quad\left(x_2>0\right),\qquad G^i_{\mathrm{flat}}\left(x\right)=\frac{2}{1+k_1}\,\Gamma\left(x-y_r\right)\quad\left(x_2<0\right),
        \]
        where $y_r^{*}=\left(0,-r\right)$ is the reflected pole. Indeed $-\Delta G^e_{\mathrm{flat}}=\delta_{y_r}$ on $\left\{x_2>0\right\}$ and $-k_1\Delta G^i_{\mathrm{flat}}=0$ on $\left\{x_2<0\right\}$; on $\left\{x_2=0\right\}$ the symmetry $\Gamma\left(\cdot,y_r\right)=\Gamma\left(\cdot,y_r^{*}\right)$ gives $G^e_{\mathrm{flat}}=G^i_{\mathrm{flat}}=\frac{2}{1+k_1}\Gamma\left(\cdot,y_r\right)$, and a direct computation gives $\partial_{\nu}G^e_{\mathrm{flat}}=k_1\,\partial_{\nu}G^i_{\mathrm{flat}}$ with $\nu=e_{x_2}$. The transmission conditions \eqref{transmission relation} are the standard interface conditions of the conductivity problem, and $M\nabla G^e=\nabla G^i$ follows from them together with $M\tau=\tau$ and $M\nu=\frac1{k_1}\nu$.

        It remains to control the difference
        \[
            R_r:=G-G_{\mathrm{flat}}
        \]
        on the inclusion side. Choose $R:=r_1=\frac1{16}\min\left\{\rho_0\sin\alpha_0,d_*,d_0\right\}$; then $R<\rho_0/4$, and $B_{2R}\left(y\right)$ is free of vertices and meets $\partial\mathcal P_1$ only in the side containing $y$: non-adjacent sides stay at distance at least $d_*\ge2R$ (Lemma \ref{edge separation}), the adjacent sides meet the side of $y$ only at its endpoints, which are at distance at least $\rho_0>2R$ from $y$, and $y$ stays at distance at least $\rho_0\sin\alpha_0>2R$ from the adjacent sides; in addition $B_{2R}\left(y\right)\subset\Omega$ because $\mathrm{dist}\left(y,\partial\Omega\right)\ge d_0>2R$. Take $0<r<R$. Since $G$ and $G_{\mathrm{flat}}$ have the same pole $y_r$, the same exterior coefficient $1$ and the same transmission conditions, the singularity cancels completely:
        \begin{equation}\label{eq:Rr homogeneous}
            -\operatorname{div}\left(\gamma\nabla R_r\right)=0\qquad\text{in }B_{2R}\left(y\right) .
        \end{equation}
        On $\partial B_{2R}\left(y\right)$ the pole $y_r$ and its reflection $y_r^{*}$ stay at distance at least $R$ from the circle, so \eqref{global behavior} and the explicit form of $G_{\mathrm{flat}}$ give $\left\lVert R_r\right\rVert_{L^\infty\left(\partial B_{2R}\left(y\right)\right)}\le C$ uniformly in $r\in\left(0,R\right)$. By the maximum principle for the transmission equation,
        \[
            \left\lVert R_r\right\rVert_{L^\infty\left(B_{2R}\left(y\right)\right)}\le C .
        \]
        To obtain a uniform gradient bound up to the interface, we use reflection across the flat interface to reduce the transmission problem to harmonic equations. Write $R_r^+$ and $R_r^-$ for the restrictions of $R_r$ to $B_{2R}\left(y\right)\cap\left\{x_2>0\right\}$ and $B_{2R}\left(y\right)\cap\left\{x_2<0\right\}$, respectively. On each open half-ball the coefficient $\gamma$ is constant, so $R_r^+$ and $R_r^-$ are harmonic there. Since $R_r$ is a weak solution of the transmission problem \eqref{eq:Rr homogeneous} in $B_{2R}\left(y\right)$, it satisfies on the interface segment $B_{2R}\left(y\right)\cap\left\{x_2=0\right\}$ the transmission conditions
        \[
            R_r^+=R_r^-,\qquad \partial_2R_r^+=k_1\,\partial_2R_r^- .
        \]
        Define $\widetilde R_r^-\left(x_1,x_2\right):=R_r^-\left(x_1,-x_2\right)$ on $\left\{x_2>0\right\}$ and set
        \[
            S:=R_r^++k_1\,\widetilde R_r^-,\qquad D:=R_r^+-\widetilde R_r^- .
        \]
        Then $S$ and $D$ are harmonic in $B_{2R}\left(y\right)\cap\left\{x_2>0\right\}$, and on $\left\{x_2=0\right\}$ one has $\partial_2S=\partial_2R_r^+-k_1\partial_2R_r^-=0$ and $D=0$. By the even, respectively odd, reflection principle across $\left\{x_2=0\right\}$, the even reflection of $S$ and the odd reflection of $D$ are weak solutions of $-\Delta u=0$ in the whole ball $B_{2R}\left(y\right)$, hence harmonic there by Weyl's lemma. Since $\left\lVert R_r\right\rVert_{L^\infty\left(B_{2R}\left(y\right)\right)}\le C$ and $k_1\le\lambda_0$, these reflected functions are bounded by $C$ on $B_{2R}\left(y\right)$, uniformly in $r$. The interior gradient estimate for harmonic functions therefore gives
        \[
            \left\lVert\nabla S\right\rVert_{L^\infty\left(B_{3R/2}\left(y\right)\cap\left\{x_2>0\right\}\right)}+\left\lVert\nabla D\right\rVert_{L^\infty\left(B_{3R/2}\left(y\right)\cap\left\{x_2>0\right\}\right)}\le\frac{C}{R}\le C ,
        \]
        uniformly in $r$, where the last inequality uses that $R$ is bounded below by a constant depending only on $PI$. From $R_r^+=\frac{S+k_1D}{1+k_1}$, $\widetilde R_r^-=\frac{S-D}{1+k_1}$ and $R_r^-\left(x_1,x_2\right)=\widetilde R_r^-\left(x_1,-x_2\right)$ for $x_2<0$ we obtain
        \[
            \left\lVert\nabla R_r\right\rVert_{L^\infty\left(B_R\left(y\right)\cap\overline{\mathcal P_1}\right)}+\left\lVert\nabla R_r\right\rVert_{L^\infty\left(B_R\left(y\right)\setminus\overline{\mathcal P_1}\right)}\le C ,
        \]
        uniformly in $r\in\left(0,R\right)$. Restricting to the inclusion side and using that $\nabla G^i_{\mathrm{flat}}=\frac{2}{1+k_1}\nabla\Gamma\left(\cdot,y_r\right)$ there, we obtain \eqref{local behavior function} and \eqref{local behavior gradient}.
    \end{proof}
    
    Let
    \begin{align}
        S\left( y,z \right)&=\left(k_{1}-1\right)\int_{\partial\mathcal{P}_{1}}\left(V_W\cdot\nu\right)\nabla G^e\left(\cdot,y\right)\cdot\left(M\nabla G^e\left(\cdot,z\right)\right)\mathrm ds 
        \\
        &+\kappa\int_{\mathcal{P}_{1}}\nabla G^i\left(x,y\right)\cdot\nabla G^i\left(x,z\right)\mathrm dx ,
    \end{align}
    so that $S\left(y,z\right)=B_{\mathcal P_1,k_1}\left[W,\kappa\right]\left(G\left(\cdot,y\right),G\left(\cdot,z\right)\right)$ whenever $y,z\in\Omega_0\setminus\overline\Omega$: then $G\left(\cdot,y\right)|_\Omega$ and $G\left(\cdot,z\right)|_\Omega$ are pole-free solutions whose traces lie in $H^{1/2}\left(\partial\Omega\right)$. For poles inside $\Omega$ (used in the unique continuation step below), $S\left(y,z\right)$ always refers to the integral formula above. We use throughout the symmetry $G\left(x,y\right)=G\left(y,x\right)$ of the Green function and the fact that, for fixed $x$, the map $y\mapsto G\left(x,y\right)$ solves the homogeneous equation away from $x$, so that each integrand above is a harmonic function of each pole variable away from the poles.

    \emph{Step 1 (joint injectivity).} We prove that
    \[
        B_{\mathcal P,k}\left[W,\kappa\right]\equiv0
        \quad\Longrightarrow\quad
        W=0,\qquad\kappa=0,
    \]
    for every $\left(\mathcal P,k\right)\in\overline{\mathcal A}_N\times\mathcal K_{\mathrm{adm}}$, with the parameter classes defined in Lemma \ref{polygon compactness} and Step 3 below.

    By Lemma \ref{polygon compactness}, the closure configurations retain the prescribed lower bounds on side lengths, angles, boundary distance, and separation of non-adjacent sides, together with uniform Lipschitz charts of window size $r_0/2$. The proof of the boundary identity \eqref{eq:relationshipbetweenAhandmatrix} therefore applies to these configurations for every $k\in\mathcal K_{\mathrm{adm}}$. At fixed conductivity, this identity is obtained for a suitably scaled extension of the vertex field and then extended to arbitrary vertex directions by linearity. Together with the volume term in \eqref{eq:Bdef}, it gives the bilinear form $B_{\mathcal P,k}\left[W,\kappa\right]$ for every $\kappa\in\mathbb R$.

    Fix $\left(\mathcal P_1,k_1\right)\in\overline{\mathcal A}_N\times\mathcal K_{\mathrm{adm}}$ and suppose that $B_{\mathcal P_1,k_1}\left[W,\kappa\right]\equiv0$. Let $G$ and $S$ be defined as above for this configuration. For $y,z\in U$, the restrictions $G\left(\cdot,y\right)|_\Omega$ and $G\left(\cdot,z\right)|_\Omega$ solve the homogeneous conductivity equation in $\Omega$, and their boundary traces belong to $H^{1/2}\left(\partial\Omega\right)$. Applying the assumed identity to these traces yields
    \begin{equation}\label{eq:S zero on U}
        S\left(y,z\right)=B_{\mathcal P_1,k_1}\left[W,\kappa\right]\left(G\left(\cdot,y\right)|_{\partial\Omega},G\left(\cdot,z\right)|_{\partial\Omega}\right)=0,\qquad y,z\in U .
    \end{equation}
    For fixed $z\in U$, we prove that $y\mapsto S\left(y,z\right)$ is harmonic on $E:=\Omega_0\setminus\overline{\mathcal P_1}$ in the distribution sense. Write $P=\mathcal P_1$, $k=k_1$ for brevity, and let $\eta\in C_c^{\infty}\left(E\right)$. Define
    \[
        H_\eta\left(x\right):=\int_E G\left(x,y\right)\Delta_y\eta\left(y\right)\mathrm dy .
    \]
    Since $\operatorname{supp}\eta$ stays at positive distance from $\overline P$ and the conductivity equals $1$ on $E$, the identity $\int_{\Omega_0}G\left(x,y\right)\operatorname{div}\left(\gamma\nabla\eta\right)\left(y\right)\mathrm dy=-\eta\left(x\right)$ (the Green function of $-\operatorname{div}\left(\gamma\nabla\cdot\right)$ on $\Omega_0$ with zero Dirichlet data) gives $H_\eta\left(x\right)=-\eta\left(x\right)=0$ for every $x$ in a neighborhood of $\overline P$, together with $\nabla_xH_\eta=0$ there. On every compact set $K\Subset E$ the pole stays at positive distance from $\partial P$, and there the side-reflection estimate of Lemma \ref{local behavior lemma} (applied at fixed scale away from the vertices) together with the corner bound $\left\lvert\nabla_xG\left(x,y\right)\right\rvert\le C\left(K\right)\left(\varrho^{\mu_i-1}+1\right)$ near each vertex, with coefficients continuous in the pole variable, provides a dominating function on $\partial P$ and on $P$ that is integrable in $x$ locally uniformly in $y\in K$; Fubini's theorem is therefore applicable in both terms of the integral defining $S$, and
    \[
        \begin{aligned}
            \int_E S\left(y,z\right)\Delta_y\eta\left(y\right)\mathrm dy
            &=\left(k-1\right)\int_{\partial P}\left(V_W\cdot\nu\right)\nabla_xH_\eta^{e}\cdot\left(M\nabla_xG^e\left(\cdot,z\right)\right)\mathrm ds\\
            &\quad+\kappa\int_{P}\nabla_xH_\eta^{i}\cdot\nabla_xG^i\left(\cdot,z\right)\mathrm dx
            =0,
        \end{aligned}
    \]
    because $\nabla_xH_\eta$ vanishes on $\partial P$ and on $P$. Hence $\Delta_yS\left(\cdot,z\right)=0$ in $\mathcal D'\left(E\right)$, and by Weyl's lemma $S\left(\cdot,z\right)$ has a harmonic representative on $E$. For fixed $z\in E$, the integral defining $S\left(y,z\right)$ is continuous in $y\in E$: away from the vertices the Green-function gradient traces depend continuously on the pole, near each vertex the locally uniform corner bounds provide an integrable majorant for their products ($\mu_i>1/2$), and the volume term behaves the same way; dominated convergence gives the continuity. Consequently $S\left(\cdot,z\right)$ coincides everywhere with its harmonic representative. The domain $E=\Omega_0\setminus\overline{\mathcal P_1}$ is connected, because $\mathcal P_1$ is a simple polygon compactly contained in the ball $\Omega_0$. Since $S\left(\cdot,z\right)$ vanishes on the non-empty open set $U$, unique continuation gives $S\left(\cdot,z\right)\equiv0$ on all of $E=\Omega_0\setminus\overline{\mathcal P_1}$. Fixing now $y\in\Omega_0\setminus\overline{\mathcal P_1}$ and repeating the argument in the $z$ variable, we obtain
    \begin{equation}\label{eq:S zero everywhere}
        S\left(y,z\right)=0\qquad\forall y,z\in\Omega_0\setminus\overline{\mathcal P_1}.
    \end{equation}
    We may therefore evaluate \eqref{eq:S zero everywhere} at $y=z=y_r$. Fix a side of $\mathcal P_1$ and a point $y$ on it at distance at least $\rho_0$ from the vertices, and set $y_r=y+\nu r$ as in Lemma \ref{local behavior lemma}. For $0<r<r_1$,
    \[
        0=S\left(y_r,y_r\right)=\left(k_1-1\right)\int_{\partial\mathcal P_1}\left(V_W\cdot\nu\right)\nabla G^e\left(\cdot,y_r\right)\cdot\left(M\nabla G^e\left(\cdot,y_r\right)\right)\mathrm ds+\kappa\int_{\mathcal P_1}\left\lvert\nabla G^i\left(\cdot,y_r\right)\right\rvert^2\mathrm dx .
    \]
    By \eqref{transmission relation}, on the interface $\partial\mathcal P_1$ we have $M\nabla G^e=\nabla G^i$ and $\nabla G^e=M^{-1}\nabla G^i$, hence
    \[
        \nabla G^e\cdot\left(M\nabla G^e\right)=\nabla G^i\cdot\left(M^{-1}\nabla G^i\right)\qquad\text{on }\partial\mathcal P_1 .
    \]
    Fix $\ell\in\left(0,\frac18\min\left\{\rho_0,r_1,d_0\right\}\right)$, where $\rho_0$ and $r_1$ are the constants of Lemma \ref{local behavior lemma}. We first control the contributions to \eqref{eq:S zero everywhere} that stay away from $B_\ell\left(y\right)$. The $H^1$ estimate \eqref{global behavior energy} gives, for $r\le\ell/2$,
    \[
        \int_{\mathcal P_1\setminus B_\ell\left(y\right)}\left\lvert\nabla G^i\left(x,y_r\right)\right\rvert^{2}\mathrm dx\le\left\lVert G\left(\cdot,y_r\right)\right\rVert_{H^1\left(\Omega_0\setminus B_{\ell/2}\left(y_r\right)\right)}^{2}\le C_\ell ,
    \]
    since $\mathcal P_1\setminus B_\ell\left(y\right)\subset\Omega_0\setminus B_{\ell/2}\left(y_r\right)$ and $\ell/2<\frac12\mathrm{dist}\left(y_r,\partial\Omega_0\right)$. We claim that the same holds on the boundary:
    \begin{equation}\label{eq:away boundary estimate}
        \int_{\partial\mathcal P_1\setminus B_\ell\left(y\right)}\left\lvert\nabla G^e\left(x,y_r\right)\right\rvert^{2}\mathrm ds\le C_\ell ,
    \end{equation}
    uniformly in $r\le\ell/4$. To prove \eqref{eq:away boundary estimate}, split $\partial\mathcal P_1\setminus B_\ell\left(y\right)$ into side-interior pieces and vertex pieces. On a closed subsegment of a side at positive distance from the vertices, from $B_\ell\left(y\right)$ and from the pole $y_r$, a small ball around any point of the subsegment crosses only the straight interface; the two one-sided restrictions of $G\left(\cdot,y_r\right)$ admit harmonic extensions across the interface by even and odd reflection (Lemma \ref{local behavior lemma}, applied at fixed scale), so both gradient traces are bounded there and $\left\lvert\nabla G^e\right\rvert\le C_\ell$ on such subsegments. Around each vertex $x_i$, fix a small disk $B_{\varepsilon_0}\left(x_i\right)$ with $\varepsilon_0<\frac14\min\left\{\rho_0,\ell,d_0,d_*,d_0\sin\alpha_0\right\}$: it is disjoint from $B_\ell\left(y\right)$ because $\mathrm{dist}\left(y,x_i\right)\ge\rho_0$, it contains no other vertex, no non-adjacent side (Lemma \ref{edge separation}), and, in the case $N=3$, none of the side opposite to $x_i$ (whose distance from $x_i$ is at least $d_0\sin\alpha_0$ by the side-length and angle bounds), so inside it $\partial\mathcal P_1$ consists exactly of the two sides adjacent to $x_i$. The corner expansion for the transmission problem in this two-sector domain (see \cite{hanke2024shape}, Section 3, equations (11), (13)--(14) and (19); the expansion originates in \cite{bellout1992stability}) gives, for $x$ on the two adjacent sides,
    \[
        \left\lvert\nabla G\left(x,y_r\right)\right\rvert\le C\left(\varrho^{\mu_i-1}+1\right),\qquad \varrho=\left\lvert x-x_i\right\rvert,
    \]
    where $\mu_i\in\left(\frac12,1\right)$ is the leading singular exponent at $x_i$. The gap $\mu_i>\frac12$ follows from the characteristic equation of \cite{hanke2024shape}, equations (13)--(14): with $\beta_i:=\left\lvert\alpha_i-\pi\right\rvert\ge\alpha_0$ and $a:=\frac{\left\lvert k_1-1\right\rvert}{k_1+1}\in\left(0,1\right)$, the exponent solves $\sin\left(\pi\mu\right)=a\sin\left(\beta_i\mu\right)$; for every $0<s\le\frac{\pi}{\pi+\beta_i}$ one has
    \[
        \sin\left(\pi s\right)-\sin\left(\beta_i s\right)=2\cos\frac{\left(\pi+\beta_i\right)s}{2}\sin\frac{\left(\pi-\beta_i\right)s}{2}\ge0>\left(a-1\right)\sin\left(\beta_i s\right),
    \]
    so no solution of the characteristic equation lies in that range, and $\mu_i>\frac{\pi}{\pi+\beta_i}\ge\frac{\pi}{2\pi-\alpha_0}>\frac12$. The constant $C$ is uniform in $r$: the pole $y_r$ stays at distance at least $\rho_0/2$ from $x_i$, so the data controlling the expansion, the $L^\infty$ and $H^1$ norms of $G\left(\cdot,y_r\right)$ on the ring $B_{\varepsilon_0}\left(x_i\right)\setminus B_{\varepsilon_0/2}\left(x_i\right)$, are uniformly bounded through \eqref{global behavior} and \eqref{global behavior energy}. Throughout this argument, $\left(\mathcal P_1,k_1\right)$, $y$, and $\ell$ are fixed, and the estimates are uniform in $r$. Step 3 provides the uniform lower bound over the admissible class. Since $\mu_i>\frac12$,
    \[
        \int_{\partial\mathcal P_1\cap B_{\varepsilon_0}\left(x_i\right)}\left\lvert\nabla G^e\right\rvert^{2}\mathrm ds\le C\int_0^{\varepsilon_0}\varrho^{2\mu_i-2}\mathrm d\varrho+C\le C .
    \]
    Summing the bounded contribution of the finitely many side-interior pieces and vertex pieces gives \eqref{eq:away boundary estimate}.

    Splitting the boundary integral in \eqref{eq:S zero everywhere} over $\partial\mathcal P_1\cap B_\ell\left(y\right)$ and its complement, adding and subtracting $\frac{2}{k_1+1}\Gamma\left(\cdot,y_r\right)$ and $V_W\left(y\right)$, and using the local gradient estimate \eqref{local behavior gradient} on $B_{r_1}\left(y\right)\cap\overline{\mathcal P_1}$ (which contains the side portion $B_\ell\left(y\right)\cap\partial\mathcal P_1$), we get
    \[
        0=\left(k_1-1\right)\left(V_W\left(y\right)\cdot\nu\right)b^{2}\int_{\partial\mathcal P_1\cap B_\ell\left(y\right)}\nabla\Gamma\left(\cdot,y_r\right)\cdot\left(M^{-1}\nabla\Gamma\left(\cdot,y_r\right)\right)\mathrm ds+O\left(\left\lvert\log r\right\rvert\right),\qquad b:=\frac{2}{k_1+1} ,
    \]
    where the $O\left(\left\lvert\log r\right\rvert\right)$ term collects the following contributions, all uniform in $r<\ell/4$. On $\partial\mathcal P_1\cap B_\ell\left(y\right)$ the identity \eqref{transmission relation} gives $\nabla G^e\cdot\left(M\nabla G^e\right)=\nabla G^i\cdot\left(M^{-1}\nabla G^i\right)$, and by \eqref{local behavior gradient} one has $\nabla G^i=b\nabla\Gamma+\nabla R_r$ with $\left\lvert\nabla R_r\right\rvert\le C$; the cross terms in $\nabla\Gamma\cdot\left(M^{-1}\nabla R_r\right)$ contribute $O\left(\left\lvert\log r\right\rvert\right)$, since $\int_0^\ell\left(s^{2}+r^{2}\right)^{-1/2}\mathrm ds\le C\left\lvert\log r\right\rvert$, and the affine correction $\left\lvert V_W\left(x\right)-V_W\left(y\right)\right\rvert\le C\left\lvert x-y\right\rvert$ contributes $O\left(\left\lvert\log r\right\rvert\right)$, since $\int_0^\ell s\left(s^{2}+r^{2}\right)^{-1}\mathrm ds\le C\left\lvert\log r\right\rvert$. The local part of the conductivity term satisfies $\kappa\int_{\mathcal P_1\cap B_\ell\left(y\right)}\left\lvert\nabla G^i\right\rvert^{2}\mathrm dx=O\left(\left\lvert\log r\right\rvert\right)$, since on $B_\ell\left(y\right)\cap\overline{\mathcal P_1}\subset B_{r_1}\left(y\right)\cap\overline{\mathcal P_1}$ the bound \eqref{local behavior gradient} gives $\left\lvert\nabla G^i\left(x,y_r\right)\right\rvert\le C\left\lvert x-y_r\right\rvert^{-1}+C$ and $\mathrm{dist}\left(\mathcal P_1,y_r\right)=r$. The complement of $B_\ell\left(y\right)$ contributes $O\left(1\right)$: the boundary part is bounded, in absolute value, by $\left\lvert k_1-1\right\rvert\left\lVert V_W\right\rVert_{L^\infty\left(\partial\mathcal P_1\right)}\left\lVert M\right\rVert_{L^\infty\left(\partial\mathcal P_1\right)}\int_{\partial\mathcal P_1\setminus B_\ell\left(y\right)}\left\lvert\nabla G^e\right\rvert^{2}\mathrm ds\le C_\ell$ by \eqref{eq:away boundary estimate}, and the volume part by the $H^1$ bound above. Since $M^{-1}$ is symmetric positive definite with eigenvalues $1$ and $k_1$, writing $x=y+s\tau$ on the side so that $\left\lvert\nabla\Gamma\left(x,y_r\right)\right\rvert^{2}=\left(4\pi^{2}\left(s^{2}+r^{2}\right)\right)^{-1}$ and $\nabla\Gamma\cdot\left(M^{-1}\nabla\Gamma\right)=\left(s^{2}+k_1r^{2}\right)/\left(4\pi^{2}\left(s^{2}+r^{2}\right)^{2}\right)$, the substitution $s=rt$ together with $\int_{\mathbb R}\frac{t^{2}}{\left(1+t^{2}\right)^{2}}\mathrm dt=\int_{\mathbb R}\frac{\mathrm dt}{\left(1+t^{2}\right)^{2}}=\frac{\pi}{2}$ gives
    \[
        \int_{\partial\mathcal P_1\cap B_\ell\left(y\right)}\nabla\Gamma\left(\cdot,y_r\right)\cdot\left(M^{-1}\nabla\Gamma\left(\cdot,y_r\right)\right)\mathrm ds=\frac{1+k_1}{8\pi r}+O\left(1\right)\qquad\left(r\to0\right).
    \]
    Inserting this into the decomposition of $S\left(y_r,y_r\right)$ above and multiplying by $r$, we obtain
    \[
        r\,S\left(y_r,y_r\right)=\frac{k_1-1}{2\pi\left(k_1+1\right)}\,V_W\left(y\right)\cdot\nu+O\left(r\left(1+\left\lvert\log r\right\rvert\right)\right),
    \]
    and letting $r\to0$ forces $\left(k_1-1\right)\left(V_W\left(y\right)\cdot\nu\right)=0$, hence $V_W\left(y\right)\cdot\nu=0$ because $k_1\neq1$. The point $y$ was arbitrary among the points of the sides at distance at least $\rho_0$ from the vertices, and Lemma \ref{local behavior lemma} gives $\rho_0\le d_0/4<d_0/2$, so this set contains a non-empty open interval on every side. On the side containing $y$, the function $s\mapsto V_W\left(y\left(s\right)\right)\cdot\nu$ is affine in the arc-length parameter (the field $V_W$ is affine on the side and the normal $\nu$ is constant there), and it vanishes on that open interval; hence it vanishes identically on the whole side, and in particular $W_i\cdot\nu_i=W_{i+1}\cdot\nu_i=0$ on side $i$. Applying this to the two sides adjacent to each vertex $x_i$ and using that the two outward normals are not parallel (by the angle condition $\left\lvert\alpha_i-\pi\right\rvert\ge\alpha_0$ of Definition \ref{def1}), we obtain $W_i=0$ for every $i$, that is $W=0$ (and the shape term in \eqref{eq:Bdef} vanishes). Inserting $W=0$ into \eqref{eq:S zero on U} and taking $y=z\in U$, we get
    \[
        0=S\left(y,y\right)=\kappa\int_{\mathcal P_1}\left\lvert\nabla G^i\left(\cdot,y\right)\right\rvert^2\mathrm dx .
    \]
    We show that this integral is strictly positive. If it vanished, then $\nabla G^i\left(\cdot,y\right)\equiv0$ on $\mathcal P_1$, hence $G^i\equiv C$ on $\mathcal P_1$ for some constant $C$. By \eqref{transmission relation}, on the interior segment of any side of $\mathcal P_1$ the exterior trace satisfies $G^e=C$ and $\partial_{\nu}G^e=k_1\,\partial_{\nu}G^i=0$; in other words, the harmonic function $G^e-C$ has zero Cauchy data on a non-empty relatively open segment of a straight part of its boundary. By local uniqueness for the Cauchy problem (reflection across the flat segment), $G^e\equiv C$ in an open exterior neighborhood of that segment. By unique continuation applied on the punctured exterior domain $\left(\Omega_0\setminus\overline{\mathcal P_1}\right)\setminus\left\{y\right\}$ (which is connected), we obtain $G^e\equiv C$ on $\left(\Omega_0\setminus\overline{\mathcal P_1}\right)\setminus\left\{y\right\}$. This is impossible: near the pole, $G^e\left(x,y\right)=-\frac1{2\pi}\log\left\lvert x-y\right\rvert+O\left(1\right)$ as $x\to y$ (the exterior conductivity equals $1$ near $y\in U$). Hence the integral is strictly positive, and $\kappa=0$. This proves the joint injectivity.

    \begin{lemma}\label{continuous extension operator}
        For each fixed $N$, the following holds with a constant $C=C\left(PI\right)$. For $\mathcal P\in\overline{\mathcal A}_N$ (see Step 3 below) and $W=\left(W_1,\dots,W_N\right)\in\left(\mathbb R^2\right)^N$ let $V_W$ be the continuous piecewise-affine vector field on $\partial\mathcal P$ defined by $V_W\left(x_i\right)=W_i$. The construction of the proof of Lemma \ref{extension_of_affine_map} applies verbatim to every polygon of the closure class (all its inputs — the side length bound $d_0$, the separation $d_{*}$, the graph charts with constants $\left(\frac{r_0}{2},K_0\right)$ provided by Lemma \ref{polygon compactness}, and the width $\eta=d_0/4$ of the truncation band — are uniform over the class), producing linear operators
        \[
            \mathcal E_{\mathcal P}:W\longmapsto H_{\mathcal P,W}:=\sum_{i=1}^{N}W_i\,\widehat\lambda_i^{\mathcal P},
        \]
        where $\widehat\lambda_i^{\mathcal P}\in W_c^{1,\infty}\left(\Omega\right)$ are the Lipschitz hats of that proof. These operators satisfy the following two properties.
        \begin{enumerate}
            \item For every $\mathcal P\in\overline{\mathcal A}_N$ and every $W\in\left(\mathbb R^2\right)^N$, $H_{\mathcal P,W}=\mathcal E_{\mathcal P}\left(W\right)\in W_c^{1,\infty}\left(\Omega;\mathbb R^2\right)$ satisfies $H_{\mathcal P,W}|_{\partial\mathcal P}=V_W$ and $\left\lVert H_{\mathcal P,W}\right\rVert_{W^{1,\infty}\left(\Omega\right)}\le C\sum_{i=1}^{N}\left\lvert W_i\right\rvert$.
            \item Whenever $\mathcal P_n,\mathcal P\in\overline{\mathcal A}_N$ satisfy $\mathcal P_n\to\mathcal P$ in vertex coordinates and $W^{\left(n\right)}\to W$, one can choose extensions $H_{\mathcal P_n,W^{\left(n\right)}}$ and $H_{\mathcal P,W}$ as in item 1 and global bi-Lipschitz homeomorphisms $\Psi_n=\mathrm{Id}+g_n:\Omega\to\Omega$ such that $\Psi_n\left(\mathcal P\right)=\mathcal P_n$, $\Psi_n=\mathrm{Id}$ near $\partial\Omega$, $\left\lVert g_n\right\rVert_{W^{1,\infty}}\le C\max_i\left\lvert x_i^n-x_i\right\rvert\to0$, and such that
            \[
                H_{\mathcal P_n,W^{\left(n\right)}}\circ\Psi_n\longrightarrow H_{\mathcal P,W}\qquad\text{in }W^{1,\infty}\left(\Omega\right) .
            \]
        \end{enumerate}
    \end{lemma}
    \begin{proof}
        Item 1. The hats $\widehat\lambda_i^{\mathcal P}$ of the proof of Lemma \ref{extension_of_affine_map} are obtained from the boundary hat functions $\lambda_i$ (affine on the sides adjacent to $x_i$, vanishing on the others), the uniform boundary Lipschitz constant $L_0=L_0\left(PI\right)$, and the McShane extension truncated on the band $\left\{x:\ \mathrm{dist}\left(x,\partial\mathcal P\right)<d_0/4\right\}$, which is compactly contained in $\Omega$ because $\mathrm{dist}\left(\mathcal P,\partial\Omega\right)\ge d_0$. Every input of this construction is controlled by $PI$ alone, uniformly over the whole class $\overline{\mathcal A}_N$; hence $\left\lVert\widehat\lambda_i^{\mathcal P}\right\rVert_{W^{1,\infty}\left(\Omega\right)}\le\max\left\{1,L_0\right\}$, $H_{\mathcal P,W}=\sum_iW_i\widehat\lambda_i^{\mathcal P}$ vanishes near $\partial\Omega$, and $H_{\mathcal P,W}|_{\partial\mathcal P}=\sum_iW_i\lambda_i=V_W$, together with $\left\lVert H_{\mathcal P,W}\right\rVert_{W^{1,\infty}}\le C\sum_i\left\lvert W_i\right\rvert$, $C=C\left(PI\right)$. This proves item 1.

        Item 2. Fix the limit polygon $\mathcal P$ and set $\delta_i^n:=x_i^n-x_i$, $g_n:=\mathcal E_{\mathcal P}\left(\delta^n\right)=\sum_i\delta_i^n\,\widehat\lambda_i^{\mathcal P}$ and $\Psi_n:=\mathrm{Id}+g_n$. Item 1 gives $\left\lVert g_n\right\rVert_{W^{1,\infty}}\le C\sum_i\left\lvert x_i^n-x_i\right\rvert\to0$, so for $n$ large $\Psi_n$ is a bi-Lipschitz homeomorphism of $\Omega$ onto itself, equal to the identity near $\partial\Omega$, with $\left\lVert D\Psi_n-I\right\rVert_{L^\infty}\le C\sum_i\left\lvert x_i^n-x_i\right\rvert$. On $\partial\mathcal P$ the field $g_n$ is the piecewise-affine vertex field with values $\delta_i^n$; along each side one has $\Psi_n\left(\left(1-t\right)x_i+tx_{i+1}\right)=\left(1-t\right)x_i^{\left(n\right)}+tx_{i+1}^{\left(n\right)}$, so the side is mapped affinely onto the corresponding side of $\mathcal P_n$, and $\Psi_n\left(\partial\mathcal P\right)=\partial\mathcal P_n$. Being the identity near $\partial\Omega$, $\Psi_n$ maps the component of $\Omega\setminus\partial\mathcal P$ bounded by $\partial\mathcal P$, namely $\mathcal P$, onto the component of $\Omega\setminus\partial\mathcal P_n$ bounded by $\partial\mathcal P_n$, namely $\mathcal P_n$. Take $H_{\mathcal P,W}:=\mathcal E_{\mathcal P}\left(W\right)$ and $H_{\mathcal P_n,W^{\left(n\right)}}:=\mathcal E_{\mathcal P}\left(W^{\left(n\right)}\right)\circ\Psi_n^{-1}$, always through the operator of the reference polygon $\mathcal P$. For $x=\left(1-t\right)x_i^{\left(n\right)}+tx_{i+1}^{\left(n\right)}\in\partial\mathcal P_n$,
        \[
            H_{\mathcal P_n,W^{\left(n\right)}}\left(x\right)=\mathcal E_{\mathcal P}\left(W^{\left(n\right)}\right)\left(\left(1-t\right)x_i+tx_{i+1}\right)=\left(1-t\right)W_i^{\left(n\right)}+tW_{i+1}^{\left(n\right)}=V_{W^{\left(n\right)}}\left(x\right),
        \]
        so the trace condition holds; moreover $\left\lVert H_{\mathcal P_n,W^{\left(n\right)}}\right\rVert_{W^{1,\infty}}\le2C\sum_i\left\lvert W_i^{\left(n\right)}\right\rvert$ for $n$ large, since $\left\lVert D\Psi_n^{-1}\right\rVert_{L^\infty}\le2$, so item 1 applies along the sequence with the same constant. Finally,
        \[
            H_{\mathcal P_n,W^{\left(n\right)}}\circ\Psi_n=\mathcal E_{\mathcal P}\left(W^{\left(n\right)}\right)\longrightarrow\mathcal E_{\mathcal P}\left(W\right)=H_{\mathcal P,W}\qquad\text{in }W^{1,\infty}\left(\Omega\right),
        \]
        because $\left\lVert\mathcal E_{\mathcal P}\left(W^{\left(n\right)}\right)-\mathcal E_{\mathcal P}\left(W\right)\right\rVert_{W^{1,\infty}}\le C\sum_i\left\lvert W_i^{\left(n\right)}-W_i\right\rvert\to0$ by linearity. This proves item 2.
    \end{proof}

    \begin{lemma}\label{polygon compactness}
        For each fixed $N\in\left\{3,\dots,N_0\right\}$, the closure $\overline{\mathcal A}_N$, taken in vertex coordinates, of the admissible polygons with exactly $N$ vertices is compact. Every element of $\overline{\mathcal A}_N$ is a simple polygon (so that $\mathcal P=\overline{\operatorname{int}\mathcal P}$) whose interior satisfies the graph condition of Definition \ref{def1} with the window size $\frac{r_0}{2}$ and the same slope bound $K_0$ in place of $\left(r_0,K_0\right)$; the side lengths at least $d_0$, the angle bounds $\alpha_0\le\alpha_i\le2\pi-\alpha_0$ with $\left\lvert\alpha_i-\pi\right\rvert\ge\alpha_0$, the distance $\mathrm{dist}\left(\mathcal P,\partial\Omega\right)\ge d_0$ and the separation $\mathrm{dist}\left(e_i,e_j\right)\ge d_{*}$ of non-adjacent sides hold on $\overline{\mathcal A}_N$ with the original constants.
    \end{lemma}
    \begin{proof}
        The vertex coordinates lie in $\overline\Omega^{N}$, so the closure is bounded and compact. The algebraic conditions (side lengths, angles, distance to $\partial\Omega$) are closed in vertex coordinates; the distance between two non-degenerate segments depends continuously on their endpoints, so $\mathrm{dist}\left(e_i,e_j\right)\ge d_{*}$ passes to the limit for non-adjacent sides. Consequently every element of $\overline{\mathcal A}_N$ has sides of length at least $d_0$, non-degenerate angles in $\left[\alpha_0,2\pi-\alpha_0\right]$ with $\left\lvert\alpha_i-\pi\right\rvert\ge\alpha_0$, and pairwise disjoint non-adjacent sides; adjacent sides share exactly their common endpoint, so the closed polygonal line is simple, and the limit object is a simple polygon $\mathcal P=\overline{\operatorname{int}\mathcal P}$. It remains to prove the graph condition. Take $p\in\partial\mathcal P$ and admissible $\mathcal P_n$ with exactly $N$ vertices such that $\mathcal P_n\to\mathcal P$ in vertex coordinates; choose $p_n\in\partial\mathcal P_n$ with $p_n\to p$. In the chart of Definition \ref{def1} centered at $p_n$, the boundary is the graph of $\varphi_n$ with $\varphi_n\left(0\right)=0$ and $\operatorname{Lip}\varphi_n\le K_0$ on $\left[-r_0,r_0\right]$. Passing to a subsequence, the chart rotations converge; the $\varphi_n$ are equi-Lipschitz, and $\partial\mathcal P_n\to\partial\mathcal P$ in the Hausdorff metric (the vertices converge and the sides are straight segments), so $\varphi_n\to\varphi$ uniformly on $\left[-r_0,r_0\right]$, with $\varphi\left(0\right)=0$ and $\operatorname{Lip}\varphi\le K_0$. Consider the smaller rectangle $Q':=\left[-\frac{r_0}{2},\frac{r_0}{2}\right]\times\left[-\frac{K_0r_0}{2},\frac{K_0r_0}{2}\right]$ in the limit chart: it sits inside the full chart rectangles with a uniform margin. A point of $Q'$ lying strictly above the graph of $\varphi$ stays at a fixed margin from it, hence lies in $\operatorname{int}\mathcal P_n$ for all $n$ large and passes to $\operatorname{int}\mathcal P$; a point of $Q'$ strictly below the graph stays at a fixed margin from $\overline{\mathcal P_n}$, hence is not in $\overline{\mathcal P}$; and the graph itself consists of limits of points of $\partial\mathcal P_n$, so it is contained in $\partial\mathcal P$. Therefore
        \[
            \left(\operatorname{int}\mathcal P\right)\cap Q'=\left\{\left(x_1,x_2\right)\in Q':\ x_2>\varphi\left(x_1\right)\right\},
            \qquad
            \partial\mathcal P\cap Q'=\left\{\left(x_1,\varphi\left(x_1\right)\right):\ \left\lvert x_1\right\rvert\le\frac{r_0}{2}\right\},
        \]
        which is the graph condition of Definition \ref{def1} with window size $\frac{r_0}{2}$ and slope bound $K_0$. The proof is complete.
    \end{proof}

    \emph{Step 2 (operator-norm continuity via the distributed formula).} We prove operator-norm continuity using the distributed representation of $B_{\mathcal P,k}\left[W,\kappa\right]$. After pulling the configurations back to a fixed domain, this representation allows us to combine $L^\infty$ convergence of the coefficient matrices with uniform energy estimates for the solutions. Given $\left(\mathcal P,k\right)\in\overline{\mathcal A}_N\times\mathcal K_{\mathrm{adm}}$ and $W\in\left(\mathbb R^2\right)^N$, let $H_{\mathcal P,W}$ be an extension of $V_W$ provided by Lemma \ref{continuous extension operator}, item 1, and set
    \[
        \mathcal A_{\mathcal P,W}:=\left(\mathrm{div}\,H_{\mathcal P,W}\right)I-DH_{\mathcal P,W}-\left(DH_{\mathcal P,W}\right)^T .
    \]
    Since $H_{\mathcal P,W}=V_W$ on $\partial\mathcal P$, the boundary identity of Theorem \ref{mainderivativestheorem} gives
    \begin{equation}\label{distributed B}
        \left(k-1\right)\int_{\partial\mathcal P}\left(V_W\cdot\nu\right)\nabla u^\phi\cdot\left(M\nabla u^\psi\right)\mathrm ds=\int_\Omega\gamma_{\mathcal P}^k\,\nabla u^\phi\cdot\left(\mathcal A_{\mathcal P,W}\nabla u^\psi\right)\mathrm dx ,
    \end{equation}
    Applying Theorem \ref{mainderivativestheorem} to $\varepsilon H_{\mathcal P,W}$, with $\varepsilon>0$ chosen so that $\varepsilon\left\lVert DH_{\mathcal P,W}\right\rVert_{L^\infty}\le\frac12$, and dividing by $\varepsilon$, we obtain \eqref{distributed B} for every extension $H_{\mathcal P,W}$. Therefore
    \begin{equation}\label{distributed B full}
        B_{\mathcal P,k}\left[W,\kappa\right]\left(\phi,\psi\right)=\int_\Omega\gamma_{\mathcal P}^k\,\nabla u^\phi\cdot\left(\mathcal A_{\mathcal P,W}\nabla u^\psi\right)\mathrm dx+\kappa\int_{\mathcal P}\nabla u^\phi\cdot\nabla u^\psi\,\mathrm dx ,
    \end{equation}
    which involves no gradient traces on $\partial\mathcal P$. We claim that the map $\left(\mathcal P,k,W,\kappa\right)\mapsto B_{\mathcal P,k}\left[W,\kappa\right]$ is continuous into the space of bounded bilinear forms on $H^{1/2}\left(\partial\Omega\right)\times H^{1/2}\left(\partial\Omega\right)$ equipped with the operator norm. Let $\mathcal P_n\to\mathcal P$ in vertex coordinates (same $N$), $k_n\to k$, $W^{\left(n\right)}\to W$, $\kappa_n\to\kappa$, and let $\Psi_n=\mathrm{Id}+g_n$ and the extensions $H_{\mathcal P_n,W^{\left(n\right)}}$, $H_{\mathcal P,W}$ be the compatible choices given by Lemma \ref{continuous extension operator}, item 2, so that $\Psi_n\left(\mathcal P\right)=\mathcal P_n$ (hence $\gamma_{\mathcal P_n}^{k_n}\circ\Psi_n=1+\left(k_n-1\right)\chi_{\mathcal P}$), $\Psi_n=\mathrm{Id}$ near $\partial\Omega$, $\left\lVert\Psi_n-\mathrm{Id}\right\rVert_{W^{1,\infty}}\to0$ and $H_{\mathcal P_n,W^{\left(n\right)}}\circ\Psi_n\to H_{\mathcal P,W}$ in $W^{1,\infty}\left(\Omega\right)$. Let $u_n^f$ and $u^f$ be the solutions for the conductivities $\gamma_{\mathcal P_n}^{k_n}$ and $\gamma_{\mathcal P}^k$ with the same datum $f$, and set $U_n^f:=u_n^f\circ\Psi_n$. The pull-back energy estimates of Section 4 give
    \[
        \sup_{\left\lVert f\right\rVert_{H^{1/2}}\le1}\left\lVert\nabla\left(U_n^f-u^f\right)\right\rVert_{L^2\left(\Omega\right)}\to0 .
    \]
    Pulling \eqref{distributed B full} back to the reference configuration $\mathcal P$: the transported matrices converge in $L^\infty$, since by the chain rule $\left(DH_{\mathcal P_n,W^{\left(n\right)}}\right)\circ\Psi_n=D\left(H_{\mathcal P_n,W^{\left(n\right)}}\circ\Psi_n\right)\left(D\Psi_n\right)^{-1}\to DH_{\mathcal P,W}$ and likewise for the divergence, while $k_n\to k$ and $D\Psi_n\to I$ uniformly; together with $H_{\mathcal P_n,W^{\left(n\right)}}\circ\Psi_n\to H_{\mathcal P,W}$ in $W^{1,\infty}\left(\Omega\right)$ (Lemma \ref{continuous extension operator}), a Cauchy--Schwarz computation gives
    \[
        \sup_{\left\lVert\phi\right\rVert_{H^{1/2}},\left\lVert\psi\right\rVert_{H^{1/2}}\le1}\left\lvert B_{\mathcal P_n,k_n}\left[W^{\left(n\right)},\kappa_n\right]\left(\phi,\psi\right)-B_{\mathcal P,k}\left[W,\kappa\right]\left(\phi,\psi\right)\right\rvert\to0 ,
    \]
    i.e.\ $\left\lVert B_{\mathcal P_n,k_n}\left[W^{\left(n\right)},\kappa_n\right]-B_{\mathcal P,k}\left[W,\kappa\right]\right\rVert_*\to0$, which proves the claimed continuity.

    \emph{Step 3 (compactness and conclusion).} For the compactness argument we work with the full class of polygons satisfying the prescribed prior conditions: any admissible family is a subset of that class, so the lower bound obtained below applies uniformly to every admissible family. We use the closed parameter classes: $\overline{\mathcal A}_N$, $N=3,\dots,N_0$, as in Lemma \ref{polygon compactness} (the vertex-coordinate closure of the polygons with exactly $N$ vertices in this full class), and
    \[
        \mathcal K_{\mathrm{adm}}:=\left\{k:\ \frac1{\lambda_0}\le k\le\lambda_0,\quad \frac1{\lambda_1}\le\left\lvert k-1\right\rvert\le\lambda_1\right\},
    \]
    the compact admissible set containing $\mathcal K$; in particular $k\neq1$ throughout the compact set $\mathcal K_{\mathrm{adm}}$, so the shape direction does not degenerate in the limit. The estimate obtained below then holds for the original classes as well. For each $N\in\left\{3,\dots,N_0\right\}$ consider the tangent sphere
    \[
        \mathfrak X_N=\left\{\left(\mathcal P,k,W,\kappa\right):\ \mathcal P\in\overline{\mathcal A}_N,\ k\in\mathcal K_{\mathrm{adm}},\ \sum_{i=1}^{N}\left\lvert W_i\right\rvert+\left\lvert\kappa\right\rvert=1\right\},
    \]
    and set $\mathfrak X=\bigcup_{N=3}^{N_0}\mathfrak X_N$. By Lemma \ref{polygon compactness}, $\overline{\mathcal A}_N$ is compact for each fixed $N$; the direction sphere $\left\{\left(W,\kappa\right):\sum_i\left\lvert W_i\right\rvert+\left\lvert\kappa\right\rvert=1\right\}$ and the conductivity set $\mathcal K_{\mathrm{adm}}$ are also compact, so each $\mathfrak X_N$ is compact, and so is the finite union $\mathfrak X$. By Step 2 the map $\left(\mathcal P,k,W,\kappa\right)\mapsto\left\lVert B_{\mathcal P,k}\left[W,\kappa\right]\right\rVert_*$ is continuous on $\mathfrak X$, and by Step 1 it is strictly positive at every point of $\mathfrak X$ (if $B_{\mathcal P,k}\left[W,\kappa\right]\equiv0$ then $W=0$ and $\kappa=0$, contradicting the normalization). Therefore
    \[
        m_1:=\inf_{\mathfrak X}\left\lVert B_{\mathcal P,k}\left[W,\kappa\right]\right\rVert_*>0,
    \]
    and $m_1$ depends only on the prior information $PI$. For any $\left(\mathcal P,k\right)\in\mathcal A\times\mathcal K$ and any $\left(W,\kappa\right)$ with $q:=\sum_i\left\lvert W_i\right\rvert+\left\lvert\kappa\right\rvert>0$, the normalized direction $\left(W/q,\kappa/q\right)$ belongs to $\mathfrak X$, and by linearity of $B$ in $\left(W,\kappa\right)$,
    \[
        \left\lVert B_{\mathcal P,k}\left[W,\kappa\right]\right\rVert_*=q\left\lVert B_{\mathcal P,k}\left[\frac{W}{q},\frac{\kappa}{q}\right]\right\rVert_*\ge m_1 q ,
    \]
    which is \eqref{lowerbound operator version}. For \eqref{lowerbound}, by the definition of the operator norm there exist $\tilde\phi,\tilde\psi\in H^{1/2}\left(\partial\Omega\right)$ with
    \[
        \left\lvert B_{\mathcal P,k}\left[W,\kappa\right]\left(\tilde\phi,\tilde\psi\right)\right\rvert\ge\frac{m_1}{2}q\left\lVert\tilde\phi\right\rVert_{H^{1/2}\left(\partial\Omega\right)}\left\lVert\tilde\psi\right\rVert_{H^{1/2}\left(\partial\Omega\right)},
    \]
    and since, for $\mathcal P=\mathcal P_1$, $k=k_1$, $W_i=x_2^i-x_1^i$ and $\kappa=k_2-k_1$, we have $B_{\mathcal P,k}\left[W,\kappa\right]\left(\tilde\phi,\tilde\psi\right)=\frac{d}{dt}F\left(t,\tilde\phi,\tilde\psi\right)\big|_{t=0}$, this is \eqref{lowerbound}. The proof is complete.
  
  \end{proof}
  
\section{Lipschitz stability result}
    In this section, we prove our main result Theorem \ref{mainthm} 
    \begin{proof}
        Let $\rho=\lVert\Lambda^{\mathcal{P}_{1}}_{k_{1}}-\Lambda^{\mathcal{P}_{2}}_{k_{2}}\rVert_{*}$. If $\rho=0$, uniqueness for the two-dimensional conductivity problem (\cite{astala2006calderon}) gives $\gamma^{\mathcal P_1}_{k_1}=\gamma^{\mathcal P_2}_{k_2}$ a.e.\ in $\Omega$, and the conclusion is immediate. We henceforth assume $0<\rho$.

        We distinguish two cases: (a) $\rho>\delta_{0}$ and (b) $\rho\leq\delta_{0}$, where $0<\delta_{0}<\min\left\{\delta,\rho_*\right\}$, with $\delta$ the constant of Theorem \ref{final_same_number_result} and $\rho_*$ that of Lemma \ref{Logarithmic_stability_estimate}, will be chosen below. In case (a), the uniform bounds on the conductivities give
        \[
            \lVert\gamma_{k_{1}}^{\mathcal{P}_{1}}-\gamma_{k_{2}}^{\mathcal{P}_{2}}\rVert_{L^{1}\left(\Omega\right)}\le\left(\lambda_{1}+\lambda_{0}-\frac1{\lambda_0}\right)\lvert\Omega\rvert\le C\frac{\rho}{\delta_{0}}\le C\rho ,
        \]
        since $\rho>\delta_0$.

        Assume from now on $0<\rho\le\delta_{0}$. By Theorem \ref{final_same_number_result} (valid because $\delta_0<\delta$ by the choice below), $\mathcal P_1$ and $\mathcal P_2$ have the same number $N$ of vertices, and we denote the corresponding vertices by $x_1^i$ and $x_2^i$, $i=1,\dots,N$. Set
        \[
            W_i=x_2^i-x_1^i,\qquad v=\sum_{i=1}^{N}\left\lvert W_i\right\rvert,\qquad \kappa=k_2-k_1,\qquad q=v+\left\lvert\kappa\right\rvert .
        \]
        By Lemma \ref{maindistancelemma}, Lemma \ref{first_distance_estimate} and Lemma \ref{same_number_result},
        \begin{equation}\label{rough v smallness}
            v\le C\left\lvert\log\rho\right\rvert^{-\alpha} .
        \end{equation}
        To control $\kappa$, we use the logarithmic estimate of Lemma \ref{Logarithmic_stability_estimate}: since $\gamma_{k_1}^{\mathcal P_1}-\gamma_{k_2}^{\mathcal P_2}=k_1-k_2=-\kappa$ on $\mathcal P_1\cap\mathcal P_2$,
        \begin{equation}\label{kappa lower bound}
            \left\lvert\kappa\right\rvert^{2}\left\lvert\mathcal P_1\cap\mathcal P_2\right\rvert\le\left\lVert\gamma_{k_{1}}^{\mathcal{P}_{1}}-\gamma_{k_{2}}^{\mathcal{P}_{2}}\right\rVert_{L^{2}\left(\Omega\right)}^{2} .
        \end{equation}
        Moreover, by Lemma \ref{area lower bound} and \eqref{estimate_P1triangleP2},
        \[
            \left\lvert\mathcal P_1\cap\mathcal P_2\right\rvert=\left\lvert\mathcal P_1\right\rvert-\left\lvert\mathcal P_1\setminus\mathcal P_2\right\rvert\ge a_*-\left\lvert\mathcal P_1\triangle\mathcal P_2\right\rvert\ge\frac{a_*}{2}
        \]
        for $\rho$ small enough, since $\left\lvert\mathcal P_1\triangle\mathcal P_2\right\rvert\le C\left\lvert\log\rho\right\rvert^{-2\alpha}\to0$. Hence
        \begin{equation}\label{rough kappa smallness}
            \left\lvert\kappa\right\rvert\le C\left\lVert\gamma_{k_{1}}^{\mathcal{P}_{1}}-\gamma_{k_{2}}^{\mathcal{P}_{2}}\right\rVert_{L^{2}\left(\Omega\right)}\le C\left\lvert\log\rho\right\rvert^{-\alpha} ,
        \end{equation}
        and altogether
        \begin{equation}\label{rough q smallness}
            q=v+\left\lvert\kappa\right\rvert\le C\left\lvert\log\rho\right\rvert^{-\alpha}\to0\qquad\left(\rho\to0\right) .
        \end{equation}

        We now choose the small-data threshold $\delta_0>0$ with $\delta_0<\min\left\{\delta,\rho_*\right\}$. Let $C_E$ be the constant of Lemma \ref{extension_of_affine_map}, so that the extension $h$ of the vertex displacement $W_i=x_2^i-x_1^i$ satisfies $\left\lVert h\right\rVert_{W^{1,\infty}}\le C_Ev\le C_Eq$, and set $C_R:=\left(C_E+1\right)^{2}C$, where $C$ is the constant of \eqref{continuity_of_derivatives}; the continuity estimate used below then reads $\left\lvert\frac{dF}{dt}\left(t\right)-\frac{dF}{dt}\left(0\right)\right\rvert\le C_R\,t\,q^{2}\left\lVert\tilde\phi\right\rVert_{H^{1/2}}\left\lVert\tilde\psi\right\rVert_{H^{1/2}}$. The constant $m_1>0$ of Theorem \ref{lowerbound result} is fixed before $\delta_0$ is chosen. The choice is made in two stages: first take $\delta_0$ small enough that, for $\rho\le\delta_0$,
        \[
            \left\lvert\mathcal P_1\triangle\mathcal P_2\right\rvert\le\frac{a_*}{2},
        \]
        which is the geometric threshold (it gives $\left\lvert\mathcal P_1\cap\mathcal P_2\right\rvert\ge a_*/2$ by Lemma \ref{area lower bound} and thereby validates \eqref{rough kappa smallness} in the regime $\rho\le\delta_0$); then shrink $\delta_0$ further, if necessary, so that additionally
        \[
            C_E\,q\le\frac12,\qquad C_R\,q\le\frac{m_1}{2}.
        \]
        Both stages are feasible: for $\rho<\rho_*$ the quantities $q\le C_0\left\lvert\log\rho\right\rvert^{-\alpha}$ and $\left\lvert\mathcal P_1\triangle\mathcal P_2\right\rvert\le C\left\lvert\log\rho\right\rvert^{-2\alpha}$ tend to $0$, with all constants already fixed. The condition $C_Eq\le1/2$ gives $\left\lVert Dh\right\rVert_{L^\infty}\le\frac12$. The bound $\left\lVert Dh\right\rVert_{L^\infty}\le1/2$ and the ellipticity of $k(t)$ ensure that Theorem \ref{mainderivativestheorem} applies along the entire path. Moreover $h$ is affine on each side of $\partial\mathcal P_1$ with the values $W_i$ at the vertices, so $\Phi_{h1}$ maps each side of $\mathcal P_1$ onto the corresponding side of $\mathcal P_2$ and, being the identity near $\partial\Omega$, $\Phi_{h1}\left(\mathcal P_1\right)=\mathcal P_2$; hence $F\left(1,\tilde\phi,\tilde\psi\right)=\left\langle\Lambda_{k_2}^{\mathcal P_2}\tilde\phi,\tilde\psi\right\rangle$ and $F\left(0,\tilde\phi,\tilde\psi\right)=\left\langle\Lambda_{k_1}^{\mathcal P_1}\tilde\phi,\tilde\psi\right\rangle$. The condition
        \begin{equation}\label{smallness of q}
            q\le\frac{m_1}{2C_R},
        \end{equation}
        makes the quadratic term in the closure below absorbable.

        By Theorem \ref{lowerbound result} there exist $\tilde\phi,\tilde\psi\in H^{1/2}\left(\partial\Omega\right)$ such that
        \[
            \left|\frac{dF}{dt}\left(t,\tilde\phi,\tilde\psi\right)\Big|_{t=0}\right|\ge\frac{m_1}{2}q\left\lVert\tilde\phi\right\rVert_{H^{1/2}}\left\lVert\tilde\psi\right\rVert_{H^{1/2}} ,
        \]
        and by Theorem \ref{mainderivativestheorem}, for every $t\in\left[0,1\right]$,
        \[
            \left|\frac{dF}{dt}\left(t,\tilde\phi,\tilde\psi\right)-\frac{dF}{dt}\left(t,\tilde\phi,\tilde\psi\right)\Big|_{t=0}\right|\le Ct\left(\left\lVert h\right\rVert_{W^{1,\infty}}+\left\lvert\kappa\right\rvert\right)^{2}\left\lVert\tilde\phi\right\rVert_{H^{1/2}}\left\lVert\tilde\psi\right\rVert_{H^{1/2}}\le C_R\,t\,q^{2}\left\lVert\tilde\phi\right\rVert_{H^{1/2}}\left\lVert\tilde\psi\right\rVert_{H^{1/2}} ,
        \]
        since $\left\lVert h\right\rVert_{W^{1,\infty}}+\left\lvert\kappa\right\rvert\le C_Ev+\left\lvert\kappa\right\rvert\le\left(C_E+1\right)q$ by the bound of Lemma \ref{extension_of_affine_map}. Therefore
        \begin{equation}\label{closure}
            \begin{aligned}
                \rho\left\lVert\tilde\phi\right\rVert_{H^{1/2}}\left\lVert\tilde\psi\right\rVert_{H^{1/2}}
                &\ge\left\lvert\left\langle\left(\Lambda^{\mathcal{P}_{1}}_{k_{1}}-\Lambda^{\mathcal{P}_{2}}_{k_{2}}\right)\tilde{\phi},\tilde{\psi}\right\rangle\right\rvert
                =\left\lvert F\left(1,\tilde{\phi},\tilde{\psi}\right)-F\left(0,\tilde{\phi},\tilde{\psi}\right)\right\rvert\\
                &\ge\left\lvert\frac{dF}{dt}\Big|_{t=0}\right\rvert-\int_{0}^{1}\left\lvert\frac{dF}{dt}\left(t\right)-\frac{dF}{dt}\Big|_{t=0}\right\rvert\mathrm dt\\
                &\ge\left(\frac{m_1}{2}q-\frac{C_R}{2}q^{2}\right)\left\lVert\tilde\phi\right\rVert_{H^{1/2}}\left\lVert\tilde\psi\right\rVert_{H^{1/2}}
                \ge\frac{m_1}{4}q\left\lVert\tilde\phi\right\rVert_{H^{1/2}}\left\lVert\tilde\psi\right\rVert_{H^{1/2}} ,
            \end{aligned}
        \end{equation}
        where the last inequality follows from \eqref{smallness of q}. Hence
        \begin{equation}\label{q lipschitz}
            q=v+\left\lvert\kappa\right\rvert\le\frac{4}{m_1}\rho .
        \end{equation}

        Since $\mathcal P_2=\Phi_{h1}\left(\mathcal P_1\right)$ and $\left\lVert h\right\rVert_{L^{\infty}}\le Cv$, every point of the symmetric difference lies within distance $Cv$ of $\partial\mathcal P_1$, and the tubular-neighborhood estimate (using $\mathcal H^{1}\left(\partial\mathcal P_1\right)\le N_0L$ and the uniform Lipschitz character of $\partial\mathcal P_1$) gives
        \begin{equation}\label{triangle estimate}
            \left\lvert\mathcal P_1\triangle\mathcal P_2\right\rvert\le Cv .
        \end{equation}
        Finally,
        \begin{equation}
            \begin{aligned}
                \lVert\gamma_{k_{1}}^{\mathcal{P}_{1}}-\gamma_{k_{2}}^{\mathcal{P}_{2}}\rVert_{L^{1}\left(\Omega\right)}
                &=\left\lvert k_1-1\right\rvert\left\lvert\mathcal P_1\setminus\mathcal P_2\right\rvert+\left\lvert k_2-1\right\rvert\left\lvert\mathcal P_2\setminus\mathcal P_1\right\rvert+\left\lvert\kappa\right\rvert\left\lvert\mathcal P_1\cap\mathcal P_2\right\rvert\\
                &\le C\left\lvert\mathcal P_1\triangle\mathcal P_2\right\rvert+C\left\lvert\kappa\right\rvert
                \le C\left(v+\left\lvert\kappa\right\rvert\right)\le C\rho ,
            \end{aligned}
        \end{equation}
        by \eqref{triangle estimate} and \eqref{q lipschitz}, which is the desired estimate.
    \end{proof}

    The preceding proof also yields Lipschitz stability for the boundary of the inclusion and for its conductivity value, separately.

    \begin{corollary}\label{joint corollary}
        Under the assumptions of Theorem \ref{mainthm} there exists a constant $C>0$, depending only on the prior information, such that for any $\mathcal P_1,\mathcal P_2\in\mathcal A$ and $k_1,k_2\in\mathcal K$,
        \[
            \mathrm{d}_H\left(\partial\mathcal P_1,\partial\mathcal P_2\right)+\left\lvert k_1-k_2\right\rvert\le C\left\lVert\Lambda_{k_1}^{\mathcal P_1}-\Lambda_{k_2}^{\mathcal P_2}\right\rVert_* .
        \]
        When the two Dirichlet-to-Neumann maps are sufficiently close, the polygons have the same number of vertices, and with the corresponding labeling
        \[
            \sum_{i=1}^{N}\left\lvert x_1^i-x_2^i\right\rvert+\left\lvert k_1-k_2\right\rvert\le C\left\lVert\Lambda_{k_1}^{\mathcal P_1}-\Lambda_{k_2}^{\mathcal P_2}\right\rVert_* .
        \]
    \end{corollary}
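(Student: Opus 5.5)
The plan is to reuse the case split from the proof of Theorem \ref{mainthm}, with the same threshold $\delta_0<\min\left\{\delta,\rho_*\right\}$, where $\rho=\left\lVert\Lambda_{k_1}^{\mathcal P_1}-\Lambda_{k_2}^{\mathcal P_2}\right\rVert_*$. If $\rho=0$, uniqueness (\cite{astala2006calderon}) gives $\mathcal P_1=\mathcal P_2$ and $k_1=k_2$, and there is nothing to prove. If $\rho>\delta_0$, we use trivial bounds. Both boundaries lie in $\overline\Omega$, so $\mathrm d_H\left(\partial\mathcal P_1,\partial\mathcal P_2\right)\le L$. Both constants lie in $\left(\lambda_0^{-1},\lambda_0\right)$, so $\left\lvert k_1-k_2\right\rvert\le2\lambda_0$. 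Hence the left-hand side is at most $\left(L+2\lambda_0\right)\rho/\delta_0\le C\rho$. The second inequality of the corollary is only claimed in the regime where the maps are close, and we interpret that as $\rho\le\delta_0$.

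In the regime $0<\rho\le\delta_0$, Theorem \ref{final_same_number_result} says the two polygons have the same number $N$ of vertices, labeled as in Lemma \ref{same_number_result}. The proof of Theorem \ref{mainthm} already gives \eqref{q lipschitz}, namely $\sum_i\left\lvert x_2^i-x_1^i\right\rvert+\left\lvert k_2-k_1\right\rvert\le\frac{4}{m_1}\rho$. This is exactly the second displayed inequality. All the work (the derivative formula, the lower bound $m_1$, and absorbing the quadratic remainder) has been done there, so I would simply cite that chain.

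For the Hausdorff part, I would use the map $\Phi_{h1}=\mathrm{Id}+h$ from Lemma \ref{extension_of_affine_map}. It sends $\partial\mathcal P_1$ onto $\partial\mathcal P_2$, and it satisfies $\left\lVert h\right\rVert_{L^\infty}\le C\max_i\left\lvert x_1^i-x_2^i\right\rvert$. Every point $p\in\partial\mathcal P_1$ therefore lies within $\lvert h(p)\rvert\le Cv$ of the point $\Phi_{h1}(p)\in\partial\mathcal P_2$. Conversely, every point of $\partial\mathcal P_2$ is $\Phi_{h1}(p)$ for some $p\in\partial\mathcal P_1$. This gives $\mathrm d_H\left(\partial\mathcal P_1,\partial\mathcal P_2\right)\le\left\lVert h\right\rVert_{L^\infty}\le Cv$.

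A more elementary route avoids $h$. On each side, the point $(1-t)x_1^i+tx_1^{i+1}$ is within $\max\left(\left\lvert W_i\right\rvert,\left\lvert W_{i+1}\right\rvert\right)$ of the point $(1-t)x_2^i+tx_2^{i+1}$. Combined with $v\le\frac{4}{m_1}\rho$, either route gives the first inequality.

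I expect no real obstacle here. The only delicate point is making sure the second inequality is stated only under the smallness assumption, where the vertex labeling is meaningful. Everything else is bookkeeping of constants that depend only on $PI$.
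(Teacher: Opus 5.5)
Your proposal is correct and follows the paper's proof essentially step for step. It uses the same three-way split: $\rho=0$ by uniqueness, $\rho>\delta_0$ by trivial bounds divided by $\delta_0$, and $0<\rho\le\delta_0$ by citing \eqref{q lipschitz}. Your ``elementary route'' via the side-by-side affine correspondence is exactly the paper's argument for $\mathrm{d}_H\left(\partial\mathcal P_1,\partial\mathcal P_2\right)\le\max_i\left\lvert x_1^i-x_2^i\right\rvert$. Your route through $h$ is the same estimate in disguise: $h|_{\partial\mathcal P_1}=V$, so the $W^{1,\infty}$ bound of Lemma \ref{extension_of_affine_map} is not needed there.
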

    \begin{proof}
        Write $\rho:=\left\lVert\Lambda_{k_1}^{\mathcal P_1}-\Lambda_{k_2}^{\mathcal P_2}\right\rVert_*$, and let $\delta_0$ be the threshold chosen in the proof of Theorem \ref{mainthm}. If $\rho=0$, uniqueness (\cite{astala2006calderon}) identifies the two conductivities and hence the two configurations. If $\rho>\delta_0$, then
        \[
            \mathrm{d}_H\left(\partial\mathcal P_1,\partial\mathcal P_2\right)+\left\lvert k_1-k_2\right\rvert
            \le L+\lambda_0-\lambda_0^{-1}
            \le\frac{L+\lambda_0-\lambda_0^{-1}}{\delta_0}\,\rho ,
        \]
        which proves the first estimate in this regime. If $0<\rho\le\delta_0$, the polygons have the same number of vertices, and \eqref{q lipschitz} gives
        \[
            q:=\sum_{i=1}^{N}\left\lvert x_1^i-x_2^i\right\rvert+\left\lvert k_1-k_2\right\rvert
            \le\frac{4}{m_1}\rho .
        \]
        The affine correspondence between the sides implies
        \[
            \mathrm{d}_H\left(\partial\mathcal P_1,\partial\mathcal P_2\right)\le\max_i\left\lvert x_1^i-x_2^i\right\rvert
        \]
        (each point of a side of $\mathcal P_2$ is the image of the point of the corresponding side of $\mathcal P_1$ with the same parameter, and the correspondence is invertible), hence
        \[
            \mathrm{d}_H\left(\partial\mathcal P_1,\partial\mathcal P_2\right)+\left\lvert k_1-k_2\right\rvert
            \le q\le\frac{4}{m_1}\rho .
        \]
        This proves both assertions.
    \end{proof}

\bibliographystyle{plain}
\bibliography{references}
\end{document}